\documentclass{article}

\usepackage{amsmath}
\usepackage{amsthm}
\usepackage{amssymb}
\usepackage{bm}
\usepackage{graphicx} 
\usepackage{booktabs} 
\usepackage{amsfonts, xfrac}
\usepackage[font=scriptsize]{subcaption}
\usepackage{multirow}
\usepackage[inline]{enumitem}
\usepackage{hyperref}
\usepackage{float}
\usepackage{systeme}
\usepackage{xcolor}
\usepackage{kbordermatrix}
\usepackage{algorithm}
\usepackage[noend]{algorithmic}
\usepackage[round, authoryear, sort]{natbib}

\usepackage[a4paper, total={6in, 8in}]{geometry}

\newtheorem{theorem}{Theorem}

\newtheorem{corollary}{Corollary}
\newtheorem{lemma}{Lemma}

\newtheorem{example}{Example}
\newtheorem{remark}{Remark}

\newcommand{\Z}{\mathbb{Z}}

\newcommand{\R}{\mathbb{R}}

\newcommand{\Q}{\mathbb{Q}}

\newcommand{\F}{\mathcal{F}}
\newcommand{\M}{\mathcal{M}}
\renewcommand{\P}{\mathcal{P}}

\renewcommand{\S}{\mathcal{S}}
\renewcommand{\L}{\mathcal{L}}

\renewcommand{\kbldelim}{(}
\renewcommand{\kbrdelim}{)}

\newcommand{\ie} {{\em i.e.\/}, }
\newcommand{\eg} {{\em e.g.\/}, }

\newcommand{\ceil}[1]{\lceil #1 \rceil}
\newcommand{\floor}[1]{\lfloor #1 \rfloor}

\title{A Hereditary Property of Cutting Plane Procedures\vspace{-0em}}
\author{G\'erard Cornu\'ejols, Vrishabh Patil}

\begin{document}

\maketitle

\begin{abstract}
     \noindent Let $K'$ denote the closure of a convex set $K$ under a given cutting-plane procedure. The procedure satisfies the \emph{hereditary property} if $F' = K' \cap F$ for every face $F$ of $K$. The property underlies inductive proofs of finite-rank and the polyhedrality of closures, and it is an admissibility requirement in abstract frameworks for cutting-plane procedures. Yet, it has not been studied systematically across well-known cutting-plane procedures. We establish two sufficient conditions for the property. The first applies to procedures that can be expressed as closures under intersection cuts from a family of lattice-free convex sets, when a single family realizes the closure of $K$ and of each of its faces. It yields the hereditary property for the split, lift-and-project, Lov\'asz--Schrijver, Sherali--Adams, and Lasserre closures, applied over general convex sets and for faces that need not be exposed. The second applies to procedures whose cuts are valid inequalities for Gomory's corner polyhedron, 
     and it yields the hereditary property for the closure of Dantzig cuts derived from all bases. We complement these results with four classical procedures for which the hereditary property fails, namely the closure of Gomory's fractional cuts (derived from either all bases or feasible bases only), the mixed-integer Chv\'atal closure, the $+$-cut closure, and the closure of Dantzig cuts derived from feasible bases. 
\end{abstract}

\noindent\textbf{Keywords}: cutting planes; hereditary property; intersection cuts; lattice-free convex sets; corner polyhedron; integer programming

\section{Introduction}\label{sec:intro}

Cutting-plane procedures are among the most fundamental tools in integer and mixed-integer optimization. They provide a systematic mechanism for strengthening linear relaxations by iteratively adding linear inequalities that separate fractional solutions while preserving all integer feasible points. The resulting sequence of relaxations, or \emph{closures}, has proven central both to theoretical analyses and to the design of modern mixed-integer linear programming algorithms. In practice, cutting-plane methods complement enumeration-based algorithms such as branch-and-bound.

Despite the diversity among cutting plane procedures (Chv\'atal cuts, split cuts, lift‐and‐project cuts, hierarchies such as Lov\'asz‐Schrijver, Sherali–Adams, Lasserre, among others), one unifying feature that recurs in many finite‐rank and admissibility arguments is the following \emph{hereditary property}. Let $K \subseteq \R^n$ be a convex set, and let $K' \subseteq K$ denote the closure of $K$ under a given cutting plane procedure — that is, the convex set obtained by intersecting $K$ with all the linear inequalities derived by the procedure. In this paper, we assume that, if a cutting plane procedure can be applied to a convex set $K$, it can also be applied to all its faces. Recall that a convex subset $F \subseteq K$ is a \emph{face} of $K$ if $\lambda u + (1-\lambda) v \in F$ with $u, v \in K$ and $\lambda \in (0,1)$ implies $u, v \in F$. Every face $F$ of a convex set $K$ is also a convex set (see, e.g., \cite{rockafellar1970convex}, \cite{hiriart2004fundamentals}). A face $F$ is \emph{exposed} if there is an inequality $c x \leq \delta$, valid for $K$, such that $F = K \cap \{x : c x = \delta\}$. The faces of a polyhedron are always exposed, but this is not true in general for convex sets. A cutting plane procedure is said to satisfy the {\it hereditary property} if, for every face $F$ of $K$ (not necessarily exposed), the closure of $F$ under the same procedure satisfies
$$F' = K' \cap F.$$
 \cite{pokutta2010rank} use the term \emph{homogeneity} for the restricted property $(F \cap K)' = K' \cap F$ where $F$ is a face of the unit hypercube $[0,1]^n$ and $K \subseteq [0,1]^n$, while \cite{dey2014design} use \emph{strong homogeneity} for the more general property considered in this paper. We prefer the term \emph{hereditary property}, in reference to the face lattice of $K$ ordered by inclusion.

\subsection{Literature Review} \label{subsec:lit-review}

The hereditary property plays a pivotal role in several aspects of cutting-plane theory (see for example \cite{schrijver1986theory}, \cite{cook1990chvatal}). It underlies inductive proofs of finite-rank results, which decompose the analysis across faces of decreasing dimension, as in classical arguments bounding the rank and establishing polyhedrality of the Chv\'atal closure (\cite{chvatal1973edmonds}, \cite{dunkel2013gomory}, \cite{dadush2014chvatal}). It is also an essential condition for admissibility in abstract frameworks for closure procedures (\cite{pokutta2010rank}, \cite{dey2014design}).

Despite its fundamental nature, the hereditary property has not been systematically studied across the broad spectrum of cutting-plane procedures that have appeared in the literature. While the property has been proven to hold for some classical closures, such as the Chv\'atal closure and the Lov\'asz--Schrijver relaxation (\cite{dash2001matrix}), it remains formally unclear for other well-known cuts, such as split cuts and Dantzig cuts, and hierarchies, including the lift-and-project procedures of Balas-Ceria-Cornu\'ejols, Sherali–Adams, and Lasserre.
Moreover, counterexamples for other procedures are seldom discussed.
This gap motivates a systematic investigation of the hereditary property as a unifying concept in cutting-plane theory.

For the Chv\'atal closure (\cite{gomory1958outline}, \cite{chvatal1973edmonds}), the hereditary property is proven within the classical finite-rank analysis of \cite{schrijver1980cutting} and \cite{cook1990chvatal}. \cite{dunkel2013gomory} extended the hereditary property to irrational polytopes, and \cite{dadush2014chvatal} further extended it to arbitrary compact convex sets, with a short alternative proof given by \cite{braun2014short}.

Beyond the Chv\'atal framework, \cite{cook1990chvatal} introduced the \emph{split closure}, derived from disjunctions $\{x \in \R^n: \pi_0 \leq \pi x \leq \pi_0 + 1\}$, and viewed the Chv\'atal closure as a type of \emph{split-and-round} closure for which they showed the hereditary property. Their argument is restricted to the pure integer setting and does not naturally extend to the mixed-integer case. This property has not been formally proven for the classical split closure.

The hereditary property has been established for the Lov\'asz--Schrijver operator on rational polytopes (\cite{dash2001matrix}). Whether this property also holds for the lift-and-project closure (\cite{balas1993lift}), the Sherali--Adams hierarchy (\cite{sherali1990hierarchy}), and the Lasserre hierarchy (\cite{lasserre2001explicit}) has not been explicitly analyzed.

Our analysis builds on the intersection-cut framework of \cite{balas1971intersection}. \cite{averkov2012finitely} recasts intersection cuts in terms of $L$-reductions and closure operators, the formulation we adopt in this work. For procedures that are intrinsically tied to a polyhedral description, we rely instead on Gomory's corner polyhedron (\cite{gomory1969some}). The hereditary property has not been studied in either setting.

\subsection{Our Contributions and Outline} \label{subsec:contributions-outline}

This paper makes three contributions. The first, Theorem~\ref{thm:L-closure-hered}, establishes the hereditary property for any closure operator that can be expressed as an \emph{$\L$-closure}, provided the same family $\L$ of lattice-free convex sets realizes the closure of the underlying convex set $K$ and of each of its faces. This is precisely the setting of intersection cuts in the sense of \cite{balas1971intersection} and \cite{averkov2012finitely}, and we show that these cuts cover the split, lift-and-project, Lov\'asz--Schrijver, Sherali--Adams, and Lasserre procedures. The second contribution is a collection of counterexamples showing that the hereditary property fails for four classical procedures, namely the closure of Gomory fractional cuts, derived from either all bases or feasible bases only, the mixed-integer Chv\'atal closure, the $+$-cut closure of \cite{pokutta2010rank}, and the closure of Dantzig cuts derived from feasible bases. The failure for the Gomory fractional closure is perhaps surprising, as these cuts underlie a cutting-plane algorithm for pure integer programming that guarantees finite convergence, and the closure they define is a relaxation of the Chv\'atal closure, which does satisfy the hereditary property. The third contribution, Theorem~\ref{thm:corner-cut-hered}, is a sufficient condition for the hereditary property of closures built from valid inequalities for Gomory's corner polyhedron, where the cuts are indexed by the bases of the constraint matrix, so that no family of lattice-free convex sets common to the polyhedron and its faces is available. This condition establishes the hereditary property for the closure of Dantzig cuts derived from all bases, and it sheds light on the failures above, as the closures of Gomory fractional cuts and of Dantzig cuts derived from feasible bases satisfy one of its two requirements but not the other.

The bulk of prior work on the hereditary property for cutting-plane procedures is restricted to the rational polyhedral setting, with \cite{dunkel2013gomory} and \cite{dadush2014chvatal} being notable exceptions that treat irrational polytopes and compact convex sets, respectively, but only for the Chv\'atal closure. Moreover, the existing literature uniformly works with \emph{exposed} faces. While every face of a polyhedron is exposed, this need not be the case for general  convex sets. In contrast, our results address general convex sets and the broader notion of a face that need not be exposed. To the best of our knowledge, both extensions are new for the procedures considered in this work. We do, however, restrict to the polyhedral setting where the underlying structure asks it, as with Theorem~\ref{thm:corner-cut-hered} and the Sherali--Adams and Lasserre hierarchies.

The remainder of the paper is organized as follows. In Section~\ref{sec:intersection-cuts} we prove Theorem~\ref{thm:L-closure-hered} on the hereditary property for $\L$-closures. Section~\ref{sec:L-closure-applications} applies this result to derive the hereditary property for the split, lift-and-project, Lov\'asz--Schrijver, Sherali--Adams, and Lasserre closures, showing in each case that the procedure can be expressed as an $\L$-closure for a single family $\L$ shared by the underlying set and its faces. Section~\ref{sec:non-hered} presents four classical procedures for which the hereditary property fails. Finally, in Section~\ref{sec:corner-polyhedron} we prove Theorem~\ref{thm:corner-cut-hered}, a compatibility criterion for closures built from valid inequalities for Gomory's corner polyhedron, and apply it to establish the hereditary property for the closure of Dantzig cuts derived from all bases.

\subsection{Notation} \label{subsec:notation}

Throughout, $K \subseteq \R^n$ denotes a convex set. We let the mixed-integer feasible region be
\begin{align}
    S := \{x \in K : x_j \in \Z,\ j \in I\},
\end{align}
where $I \subseteq \{1,\ldots,n\}$ indexes the integer variables and $C := \{1,\ldots,n\} \setminus I$ the continuous ones. We write $\operatorname{int}(\cdot)$ to denote the interior of a set.

\section{Intersection Cuts and the \texorpdfstring{$\L$}{L}-Closure}
\label{sec:intersection-cuts}

We begin by introducing a geometric framework that encompasses a broad class of cutting-plane procedures. This abstraction is naturally captured by the notion of \emph{intersection cuts} (\cite{balas1971intersection}, \cite{averkov2011finite}), and yields our first main result.

\subsection{The \texorpdfstring{$\L$}{L}-Closure} \label{subsec:L-closure-defn}

Let $\mathcal{L}$ be a family of $n$-dimensional convex sets in $\R^n$, and let $K \subseteq \R^n$ be a convex set. Following \cite{averkov2012finitely}, define
\begin{align}
    R_L(K) &:= \operatorname{conv}(K \setminus \operatorname{int}(L)) \quad \text{for } L \in \L, \notag \\
    R_\L(K) &:= \bigcap_{L \in \L} R_L(K). \notag 
\end{align}
We refer to $R_L(K)$ as the \emph{$L$-reduction} of $K$, and to $R_{\mathcal{L}}(K)$ as its \emph{$\L$-closure}. 
Note that $R_L(K)$ is not a closed set in general, even when $K$ is a closed convex set and $L$ is a closed full-dimensional convex set (see \cite{averkov2011finite}, Figure 5). However, $R_L(K)$ is closed whenever $K$ is closed and $L$ is either a halfspace or $\operatorname{rec}(L)$ is a linear space (\cite{averkov2011finite}, Theorem 3.2). In particular, this holds when $L$ is a split. In such cases, $R_\mathcal{L}(K)$ is a closed convex set because it is the intersection of closed convex sets.

Following the notation of Section~\ref{subsec:notation} we consider the mixed-integer feasible region   $S = K \cap (\Z^{|I|} \times \R^{|C|})$. A set $L \subseteq \R^n$ is said to be \emph{lattice-free} if
\begin{align}
    \operatorname{int}(L) \cap (\Z^{|I|} \times \R^{|C|}) = \emptyset.
\end{align}
In the pure-integer case $C = \emptyset$ this is the classical integer-free condition $\operatorname{int}(L) \cap \Z^n = \emptyset$. If $L$ is lattice-free, then since $R_L(K) \subseteq K$ and $K \setminus \operatorname{int}(L) \subseteq R_L(K)$,
\begin{align}
    R_L(K) \cap (\Z^{|I|} \times \R^{|C|}) = K \cap (\Z^{|I|} \times \R^{|C|}) = S,
\end{align}
so any halfspace $H^+$ with $R_L(K) \subseteq H^+$ defines an inequality valid for $S$. If in addition $K \nsubseteq H^+$, the boundary of $H^+$ separates some point of $K$ from $R_L(K)$, and the resulting inequality is called an \emph{intersection cut} associated with $L$.

\subsection{The Hereditary Property for \texorpdfstring{$\L$}{L}-Closures} \label{subsec:L-closure-hered}

We now turn to the central result of this section, which states that any $\L$-closure satisfies the hereditary property, provided the same family $\L$ is used for a convex set and its faces. We begin with a basic and well-known property of faces of convex sets (see, \eg \cite{rockafellar1970convex}, Section 18). We include a short proof for completeness.

\begin{lemma} \label{lemma:face-decomposition}
    Let $F$ be a face of a convex set $K$. If $z = \sum_{i=1}^{r} \lambda_i p_i$ for some integer $r \geq 1$, where $p_i \in K$, $\lambda_i > 0$, $\sum_{i=1}^{r} \lambda_i = 1$, and $z \in F$, then $p_i \in F$ for all $i = 1,\ldots,r$.
\end{lemma}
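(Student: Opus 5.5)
I will argue by induction on $r$, reducing each step to the two-point definition of a face. For $r=1$ we have $\lambda_1=1$ and $z=p_1$, so $p_1\in F$ trivially. Assume the claim holds for all convex combinations of fewer than $r$ points, with $r\ge 2$. Then $\lambda_1<1$, because every $\lambda_i$ is strictly positive and they sum to $1$.

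The key step is to split off $p_1$. Set $\mu := 1-\lambda_1 = \sum_{i=2}^r \lambda_i > 0$ and
$$q := \sum_{i=2}^{r} \frac{\lambda_i}{\mu}\, p_i .$$
This $q$ is a convex combination of points of $K$, so $q\in K$ by convexity. We then have $z = \lambda_1 p_1 + (1-\lambda_1) q$ with $\lambda_1\in(0,1)$, $p_1,q\in K$, and $z\in F$. The defining property of a face therefore gives $p_1\in F$ and $q\in F$.

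Now $q\in F$ is written as a convex combination of the $r-1$ points $p_2,\dots,p_r\in K$. Its coefficients $\lambda_i/\mu$ are strictly positive and sum to $1$. The induction hypothesis, applied to $q$ in place of $z$, yields $p_i\in F$ for $i=2,\dots,r$. Together with $p_1\in F$, this completes the induction.

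There is no real obstacle here. The only point requiring care is that the intermediate point $q$ lies in $K$, which holds because $K$ is convex, and that the two-point coefficient $\lambda_1$ lies strictly inside $(0,1)$. Both facts follow from $r\ge2$ and the strict positivity of the $\lambda_i$.
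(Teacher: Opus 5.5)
Your proof is correct and follows essentially the same argument as the paper: induction on $r$, splitting off $p_1$ via $q = \sum_{i\ge 2} (\lambda_i/\mu)\,p_i \in K$, applying the two-point face definition to $z = \lambda_1 p_1 + \mu q$, and then the induction hypothesis to $q$. The only cosmetic difference is that you absorb the case $r=2$ into the inductive step, whereas the paper treats it as a separate base case.
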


\begin{proof}
    The proof is by induction on $r$. The case $r = 1$ is trivial, and the case $r = 2$ is the defining property of a face. For $r \geq 3$, set $\mu := 1 - \lambda_1 \in (0,1)$ and $q := \sum_{i=2}^{r} (\lambda_i/\mu)\, p_i$. Since $K$ is convex, $q \in K$, and $z = \lambda_1 p_1 + \mu q$ is a convex combination of two points of $K$ with $\lambda_1, \mu \in (0,1)$. Since $z \in F$, the definition of a face of a convex set implies that $p_1, q \in F$. Applying the induction hypothesis to $q = \sum_{i=2}^{r} (\lambda_i/\mu)\, p_i \in F$ yields $p_i \in F$ for $i = 2,\ldots,r$.
\end{proof}

\begin{theorem} \label{thm:L-closure-hered}
    Let $K \subseteq \R^n$ be a convex set, and let $\L$ be any family of full-dimensional convex sets in $\R^n$. Then, for any face $F$ of $K$, 
    \begin{align}
        R_\L(F) = R_\L(K) \cap F.
    \end{align}
\end{theorem}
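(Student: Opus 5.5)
The plan is to reduce the statement to a single $L \in \L$ and then intersect over the family. Concretely, I will first show that for every full-dimensional convex $L$ and every face $F$ of $K$,
\begin{align}
    R_L(F) = R_L(K) \cap F. \notag
\end{align}
Intersecting over $L \in \L$ then gives
\[
R_\L(F) = \bigcap_{L} R_L(F) = \bigcap_{L} \bigl(R_L(K) \cap F\bigr) = R_\L(K) \cap F.
\]
This step is immediate, even when $\L$ is infinite, since intersection commutes with intersection. Closedness never enters, because $R_L$ is defined as a convex hull without closure.

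For the inclusion $R_L(F) \subseteq R_L(K) \cap F$, I will argue as follows. Since $F \subseteq K$, we have $F \setminus \operatorname{int}(L) \subseteq K \setminus \operatorname{int}(L)$, and taking convex hulls gives $R_L(F) \subseteq R_L(K)$. Also $R_L(F) = \operatorname{conv}(F \setminus \operatorname{int}(L)) \subseteq F$, because $F$ is convex.

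For the reverse inclusion, I take $z \in R_L(K) \cap F$. Since $R_L(K)$ is the convex hull of $K \setminus \operatorname{int}(L)$, by the definition of the convex hull (no closure is involved) I can write $z = \sum_{i=1}^r \lambda_i p_i$ with $\lambda_i > 0$, $\sum_i \lambda_i = 1$, and $p_i \in K \setminus \operatorname{int}(L)$. Because each $p_i \in K$ and $z \in F$, Lemma~\ref{lemma:face-decomposition} gives $p_i \in F$ for every $i$. Hence $p_i \in F \setminus \operatorname{int}(L)$, and so $z \in \operatorname{conv}(F \setminus \operatorname{int}(L)) = R_L(F)$.

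The only delicate point is this reverse inclusion. It depends on $R_L$ being the plain convex hull, so that $z$ is a finite convex combination to which Lemma~\ref{lemma:face-decomposition} applies. Note that $\operatorname{int}(L)$ always refers to the interior in $\R^n$ of the same set $L$, whether we reduce $K$ or $F$; that is why the same family $\L$ can be used for both. No exposedness of $F$ is needed, since the lemma uses only the face definition.
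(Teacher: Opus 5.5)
Your proposal is correct and follows the paper's proof essentially step for step. The forward inclusion uses $F \setminus \operatorname{int}(L) \subseteq K \setminus \operatorname{int}(L)$ together with convexity of $F$, and the reverse inclusion writes a point of $R_L(K) \cap F$ as a finite positive convex combination of points of $K \setminus \operatorname{int}(L)$ and applies Lemma~\ref{lemma:face-decomposition}. The only cosmetic difference is that you first prove $R_L(F) = R_L(K) \cap F$ for each single $L$ and then intersect, whereas the paper fixes $L$ inside each of its two inclusion arguments.
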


\begin{proof}
    We first prove the inclusion $R_\L(F) \subseteq R_\L(K) \cap F$. Fix $L \in \L$. Since $F \subseteq K$, we have $F \setminus \operatorname{int}(L) \subseteq K \setminus \operatorname{int}(L)$, and the convexity of $F$ gives $\operatorname{conv}(F \setminus \operatorname{int}(L)) \subseteq F$. Hence
    \begin{align}
        \operatorname{conv}(F \setminus \operatorname{int}(L)) \subseteq F \cap \operatorname{conv}(K \setminus \operatorname{int}(L)) = F \cap R_L(K).
    \end{align}
    Intersecting over all $L \in \L$ yields $R_\L(F) \subseteq R_\L(K) \cap F$.

    Next, we prove the reverse inclusion. Let $x \in R_\L(K) \cap F$ and fix $L \in \L$. Because $R_L(K) = \operatorname{conv}(K \setminus \operatorname{int}(L))$ is a convex set, there exist finitely many points $p_1,\ldots,p_r \in K \setminus \operatorname{int}(L)$ and scalars $\lambda_1,\ldots,\lambda_r > 0$ with $\sum_{i=1}^{r} \lambda_i = 1$ such that $x = \sum_{i=1}^{r} \lambda_i p_i$. Since $x \in F$, Lemma~\ref{lemma:face-decomposition} gives $p_i \in F$ for all $i$, and consequently $p_i \in F \setminus \operatorname{int}(L)$. Therefore $x \in \operatorname{conv}(F \setminus \operatorname{int}(L)) = R_L(F)$. Since $L \in \L$ was arbitrary, $x \in R_\L(F)$.
\end{proof}

The only hypothesis is that the \emph{same} family $\L$ be used for both $K$ and its face $F$. The family itself is arbitrary, and the proof applies to every face $F$, without requiring exposedness. Consequently, to derive the hereditary property of a cutting-plane procedure from Theorem~\ref{thm:L-closure-hered}, it suffices to exhibit a single family $\L$ such that the procedure applied to $K$ yields $R_\L(K)$ and the procedure applied to $F$ yields $R_\L(F)$. A family chosen independently of the underlying set is automatically common to $K$ and $F$. By contrast, procedures such as the Chv\'atal-Gomory and Dantzig closures are intrinsically tied to the description of the set to which they are applied. When applied to a face $F$ of $K$, they generate cuts from the description of $F$, and no common family is apparent. For several such procedures the hereditary property fails outright (Section~\ref{sec:non-hered}), and Theorem~\ref{thm:L-closure-hered} then rules out any common family. The Gomory fractional cut closure and the Dantzig closure are discussed within the corner-polyhedron framework of Section~\ref{sec:corner-polyhedron}.

\section{The Hereditary Property of Cutting-Plane Procedures via \texorpdfstring{$\L$}{L}-Closures} \label{sec:L-closure-applications}

In this section, we apply Theorem~\ref{thm:L-closure-hered} to derive the hereditary property of several classical cutting-plane procedures. We consider the setup of Section~\ref{subsec:notation}. For each procedure we show that the closures of the underlying convex set and of its faces can be expressed as $\L$-closures for a common family $\L$, so that Theorem~\ref{thm:L-closure-hered} applies. For the split, lift-and-project, Lov\'asz--Schrijver, and Sherali--Adams procedures, the family does not depend on the underlying convex set, and is therefore automatically common. For the Lasserre hierarchy, the family is built from the inequalities describing the polyhedron, and showing that it also realizes the closure of each face requires an additional argument.

\subsection{Split Closure} \label{sec:split}

In this section, we discuss the split cuts of \cite{cook1990chvatal}, a special case of Balas' disjunctive cuts (\cite{balas1979disjunctive}). We show that the split closure is an $\L$-closure for a family $\L$ that does not depend on $K$, and is therefore common to $K$ and its faces. The hereditary property follows from Theorem~\ref{thm:L-closure-hered}.

A \emph{split} is a pair $(\pi, \pi_0) \in \Z^n \times \Z$ such that $\pi_j = 0$ for all $j \in C$.
Given $(\pi, \pi_0)$, define the disjunctive sets
\begin{align}
    \Pi_1 &:= K \cap \{x : \pi x \leq \pi_0\}, &
    \Pi_2 &:= K \cap \{x : \pi x \geq \pi_0 + 1\}, 
\end{align}
\begin{align}
   \mbox{ and } \;\;\;\; K^{(\pi, \pi_0)} := \operatorname{conv}(\Pi_1 \cup \Pi_2).
\end{align}
A \emph{split cut} is any linear inequality that is valid for $K^{(\pi, \pi_0)}$ for some split $(\pi, \pi_0)$ but is not valid for $K$.
The \emph{split closure} of $K$ is defined as 
\begin{align}
    K' := \bigcap_{(\pi, \pi_0)} K^{(\pi, \pi_0)},
\end{align}
where the intersection ranges over all splits.
Define $K^{(1)} := K'$ and $K^{(t)} := (K^{(t-1)})'$ for $t \geq 2$, the $t^{\text{th}}$ split closure of $K$.
If $K$ is a rational polyhedron, then so is $K'$ (\cite{cook1990chvatal}). If $K$ is a full dimensional, strictly convex, compact set, then $K'$ is defined by a finite number of splits, but is not necessarily polyhedral (\cite{dadush2011split}).


Define, for each split $(\pi, \pi_0)$, the lattice-free strip
\begin{align}
    L(\pi, \pi_0) := \{x \in \R^n : \pi_0 \leq \pi x \leq \pi_0 + 1\},
\end{align}
and let $\L_{\mathrm{split}} := \{L(\pi, \pi_0) : (\pi, \pi_0) \in \Z^n \times \Z, \ \pi_j = 0 \ \forall j \in C\}$. Note that $L(\pi, \pi_0)$ is full-dimensional and that the interior of $L(\pi, \pi_0)$ is $\{x : \pi_0 < \pi x < \pi_0 + 1\}$. Since the support of $\pi$ is contained in $I$, we have $\pi x \in \Z$ for every $x \in \Z^{|I|} \times \R^{|C|}$, so the interior is disjoint from $\Z^{|I|} \times \R^{|C|}$. Hence $L(\pi, \pi_0)$ is lattice-free in the mixed-integer sense.

For each split $(\pi, \pi_0)$, the $L$-reduction
\begin{align}
    R_{L(\pi,\pi_0)}(K) = \operatorname{conv}\big( (K \cap \{x : \pi x \leq \pi_0\}) \cup (K \cap \{x : \pi x \geq \pi_0 + 1\}) \big) = K^{(\pi, \pi_0)}
\end{align}
coincides with the split relaxation $K^{(\pi, \pi_0)}$. Intersecting over all splits gives
\begin{align}
    R_{\L_{\mathrm{split}}}(K) = K'.
\end{align}
Thus the split closure of $K$ is precisely the $\L_{\mathrm{split}}$-closure of $K$. Since $\L_{\mathrm{split}}$ depends only on the integer index set $I$, and not on $K$, Theorem~\ref{thm:L-closure-hered} applies and yields the following corollary.

\begin{corollary}[Split closure] \label{cor:split-hered}
    For any  convex set $K \subseteq \R^n$ and any face $F$ of $K$,
    \begin{align}
        F' = K' \cap F. \notag
    \end{align}
    By induction on $t$, $F^{(t)} = K^{(t)} \cap F$ for all $t \geq 1$.
\end{corollary}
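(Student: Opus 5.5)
The plan is to read the corollary directly off Theorem~\ref{thm:L-closure-hered}, using the identification $K' = R_{\L_{\mathrm{split}}}(K)$ established just above. The key point is that this identification is not special to $K$. The computation of $R_{L(\pi,\pi_0)}(K)$ used only that $K$ is convex and that
\[
K \setminus \operatorname{int}(L(\pi,\pi_0)) = \Pi_1 \cup \Pi_2 .
\]
So the same computation applies verbatim to any convex set. Every face $F$ of $K$ is convex, hence $F' = R_{\L_{\mathrm{split}}}(F)$ with the \emph{same} family $\L_{\mathrm{split}}$, because that family depends only on $I$.

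For the first claim, I apply Theorem~\ref{thm:L-closure-hered} with $\L = \L_{\mathrm{split}}$. Each strip $L(\pi,\pi_0)$ is full-dimensional. This is clear when $\pi \neq 0$. The degenerate choice $\pi = 0$ needs a remark: then $L(0,\pi_0)$ is $\R^n$ if $\pi_0 \in \{-1,0\}$, and empty otherwise.
\begin{itemize}
\item The empty set, if included, is not full-dimensional, but its reduction is trivially $K$.
\item $\R^n$ is full-dimensional; its interior removes everything, so the reduction is $\operatorname{conv}(\emptyset) = \emptyset$.
\end{itemize}
In both cases the reduction is consistent with $K^{(0,\pi_0)} = \operatorname{conv}(\Pi_1 \cup \Pi_2)$ computed directly. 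The proof of Theorem~\ref{thm:L-closure-hered} never actually used full-dimensionality, so these cases are harmless either way. Alternatively I can restrict to $\pi \neq 0$, which does not change the closure in any meaningful way. The theorem then gives $R_{\L}(F) = R_{\L}(K) \cap F$, that is, $F' = K' \cap F$.

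For the iterated statement, I induct on $t$. The case $t=1$ is the first claim. For $t \ge 2$, assume $F^{(t-1)} = K^{(t-1)} \cap F$. I need $F^{(t-1)}$ to be a face of the convex set $K^{(t-1)}$, so that the first claim can be applied with $K^{(t-1)}$ in place of $K$. I will show this directly. Take $u,v \in K^{(t-1)}$ with $\lambda u + (1-\lambda) v \in K^{(t-1)} \cap F$. Since $K^{(t-1)} \subseteq K$ and $F$ is a face of $K$, we get $u,v \in F$. Hence $u,v \in K^{(t-1)} \cap F$. This shows that the intersection of a face of $K$ with a convex subset of $K$ is a face of that subset. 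Then
\[
F^{(t)} = \bigl(F^{(t-1)}\bigr)' = \bigl(K^{(t-1)}\bigr)' \cap F^{(t-1)} = K^{(t)} \cap K^{(t-1)} \cap F = K^{(t)} \cap F,
\]
where the last equality uses $K^{(t)} \subseteq K^{(t-1)}$.

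The main obstacle is mild. It is making sure the iterated closures stay within the hypotheses, namely convexity, which holds because $K'$ is an intersection of convex sets, and the face property established above. Closedness is not needed, since Theorem~\ref{thm:L-closure-hered} is stated for arbitrary convex sets.
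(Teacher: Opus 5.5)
Your route is the paper's: $K' = R_{\L_{\mathrm{split}}}(K)$ for a family depending only on $I$, so Theorem~\ref{thm:L-closure-hered} applies to $K$ and $F$ alike. Your check that $K^{(t-1)} \cap F$ is a face of $K^{(t-1)}$ correctly supplies the induction step that the paper only names.

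One claim in your degenerate-case remark is wrong, though. For $\pi = 0$ and $\pi_0 \in \{-1,0\}$, one of $\Pi_1,\Pi_2$ equals $K$, so $K^{(0,\pi_0)} = K$, whereas $R_{\R^n}(K) = \emptyset$. Your key identity $K \setminus \operatorname{int}(L(\pi,\pi_0)) = \Pi_1 \cup \Pi_2$ fails there because $\operatorname{int}(\R^n) = \R^n$; note also that $\R^n$ is not lattice-free. Keeping these strips would collapse $R_{\L_{\mathrm{split}}}(K)$ to $\emptyset$, so they are not ``harmless either way.''

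You must take your stated alternative and exclude $\pi = 0$. This leaves $K'$ unchanged, since $K^{(0,\pi_0)} = K$ for every $\pi_0 \in \Z$. The paper tacitly makes the same restriction when it asserts that every strip is full-dimensional with interior $\{x : \pi_0 < \pi x < \pi_0 + 1\}$.
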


\subsection{Lift-and-Project Closure} \label{sec:lift-project}

We next consider the lift-and-project cuts of \cite{balas1993lift}, which arise from the binary disjunction $x_i \in \{0,1\}$ for each integer-restricted variable $x_i, i \in I \subseteq \{ 1, \ldots, n \}$.

Suppose the integer variables indexed by $I$ are binary, so that
\begin{align}
    0 \leq x_i \leq 1 \quad \text{for all } i \in I.
\end{align}
The \emph{lift-and-project closure} of $K$ is defined as
\begin{align}
    \operatorname{LP}(K) := \bigcap_{i \in I}
    \operatorname{conv}\big((K \cap \{x_i = 0\}) \cup (K \cap \{x_i = 1\})\big).
\end{align}
Let $\operatorname{LP}^0(K) := K$ and define recursively $\operatorname{LP}^{k}(K) := \operatorname{LP}(\operatorname{LP}^{k-1}(K))$ for $k \geq 1$.
By Balas' sequential convexification theorem, $\operatorname{LP}^{|I|}(K) = \operatorname{conv}(S)$.


For each binary variable $x_i$, the disjunction $x_i \in \{0,1\}$ can be represented by the lattice-free set
\begin{align}
    L_i := \{x \in \R^n : 0 \leq x_i \leq 1\}.
\end{align}
Choosing $\L_{\mathrm{lp}} := \{L_i : i \in I\}$ and applying the reduction operator gives
\begin{align}
    R_{\L_{\mathrm{lp}}}(K) = \bigcap_{i \in I} \operatorname{conv}\big((K \cap \{x_i = 0\}) \cup (K \cap \{x_i = 1\})\big) = \operatorname{LP}(K),
\end{align}
which coincides exactly with the lift-and-project closure. Since $\L_{\mathrm{lp}}$ depends only on $I$ and not on $K$, Theorem~\ref{thm:L-closure-hered} applies.

\begin{corollary}[Lift-and-project closure] \label{cor:lp-hered}
    Let $K \subseteq \R^n$ be a convex set such that $0 \leq x_i \leq 1$ is valid for all $i \in I$, and let $F$ be a face of $K$. Then
    \begin{align}
        \operatorname{LP}(F) = \operatorname{LP}(K) \cap F,
    \end{align}
    and by induction $\operatorname{LP}^k(F) = \operatorname{LP}^k(K) \cap F$ for all $k \geq 1$.
\end{corollary}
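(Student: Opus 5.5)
The plan is to deduce Corollary~\ref{cor:lp-hered} directly from Theorem~\ref{thm:L-closure-hered} with the family $\L_{\mathrm{lp}} = \{L_i : i \in I\}$. This family depends only on $I$, so it is common to $K$ and to every face $F$. The one point needing care is that $\operatorname{LP}(F)$ is well defined and really equals $R_{\L_{\mathrm{lp}}}(F)$. This requires the hypothesis $0 \le x_i \le 1$ to hold on $F$, which it does because $F \subseteq K$.

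\textbf{Identity $R_{L_i}(G) = \operatorname{conv}((G \cap \{x_i=0\}) \cup (G \cap \{x_i=1\}))$.} This is the identity the paper asserts for $K$; we verify it for an arbitrary convex set $G$ on which $0 \le x_i \le 1$ is valid, applying it to both $G = K$ and $G = F$. The set $L_i$ is full-dimensional, and its interior is $\{x : 0 < x_i < 1\}$. Hence
\[
G \setminus \operatorname{int}(L_i) = G \cap \{x : x_i \le 0 \text{ or } x_i \ge 1\}.
\]
By validity of the bounds on $G$, this equals $(G \cap \{x_i = 0\}) \cup (G \cap \{x_i=1\})$. Intersecting over $i \in I$ gives $R_{\L_{\mathrm{lp}}}(G) = \operatorname{LP}(G)$.

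\textbf{Case $k=1$.} Theorem~\ref{thm:L-closure-hered} gives $R_{\L_{\mathrm{lp}}}(F) = R_{\L_{\mathrm{lp}}}(K) \cap F$. By the identity above, this reads $\operatorname{LP}(F) = \operatorname{LP}(K) \cap F$.

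\textbf{Induction on $k$.} Assume $\operatorname{LP}^{k-1}(F) = \operatorname{LP}^{k-1}(K) \cap F$. To apply the case $k=1$ to the pair $\operatorname{LP}^{k-1}(K)$ and $\operatorname{LP}^{k-1}(F)$, I need the following lemma: if $F$ is a face of $K$ and $K_1 \subseteq K$ is convex, then $K_1 \cap F$ is a face of $K_1$.

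The lemma follows from the definition of a face. Suppose $u, v \in K_1$ and $\lambda u + (1-\lambda)v \in K_1 \cap F$. Then $u, v \in K$, so $u, v \in F$ because $F$ is a face of $K$. Therefore $u, v \in K_1 \cap F$.

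I apply the lemma with $K_1 = \operatorname{LP}^{k-1}(K) \subseteq K$. By the induction hypothesis, $\operatorname{LP}^{k-1}(F)$ is then a face of $\operatorname{LP}^{k-1}(K)$. The bounds $0 \le x_i \le 1$ remain valid on it, so the case $k=1$ applies and gives
\[
\operatorname{LP}^k(F) = \operatorname{LP}(\operatorname{LP}^{k-1}(K)) \cap \operatorname{LP}^{k-1}(F) = \operatorname{LP}^k(K) \cap \operatorname{LP}^{k-1}(K) \cap F.
\]
Since $\operatorname{LP}^k(K) \subseteq \operatorname{LP}^{k-1}(K)$, the right-hand side simplifies to $\operatorname{LP}^k(K) \cap F$, which completes the induction.

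The face-transfer lemma is the only step beyond routine. It is short, so I expect no real obstacle.
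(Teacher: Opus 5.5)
Your proposal is correct and takes the paper's route: you realize $\operatorname{LP}$ as the $\L_{\mathrm{lp}}$-closure for the $K$-independent family $\{L_i : i \in I\}$ and invoke Theorem~\ref{thm:L-closure-hered}. Two of your details, which the paper leaves implicit in the phrase ``by induction'', also hold: the identity $R_{L_i}(G) = \operatorname{conv}\bigl((G \cap \{x_i=0\}) \cup (G \cap \{x_i=1\})\bigr)$ for any convex $G$ on which the bounds are valid, and the lemma that $K_1 \cap F$ is a face of $K_1$ whenever $K_1 \subseteq K$ is convex.
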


\subsection{Lov\'asz--Schrijver Procedure} \label{sec:lovasz-schrijver}

We now turn to the procedure of \cite{lovasz1991cones}. This procedure operates in a lifted matrix space and is therefore not, at first sight, an $\L$-closure in the original variables. Nevertheless, after the lift, both the diagonal-equality strengthening and the positive semidefinite strengthening can be expressed as intersection cuts with respect to families that do not depend on $K$. Theorem~\ref{thm:L-closure-hered}, applied in the lifted space, then yields the hereditary property after projection.

We present the Lov\'asz--Schrijver procedure in the mixed-integer setting, where the integer-restricted variables indexed by $I$ satisfy $0 \leq x_i \leq 1$ and the continuous variables indexed by $C$ are unrestricted. Throughout this subsection, $K$ is a convex set with $K \subseteq \{x \in \R^n : 0 \leq x_i \leq 1,\ i \in I\}$, where $I \neq \emptyset$. The pure binary case is recovered when $C = \emptyset$ and $K \subseteq [0,1]^n$.

Define the \emph{homogenization cone} of $K$ as
\begin{align}
    \tilde K := \big\{ \lambda \begin{pmatrix} 1 \\ x \end{pmatrix} : x \in K,\ \lambda \geq 0 \big\} \subseteq \R^{n+1}, \label{def:hom-cone}
\end{align}
where the additional coordinate is indexed by $0$, and let $e^0,e^1,\ldots,e^n$ denote the unit vectors in $\R^{n+1}$. The procedure proceeds as follows:

\begin{enumerate}
    \item Let $\hat M(K)$ denote the set of symmetric $(n+1)\times(n+1)$ matrices $Y$ such that
    \begin{align}
        y_{00} = 1, \qquad Ye^i \in \tilde K, \qquad Ye^0 - Ye^i \in \tilde K, \qquad i \in I. \label{ls-cond-hat}
    \end{align}
    We can view the set $\hat M(K)$ as a subset of $\R^{\frac{(n+1)(n+2)}{2}}$ because of the symmetry $y_{ij} = y_{ji}$ for $i \not= j$. \eqref{ls-cond-hat} are imposed only for $i \in I$. To see why, note that for any $x \in S$ and the rank-one lift $Y = \binom{1}{x}(1 \ x)$,
    \begin{align}
        Ye^i = x_i\binom{1}{x}, \qquad Ye^0 - Ye^i = (1-x_i)\binom{1}{x}.
    \end{align}
    Both lie in the convex cone $\tilde K$ exactly when $x_i \geq 0$ and $1 - x_i \geq 0$, which is guaranteed only for $i \in I$. The matrix $Y$ retains all $(n+1)$ rows and columns. By symmetry, the constraints~\eqref{ls-cond-hat} hold for every entry $y_{ki}$ with $i \in I$ (and hence $y_{ik}$ as well), and the implicit constraint $Ye^0 \in \tilde K$, which is obtained by summing the two memberships and using that $\tilde K$ is a convex cone, holds for the $0$-th column. The only entries of $Y$ left fully unconstrained by~\eqref{ls-cond-hat} are those in the $C \times C$ block $\{y_{jk} : j,k \in C\}$.
    \item Strengthen $\hat M(K)$ by imposing the diagonal equalities $y_{ii} = y_{0i}$ for $i \in I$, and denote the resulting set by $M(K)$. These equalities encode $x_i^2 = x_i$ and are therefore imposed only for the integer-restricted variables.
    \item Let $M_+(K) := \{Y \in M(K) : Y \succeq 0\}$.
    \item Project $M_+(K)$ onto the $x$-space:
    \begin{align}
        N_+(K) := \big\{x \in \R^n : \begin{pmatrix} 1 \\ x \end{pmatrix} = Ye^0 \text{ for some } Y \in M_+(K)\big\}.
    \end{align}
\end{enumerate}

Observe that $N_+(K)$ is a convex set. Moreover, $N_+(K) \subseteq K$, because for any $i \in I$ and $Y \in M_+(K)$ with $Ye^0 = \binom{1}{x}$, we have $Ye^0 = Ye^i + (Ye^0 - Ye^i) \in \tilde K$, so $\binom{1}{x} \in \tilde K$ and hence $x \in K$. In particular, $N_+(K)$ inherits the bounds $0 \leq x_i \leq 1$ for $i \in I$. The procedure can therefore be iterated. We define $N_+^0(K) := K$ and $N_+^t(K) := N_+(N_+^{t-1}(K))$ for $t \geq 1$.

When $K =  \{x \in \R^n : 0 \leq x_i \leq 1,\ i \in I,\ a^\ell x \leq b_\ell,\ \ell = 1,\ldots,m\}$ is a polyhedron, the constraint $Ye^i \in \tilde K$ for $i \in I$ is equivalent to the linearized inequalities $b_\ell y_{0i} - \sum_k a^\ell_k y_{ki} \geq 0$ obtained from $x_i(b_\ell - a^\ell x) \geq 0$, and $Ye^0 - Ye^i \in \tilde K$ corresponds to the linearization of $(1-x_i)(b_\ell - a^\ell x) \geq 0$. These products are formed only for the integer-restricted variables $x_i$, $i \in I$, for which $x_i \geq 0$ and $1 - x_i \geq 0$ are valid.

\subsubsection{The Lov\'asz--Schrijver Strengthening as an \texorpdfstring{$\L$}{L}-Closure} \label{subsec:ls-as-intersection}

We first show that the strengthening obtained by enforcing $y_{0i} = y_{ii}$ in Step 2 of the Lov\'asz--Schrijver procedure arises as intersection cuts.

For a fixed $i \in I$, define the set $L_i := \{Y \in \R^{\frac{(n+1)(n+2)}{2}} : 0 \leq y_{0i} - y_{ii} \leq 1\}$.
Notice that this set is a split, which is closed, convex, and lattice-free.

\begin{lemma} \label{lemma:ls-first-strengthen}
    For each $i \in I$,
    \begin{align}
        \operatorname{conv}\bigl(\hat M(K) \setminus \operatorname{int}(L_i)\bigr)
        = \hat M(K) \cap \{ y_{ii} = y_{0i}\}.
    \end{align}
\end{lemma}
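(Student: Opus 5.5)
The plan is to show directly that $\hat M(K)$ already lies inside the split $L_i$, and that it never meets the far boundary hyperplane $\{y_{0i}-y_{ii}=1\}$. Then $\hat M(K)\setminus \operatorname{int}(L_i)$ collapses to the convex set $\hat M(K)\cap\{y_{ii}=y_{0i}\}$, and taking the convex hull changes nothing.

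\textbf{Step 1: $\hat M(K)\subseteq L_i$.} Fix $Y\in\hat M(K)$ and $i\in I$. By the definition~\eqref{def:hom-cone} of $\tilde K$, the condition $Ye^i\in\tilde K$ gives $Ye^i=\lambda\binom{1}{x}$ for some $\lambda\ge 0$ and $x\in K$. Reading off coordinates $0$ and $i$ yields $y_{0i}=\lambda$ and $y_{ii}=\lambda x_i$. Since $K\subseteq\{0\le x_i\le 1\}$, we get $0\le y_{ii}\le y_{0i}$.

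Similarly, $Ye^0-Ye^i=\mu\binom{1}{x'}$ with $\mu\ge0$ and $x'\in K$. Coordinate $0$ gives $\mu=y_{00}-y_{0i}=1-y_{0i}$. Coordinate $i$ gives $y_{i0}-y_{ii}=\mu x_i'$, and by symmetry $y_{i0}=y_{0i}$. Hence $0\le y_{0i}-y_{ii}\le \mu = 1-y_{0i}\le 1$. Therefore $0\le y_{0i}-y_{ii}\le 1$, that is, $Y\in L_i$.

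\textbf{Step 2: the far boundary is never attained.} Suppose $y_{0i}-y_{ii}=1$. The chain in Step 1 forces $1-y_{0i}\ge 1$, so $y_{0i}\le 0$. Combined with $0\le y_{ii}\le y_{0i}$, this gives $y_{0i}=y_{ii}=0$, and then $y_{0i}-y_{ii}=0\neq 1$, a contradiction.

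\textbf{Step 3: conclude.} By Steps 1 and 2,
\[
\hat M(K)\setminus\operatorname{int}(L_i)=\hat M(K)\cap\{y_{0i}-y_{ii}\in\{0,1\}\}=\hat M(K)\cap\{y_{ii}=y_{0i}\}.
\]
The set $\hat M(K)$ is convex because $\tilde K$ is a convex cone and all the defining conditions are linear in $Y$. Intersecting it with a hyperplane keeps it convex, so its convex hull is itself, which is the claimed identity.

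The only delicate point is the coordinate bookkeeping in Step 1: using symmetry to identify $(Ye^0-Ye^i)_i$ with $y_{0i}-y_{ii}$, and using the bound $\mu=1-y_{0i}$ rather than just $\mu\ge 0$. It is also worth checking the degenerate case $\lambda=0$ or $\mu=0$, where the vector is $0$ and the inequalities still hold trivially. No appeal to Theorem~\ref{thm:L-closure-hered} is needed here; this lemma only identifies the Step 2 strengthening as an $L_i$-reduction, with $L_i$ independent of $K$, for later use with that theorem.
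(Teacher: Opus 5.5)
Your proof is correct and follows essentially the same route as the paper: both extract $0\le y_{ii}\le y_{0i}$ and $y_{0i}-y_{ii}\le 1-y_{0i}$ (the paper's McCormick inequalities $y_{ii}\le y_{0i}$, $2y_{0i}-1\le y_{ii}$) from the two cone memberships, show that the side $y_{0i}-y_{ii}\ge 1$ of the split contributes nothing, and conclude by convexity of $\hat M(K)\cap\{y_{ii}=y_{0i}\}$. The only cosmetic difference is that you first show $\hat M(K)\subseteq L_i$ and then exclude the far hyperplane, whereas the paper shows directly that $\hat M(K)\cap\{y_{0i}-y_{ii}\ge 1\}$ is empty.
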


\begin{proof}
    By the definition of $L_i$,
    \begin{align}
        \hat M(K) \setminus \operatorname{int}(L_i)
        &= \bigl(\hat M(K) \cap \{y_{0i} - y_{ii} \leq 0\}\bigr)
        \cup \bigl(\hat M(K) \cap \{y_{0i} - y_{ii} \geq 1\}\bigr). \label{eq:ls-disjunction-matrix}
    \end{align}
    We claim that the McCormick-type inequalities
    \begin{align}
        y_{ii} \leq y_{0i}, \qquad 2y_{0i} - 1 \leq y_{ii} \label{ineq:mccormick-matrix}
    \end{align}
    are valid for every $Y \in \hat M(K)$. Indeed, since $Ye^i \in \tilde K$, there exist $\lambda \geq 0$ and $x \in K$ with $Ye^i = \lambda \binom{1}{x}$. The $0$-th coordinate gives $\lambda = y_{0i}$, and the $i$-th coordinate gives $y_{ii} = \lambda x_i = y_{0i}\, x_i$. Since $i \in I$ and hence $0 \leq x_i \leq 1$, this yields $0 \leq y_{ii} \leq y_{0i}$. Similarly, $Ye^0 - Ye^i \in \tilde K$ gives $Ye^0 - Ye^i = \mu\binom{1}{x'}$ for some $\mu \geq 0$ and $x' \in K$. The $0$-th coordinate gives $\mu = 1 - y_{0i}$, and the $i$-th coordinate gives $y_{0i} - y_{ii} = (1-y_{0i})x'_i \leq 1 - y_{0i}$, which rearranges to $2y_{0i} - 1 \leq y_{ii}$.

    We observe from the first relation in \eqref{ineq:mccormick-matrix} that the first disjunction in \eqref{eq:ls-disjunction-matrix} reduces to $\hat M(K) \cap \{y_{ii} = y_{0i}\}$.
    For the second disjunction, combining $y_{0i} - y_{ii} \geq 1$ with $2y_{0i} - 1 \leq y_{ii}$ in \eqref{ineq:mccormick-matrix} yields $y_{ii} \leq -1$,
    which contradicts $y_{ii} \geq 0$. Hence the second disjunction is empty, and
    \begin{align}
        \hat M(K) \setminus \operatorname{int}(L_i) &= \hat M(K) \cap \{y_{ii} = y_{0i}\}.
    \end{align}
    Since both sets $\hat M(K)$ and $\{y_{ii} = y_{0i}\}$ are convex, so is their intersection. Taking the convex hull does not enlarge the set:
    \begin{align}
        \operatorname{conv}\bigl(\hat M(K) \setminus \operatorname{int}(L_i)\bigr) = \hat M(K) \cap \{y_{ii} = y_{0i}\}.
    \end{align}
\end{proof}

Thus, the substitution $y_{ii} = y_{0i}$ is precisely the result of applying an intersection cut with respect to $L_i$. Taking all $i \in I$, let $\L := \{L_i : i \in I\}$ be a family of such closed, convex, lattice-free sets.

\begin{lemma} \label{lemma:ls-family} 
    $R_\L(\hat M(K)) = M(K)$.
\end{lemma}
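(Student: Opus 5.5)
By definition, $R_\L(\hat M(K)) = \bigcap_{i \in I} R_{L_i}(\hat M(K)) = \bigcap_{i \in I} \operatorname{conv}\bigl(\hat M(K) \setminus \operatorname{int}(L_i)\bigr)$. My plan is to compute each term with Lemma~\ref{lemma:ls-first-strengthen} and then intersect. No induction or face argument is needed here.

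First, Lemma~\ref{lemma:ls-first-strengthen} replaces each term in the intersection by $\hat M(K) \cap \{y_{ii} = y_{0i}\}$. That lemma holds for every fixed $i \in I$, with no further assumption on $K$ beyond $K \subseteq \{x : 0 \le x_i \le 1,\ i \in I\}$. Hence
\begin{align}
    R_\L(\hat M(K)) = \bigcap_{i \in I} \bigl(\hat M(K) \cap \{y_{ii} = y_{0i}\}\bigr) = \hat M(K) \cap \bigcap_{i\in I}\{y_{ii} = y_{0i}\}. \notag
\end{align}
Here I use that $I \neq \emptyset$, so that the factor $\hat M(K)$ actually appears in the intersection. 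The right-hand side is, by the definition in Step 2 of the procedure, exactly $M(K)$.

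I do not expect a genuine obstacle. The one point to check is that the reductions are taken of the fixed set $\hat M(K)$ separately for each $i$, rather than sequentially. The equality holds because each individual reduction already equals the convex set $\hat M(K) \cap \{y_{ii}=y_{0i}\}$, and intersecting these sets imposes all the diagonal equalities simultaneously.

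It is also worth noting explicitly that each $L_i$ is full-dimensional in $\R^{\frac{(n+1)(n+2)}{2}}$, being a strip between two parallel hyperplanes. This is what places $\L$ within the scope of Theorem~\ref{thm:L-closure-hered}, which is where this lemma will be used. Finally, $\L$ depends only on $I$ and not on $K$.
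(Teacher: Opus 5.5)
Your proposal is correct and matches the paper's proof: both apply Lemma~\ref{lemma:ls-first-strengthen} to each $L_i$, $i \in I$, intersect the resulting sets $\hat M(K) \cap \{y_{ii} = y_{0i}\}$, and recognize the result as $M(K)$. Your remark that $I \neq \emptyset$ is needed is right, and the paper makes this a standing assumption of the subsection.
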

\begin{proof}
    \begin{align}
        R_\L(\hat M(K)) &= \bigcap_{L_i \in \L} \operatorname{conv}\big(\hat M(K) \setminus \operatorname{int}(L_i)\big) \\
        &= \bigcap_{i \in I} \big(\hat M(K) \cap \{y_{ii} = y_{0i}\}\big)  \\
        &= \hat M(K) \cap \{y_{ii} = y_{0i}, \ \forall i \in I\} \\
        &=: M(K),
    \end{align}
    where the second equality follows from Lemma \ref{lemma:ls-first-strengthen}.
\end{proof}

We next show that the positive semidefinite strengthening in Step 3 of the Lov\'asz-Schrijver procedure can also be interpreted as intersection cuts.

Consider the cone of symmetric positive semidefinite matrices $\mathbb{S}^{n+1}_+ = \{Y \in \R^{\frac{(n+1)(n+2)}{2}} : Y \succeq 0\}$. It is well known that $\mathbb{S}^{n+1}_+$ is a closed convex cone and that all rank-one matrices lie on its boundary (see, for example, \cite{ben2001lectures}, Section 4.1, or \cite{boyd2004convex}, Section 2.4.1, Example 2.15).

Let $\mathcal{R} := \{xx^\top : x \in \R^{n+1}\setminus\{0\}\}$ denote the set of rank-one symmetric $(n+1) \times (n+1)$ matrices. Note that every rank-one matrix $xx^\top$ is positive semidefinite, so $\mathcal{R} \subseteq \mathbb{S}^{n+1}_+$.
A convex set $L \subseteq \R^{\frac{(n+1)(n+2)}{2}}$ is said to be \emph{$\mathcal{R}$-free} if $\operatorname{int}(L) \cap \mathcal{R} = \emptyset$.

Let $\L'$ be the family of all closed halfspaces $L \subseteq \R^{\frac{(n+1)(n+2)}{2}}$ whose interior is disjoint from $\mathbb{S}^{n+1}_+$:
\begin{align}
    \L' := \{L \subseteq \R^{\frac{(n+1)(n+2)}{2}} : L \text{ is a closed halfspace with } \operatorname{int}(L) \cap \mathbb{S}^{n+1}_+ = \emptyset\}.
\end{align}
Each $L \in \L'$ is $\mathcal{R}$-free since $\mathcal{R} \subseteq \mathbb{S}^{n+1}_+$. Equivalently, every $L \in \L'$ has the form $L = \{Z : \alpha^\top Z \geq \beta\}$, where $H^+_L := \{Z : \alpha^\top Z \leq \beta\}$ is a closed halfspace containing $\mathbb{S}^{n+1}_+$. Since $\mathbb{S}^{n+1}_+$ is a closed convex set, it equals the intersection of all such supporting halfspaces:
\begin{align}
    \mathbb{S}^{n+1}_+ = \bigcap_{L \in \L'} H^+_L. \label{eq:psd-halfspace-intersection}
\end{align}

\begin{lemma} \label{lemma:ls-psd-strengthen}
    For any convex set $C \subseteq \R^{\frac{(n+1)(n+2)}{2}}$,
    \begin{align}
        R_{\L'}(C) = C \cap \mathbb{S}^{n+1}_+. \notag
    \end{align}
    In particular, $R_{\L'}(M(K)) = M_+(K)$.
\end{lemma}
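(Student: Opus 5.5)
The plan is to compute each $L$-reduction $R_L(C)$ for $L \in \L'$ explicitly, show that it is simply $C$ cut by one halfspace, and then intersect over the family using \eqref{eq:psd-halfspace-intersection}. No convex-hull operation should be needed, because removing the interior of a halfspace from a convex set already leaves a convex set.

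Fix $L \in \L'$ and write $L = \{Z : \alpha^\top Z \geq \beta\}$ with $\alpha \neq 0$, so that $L$ is full-dimensional. Its interior is the open halfspace $\{Z : \alpha^\top Z > \beta\}$. I would first record the equivalence behind the description of $\L'$: $\operatorname{int}(L) \cap \mathbb{S}^{n+1}_+ = \emptyset$ holds if and only if $\mathbb{S}^{n+1}_+ \subseteq H^+_L = \{Z : \alpha^\top Z \leq \beta\}$. Consequently $L \mapsto H^+_L$ is a bijection between $\L'$ and the closed halfspaces containing $\mathbb{S}^{n+1}_+$. Then
\begin{align}
    C \setminus \operatorname{int}(L) = C \cap \{Z : \alpha^\top Z \leq \beta\} = C \cap H^+_L. \notag
\end{align}
This set is an intersection of two convex sets, hence convex, so $R_L(C) = \operatorname{conv}(C \setminus \operatorname{int}(L)) = C \cap H^+_L$.

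Intersecting over all $L \in \L'$ then gives
\begin{align}
    R_{\L'}(C) = \bigcap_{L \in \L'} (C \cap H^+_L) = C \cap \bigcap_{L \in \L'} H^+_L = C \cap \mathbb{S}^{n+1}_+, \notag
\end{align}
where the last step is \eqref{eq:psd-halfspace-intersection}. The particular case follows by taking $C = M(K)$, which is convex as the intersection of $\hat M(K)$ with affine equalities. By definition, $M_+(K) = M(K) \cap \mathbb{S}^{n+1}_+$.

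The main point to justify carefully is \eqref{eq:psd-halfspace-intersection} itself, together with the interior equivalence used above. This is the standard fact that a closed convex set is the intersection of the closed halfspaces containing it, which follows by strict separation of any $Z \notin \mathbb{S}^{n+1}_+$ from the closed convex cone $\mathbb{S}^{n+1}_+$. Since $\mathbb{S}^{n+1}_+$ is a proper subset of the space, the family $\L'$ is nonempty, so the intersection is not vacuous. Beyond this, everything is elementary.
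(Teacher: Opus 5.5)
Your proposal is correct and takes essentially the same route as the paper. In both, each $L$-reduction is $C \cap H^+_L$, because removing an open halfspace from a convex set leaves a convex set, and intersecting over $\L'$ via \eqref{eq:psd-halfspace-intersection} then yields $C \cap \mathbb{S}^{n+1}_+$. Your extra remarks, the correspondence between $\L'$ and the closed halfspaces containing the cone and the separation argument behind \eqref{eq:psd-halfspace-intersection}, make explicit what the paper asserts just before the lemma.
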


\begin{proof}
    For each $L \in \L'$, $C \setminus \operatorname{int}(L) = C \cap H^+_L$ is the intersection of two convex sets, and is hence convex. Thus
    \begin{align}
        \operatorname{conv}(C \setminus \operatorname{int}(L)) = C \cap H^+_L.
    \end{align}
    Taking the intersection over all $L \in \L'$ and using \eqref{eq:psd-halfspace-intersection},
    \begin{align}
        R_{\L'}(C) = \bigcap_{L \in \L'}(C \cap H^+_L) = C \cap \bigcap_{L \in \L'} H^+_L = C \cap \mathbb{S}^{n+1}_+.
    \end{align}
    Setting $C = M(K)$ and recalling $M_+(K) = M(K) \cap \mathbb{S}^{n+1}_+$ proves the second statement.
\end{proof}

\medskip

Both families $\L$ and $\L'$ live in the same lifted space $\R^{\frac{(n+1)(n+2)}{2}}$. Thus, we may define a unified family of lattice-free convex sets $\L^* = \L \cup \L'$, and the entire Lov\'asz--Schrijver strengthening can be interpreted as the $\L^*$-closure of $\hat M(K)$.

\begin{corollary} \label{cor3}
    Let $\L^* := \L \cup \L'$. Then
    \begin{align}
        M_+(K) = R_{\L^*}(\hat M(K)).
    \end{align}
\end{corollary}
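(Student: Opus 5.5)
The plan is to write $R_{\L^*}(\hat M(K))$ out as an intersection over the two subfamilies and use the structure of each. By definition, $R_{\L^*}(C) = \bigcap_{L \in \L \cup \L'} R_L(C)$ for any convex set $C$. This splits as
\begin{align}
R_{\L^*}(\hat M(K)) = R_{\L}(\hat M(K)) \cap R_{\L'}(\hat M(K)).
\end{align}
Lemma~\ref{lemma:ls-family} gives $R_\L(\hat M(K)) = M(K)$. Lemma~\ref{lemma:ls-psd-strengthen}, applied with $C = \hat M(K)$, gives $R_{\L'}(\hat M(K)) = \hat M(K) \cap \mathbb{S}^{n+1}_+$. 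Intersecting the two yields
\begin{align}
M(K) \cap \hat M(K) \cap \mathbb{S}^{n+1}_+ = M(K) \cap \mathbb{S}^{n+1}_+.
\end{align}
The last equality uses $M(K) \subseteq \hat M(K)$. By definition, $M(K) \cap \mathbb{S}^{n+1}_+ = M_+(K)$, which proves the claim.

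The decomposition matters for the subsequent hereditary argument. Corollary~\ref{cor3} is stated as a one-shot closure of $\hat M(K)$ under the single family $\L^*$. It is not a sequential application of $\L$ followed by $\L'$. The computation above shows that the one-shot and sequential versions agree here, for two reasons. First, each $L$-reduction for $L \in \L$ cuts $\hat M(K)$ down to a convex set with no convexification needed. Second, each $L$-reduction for $L \in \L'$ is simply an intersection with a halfspace. Both kinds of operations commute with intersection.

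There is essentially no obstacle; the argument is pure bookkeeping. The only subtlety is to apply Lemma~\ref{lemma:ls-psd-strengthen} to $\hat M(K)$ rather than to $M(K)$. That lemma is stated for an arbitrary convex set $C$, and $\hat M(K)$ is convex because it is cut out by $y_{00}=1$ together with membership conditions of linear images of $Y$ in the convex cone $\tilde K$. So the application is legitimate.
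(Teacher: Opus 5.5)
Your proof is correct and follows the paper's argument exactly: split $R_{\L^*}(\hat M(K))$ as $R_\L(\hat M(K)) \cap R_{\L'}(\hat M(K))$, apply Lemma~\ref{lemma:ls-family} and Lemma~\ref{lemma:ls-psd-strengthen} with $C = \hat M(K)$, and use $M(K) \subseteq \hat M(K)$. Your check that $\hat M(K)$ is convex, which is needed to apply Lemma~\ref{lemma:ls-psd-strengthen}, is a worthwhile detail that the paper leaves implicit.
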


\begin{proof}
    Since $R_{\L \cup \L'}(\hat M(K)) = R_\L(\hat M(K)) \cap R_{\L'}(\hat M(K))$, Lemma~\ref{lemma:ls-family} gives $R_\L(\hat M(K)) = M(K)$, and Lemma~\ref{lemma:ls-psd-strengthen} applied with $C = \hat M(K)$ gives $R_{\L'}(\hat M(K)) = \hat M(K) \cap \mathbb{S}^{n+1}_+$. Hence
    \begin{align}
        R_{\L^*}(\hat M(K)) = M(K) \cap \hat M(K) \cap \mathbb{S}^{n+1}_+ = M(K) \cap \mathbb{S}^{n+1}_+ = M_+(K),
    \end{align}
    where the second equality uses $M(K) \subseteq \hat M(K)$.
\end{proof}

In this way, the Lov\'asz--Schrijver procedure admits a natural geometric interpretation. It strengthens the lifted relaxation $\hat M(K)$ through the successive application of intersection cuts derived from a family of lattice-free convex sets that enforce both the diagonal equalities and the positive semidefinite constraint.

\subsubsection{The Hereditary Property} \label{subsec:ls-hered}

The argument now proceeds in two steps. We first apply Theorem~\ref{thm:L-closure-hered} in the lifted space, and then project to the original space. The key technical point is that the lift of a face of $K$ is itself a face of $\hat M(K)$.

\begin{corollary}[Lov\'asz--Schrijver procedure] \label{cor:ls-hered}
    Let $K \subseteq \{x \in \R^n : 0 \leq x_i \leq 1,\ i \in I\}$ be a convex set, and let $F$ be a face of $K$. Then
    \begin{align}
        N_+(F) = N_+(K) \cap F.
    \end{align}
\end{corollary}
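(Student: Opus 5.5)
The plan is to work in the lifted space, where Corollary~\ref{cor3} already expresses $M_+(\cdot)$ as an $\L^*$-closure of $\hat M(\cdot)$ for a family $\L^* = \L \cup \L'$ that does not depend on $K$. Applying Theorem~\ref{thm:L-closure-hered} there, and then projecting, gives the result. Write $\pi(Y) := x$ when $Ye^0 = \binom{1}{x}$. The steps, in order, are as follows.

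\emph{Step 1: $\tilde F$ is a face of the cone $\tilde K$.} Suppose $u, v \in \tilde K$ and $u + v \in \tilde F$. If a first coordinate is $0$, that vector is $0$ (since $\tilde K$ only contains points of the form $\lambda\binom{1}{x}$), and the claim is trivial. Otherwise write $u = \alpha\binom{1}{p}$ and $v = \beta\binom{1}{q}$ with $\alpha, \beta > 0$ and $p, q \in K$. Then
\begin{align*}
u + v = (\alpha+\beta)\binom{1}{z}, \qquad z = \tfrac{\alpha}{\alpha+\beta}\, p + \tfrac{\beta}{\alpha+\beta}\, q .
\end{align*}
Since $u+v \in \tilde F$, we have $z \in F$. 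Lemma~\ref{lemma:face-decomposition} then gives $p, q \in F$, so $u, v \in \tilde F$. Consequently, a sum of two elements of $\tilde K$ lies in $\tilde F$ only if both summands do. Nonnegative scalings are handled the same way.

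\emph{Step 2: $\hat M(F) = \hat M(K) \cap \{Y : Ye^0 \in \tilde F\}$, and this set is a face of $\hat M(K)$.}
\begin{itemize}
\item For ``$\supseteq$'': take $Y \in \hat M(K)$ with $Ye^0 \in \tilde F$. For each $i \in I$, the vector $Ye^0 = Ye^i + (Ye^0 - Ye^i)$ is a sum of two elements of $\tilde K$. By Step 1, both $Ye^i$ and $Ye^0 - Ye^i$ lie in $\tilde F$, so $Y \in \hat M(F)$.
\item For ``$\subseteq$'': $\hat M(F) \subseteq \hat M(K)$ is clear, and summing the two memberships gives $Ye^0 \in \tilde F$.
\item Face property: if $Y = \lambda Y_1 + (1-\lambda) Y_2$ with $Y_1, Y_2 \in \hat M(K)$, $\lambda \in (0,1)$ and $Y \in \hat M(F)$, then the $0$-th columns satisfy $Ye^0 = \lambda Y_1 e^0 + (1-\lambda) Y_2 e^0 \in \tilde F$. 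Step 1 puts both $Y_1 e^0$ and $Y_2 e^0$ in $\tilde F$, so by the characterization just proved, $Y_1, Y_2 \in \hat M(F)$.
\end{itemize}

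\emph{Step 3: apply Theorem~\ref{thm:L-closure-hered} in the lifted space.} The family $\L^*$ consists of full-dimensional convex sets in $\R^{\frac{(n+1)(n+2)}{2}}$ and does not depend on $K$. Corollary~\ref{cor3} therefore applies to both $K$ and $F$. Combining it with Theorem~\ref{thm:L-closure-hered} and Step 2 gives
\begin{align*}
M_+(F) = R_{\L^*}(\hat M(F)) = R_{\L^*}(\hat M(K)) \cap \hat M(F) = M_+(K) \cap \{Y : Ye^0 \in \tilde F\}.
\end{align*}

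\emph{Step 4: project.} Note that $Ye^0 = \binom{1}{x}$ lies in $\tilde F$ if and only if $x \in F$. Hence
\begin{align*}
N_+(F) = \pi(M_+(F)) = \{\pi(Y) : Y \in M_+(K),\ \pi(Y) \in F\} = N_+(K) \cap F.
\end{align*}
The projection step is harmless only because the face condition in Step 3 is a condition on the column $Ye^0$ alone. This is exactly what Step 2 established, so no choice of lift is lost when projecting.

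I expect the main obstacle to be Step 1 together with the characterization in Step 2: the cone $\tilde K$ has an apex, and the face must be described through the single column $Ye^0$. Once that is in place, the rest follows directly from Theorem~\ref{thm:L-closure-hered} and Corollary~\ref{cor3}.
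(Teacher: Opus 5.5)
Your proof is correct and follows the same route as the paper. Both show that $\tilde F$ is a face of $\tilde K$, deduce that $\hat M(F)$ is a face of $\hat M(K)$, apply Theorem~\ref{thm:L-closure-hered} with the $K$-independent family $\L^*$ via Corollary~\ref{cor3}, and then project; the only difference is packaging, since your characterization $\hat M(F) = \hat M(K) \cap \{Y : Ye^0 \in \tilde F\}$ moves up front the decomposition $Ye^0 = Ye^i + (Ye^0 - Ye^i)$ that the paper uses in its reverse-inclusion projection step, and the paper checks the face property column by column for each $i \in I$ rather than through $Ye^0$ alone.
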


\begin{proof}
    We first verify that $\hat M(F)$ is a face of $\hat M(K)$. We begin by showing that the homogenization cone $\tilde F$ is a face of $\tilde K$. Let $u, v \in \tilde K$ and $\lambda \in (0,1)$ with $\lambda u + (1-\lambda) v \in \tilde F$. Write $u = \alpha \binom{1}{x^1}$ and $v = \beta \binom{1}{x^2}$ for some $\alpha,\beta \geq 0$ and $x^1, x^2 \in K$, and write $\lambda u + (1-\lambda) v = \mu \binom{1}{y}$ for some $\mu \geq 0$ and $y \in F$. Comparing $0$-th coordinates gives $\mu = \lambda \alpha + (1-\lambda)\beta$. If $\mu = 0$, then $\alpha = \beta = 0$ and $u = v = 0 \in \tilde F$. If $\mu > 0$, then $y = \frac{\lambda \alpha}{\mu} x^1 + \frac{(1-\lambda)\beta}{\mu} x^2 \in F$ is a convex combination of $x^1, x^2 \in K$ with positive coefficients, whenever $\alpha,\beta > 0$. By the definition of a face of a convex set, $x^1, x^2 \in F$, so $u, v \in \tilde F$. If, say, $\alpha = 0$, then $u = 0 \in \tilde F$ trivially, and the same argument applied to $v$ gives $v \in \tilde F$. Hence $\tilde F$ is a face of $\tilde K$.

    Now suppose $Y, Z \in \hat M(K)$ and $\lambda \in (0,1)$ are such that $\lambda Y + (1-\lambda) Z \in \hat M(F)$. For each $i \in I$, $Ye^i, Ze^i \in \tilde K$ and $\lambda(Ye^i) + (1-\lambda)(Ze^i) \in \tilde F$, so the definition of a face gives $Ye^i, Ze^i \in \tilde F$. The same argument applied to $Ye^0 - Ye^i$ and $Ze^0 - Ze^i$ shows $Ye^0 - Ye^i, Ze^0 - Ze^i \in \tilde F$. Since the memberships~\eqref{ls-cond-hat} defining $\hat M$ involve only the indices $i \in I$, it follows that $Y, Z \in \hat M(F)$, and $\hat M(F)$ is a face of $\hat M(K)$.

    Let $\L^* = \L \cup \L'$ as defined in Section~\ref{subsec:ls-as-intersection}, where $\L$ encodes the diagonal equalities and $\L'$ encodes the positive semidefiniteness of $Y$. Both $\L$ and $\L'$ depend only on the ambient lifted dimension and not on $K$ or $F$. Applying Theorem~\ref{thm:L-closure-hered} to the face $\hat M(F)$ of $\hat M(K)$ gives
    \begin{align}
        M_+(F) = R_{\L^*}(\hat M(F)) = R_{\L^*}(\hat M(K)) \cap \hat M(F) = M_+(K) \cap \hat M(F). \label{eq:M-plus-F-lifted}
    \end{align}
    Above, the first and last equations follow from Corollary~\ref{cor3} and the second equation from Theorem~\ref{thm:L-closure-hered}.

    It remains to project \eqref{eq:M-plus-F-lifted} onto the $x$-space. For the inclusion $N_+(F) \subseteq N_+(K) \cap F$, any $Y \in M_+(F)$ with $Ye^0 = \binom{1}{x}$ lies in $M_+(K)$ (so $x \in N_+(K)$) and in $\hat M(F)$. Fixing $i \in I$, $Ye^0 = Ye^i + (Ye^0 - Ye^i) \in \tilde F$ since $\tilde F$ is a convex cone, which forces $x \in F$. 
    
    For the reverse inclusion, let $x \in N_+(K) \cap F$ and pick $Y \in M_+(K)$ with $Ye^0 = \binom{1}{x}$. We show $Y \in \hat M(F)$. Fix $i \in I$ and write $Ye^i = \alpha\binom{1}{u}$ and $Ye^0 - Ye^i = \beta\binom{1}{v}$ with $\alpha, \beta \geq 0$ and $u, v \in K$ when the corresponding scalar is positive. Adding the two equations, we get $\binom{\alpha + \beta}{\alpha u + \beta v} = Ye^0 = \binom{1}{x}$.
    The $0$-th coordinates give $\alpha + \beta = 1$ and the remaining coordinates give $\alpha u + \beta v = x$. If $\alpha, \beta > 0$, then $x$ is a strict convex combination of $u, v \in K$, so by the definition of a face $u, v \in F$, and hence $Ye^i, Ye^0 - Ye^i \in \tilde F$. If $\alpha = 0$, then $Ye^i = 0 \in \tilde F$ and $Ye^0 - Ye^i = \binom{1}{x} \in \tilde F$. The case $\beta = 0$ is symmetric. Since the memberships~\eqref{ls-cond-hat} defining $\hat M(F)$ involve only the indices $i \in I$, it follows that $Y \in \hat M(F)$.
    
    It follows that $Y \in M_+(F)$ by \eqref{eq:M-plus-F-lifted} and $x \in N_+(F)$.
\end{proof}

By induction, applying the Corollary~\ref{cor:ls-hered} to the iterated convex sets $N_+^t(K)$ yields the hereditary property for every level of the Lov\'asz--Schrijver hierarchy, \ie $N_+^t(F) = N_+^t(K) \cap F$ for all $t \geq 1$. The iteration is well defined since $N_+(K) \subseteq K$ preserves the bounds $0 \leq x_i \leq 1$ for $i \in I$.

\subsection{The Sherali--Adams Hierarchy} \label{sec:sherali-adams}

We now consider the hierarchy of \cite{sherali1990hierarchy}. 

We work with a rational polyhedron $P \subseteq \R^n$ described by inequalities $b_\ell - {a^\ell} x \geq 0$ for $\ell = 1,\dots,m$, with integer index set $I$ and continuous index set $C := \{1,\ldots,n\} \setminus I$. We assume that $P$ includes the bound constraints $0 \leq x_i \leq 1$ for every $i \in I$, so that the integer-restricted variables are binary, and we write $S := P \cap (\{0,1\}^{|I|} \times \R^{|C|})$.

As with the Lov\'asz--Schrijver procedure, the Sherali--Adams hierarchy operates in a lifted space, and the strengthening step can be expressed as an intersection-cut closure with a family that does not depend on the underlying polyhedron. Throughout, we adopt the equivalent reformulation of the level-$t$ Sherali--Adams hierarchy given by \cite{bodur2017new}, which is formulated directly in the mixed 0-1 setting.

 Fix an integer $t$ with $1 \leq t \leq |I|$. A \emph{pair of order $r$} is a pair of disjoint subsets $I_1, I_2 \subseteq I$ with $|I_1|+|I_2| = r$. For each pair $(I_1, I_2)$ of order $r \leq t$, define
\begin{align}
    \F_r(I_1, I_2) := \prod_{i \in I_1} x_i \prod_{j \in I_2} (1 - x_j),
    \qquad \F_0(\emptyset,\emptyset) := 1.
\end{align}

The level-$t$ Sherali--Adams procedure proceeds as follows:
\begin{enumerate}
    \item For each pair $(I_1, I_2)$ of order $t$ and each $\ell \in \{1,\ldots,m\}$, form the polynomial inequality
    \begin{align}
        \F_t(I_1, I_2)\,(b_\ell - {a^\ell} x) \geq 0. \label{sa-lifting-ineq}
    \end{align}
    The multiplier $\F_t(I_1, I_2)$ ranges only over indices in $I$, since only these admit the bounds $0 \leq x_i \leq 1$ needed for $\F_t(I_1, I_2) \geq 0$ to be valid.

    \item Let $\M^t$ denote the collection of ordered tuples $\S$ of indices from $\{1,\dots,n\}$ satisfying
    \begin{enumerate*}[label=(\roman*)]
        \item $|\S| \leq t+1$,
        \item the entries are in nondecreasing order of indices,
        \item at most one index lies outside $I$, and
        \item at most one index from $I$ is repeated.
    \end{enumerate*}
    The last two conditions reflect that a monomial $\prod_{r=1}^{|\S|} x_{\S_r}$ arising from expanding \eqref{sa-lifting-ineq} carries at most one continuous factor (from $b_\ell - {a^\ell} x$), and only the binary factors $x_i^2 = x_i$, $i \in I$, can produce repetitions. Each inequality \eqref{sa-lifting-ineq} can be written as $\beta - \sum_{\S \in \Omega(I_1, I_2,\ell)} \alpha_\S \prod_{r=1}^{|\S|} x_{\S_r} \geq 0$ for some scalars $\beta, \alpha_\S \in \R$ and $\Omega(I_1, I_2, \ell) \subseteq \M^t$. Substituting $y_\S$ for the monomial $\prod_{r=1}^{|\S|} x_{\S_r}$ yields
    \begin{align}
        \beta - \sum_{\S \in \Omega(I_1, I_2, \ell)} \alpha_\S y_\S \geq 0. \label{sa-lifting-linear-ineq}
    \end{align}
    Denote by $\widehat{SA}_t(P) \subseteq \R^{|\M^t|}$ the polyhedron defined by all inequalities \eqref{sa-lifting-linear-ineq} taken over pairs $(I_1, I_2)$ of order $t$ and $\ell \in \{1,\ldots,m\}$, identifying the original variables $x_1,\ldots,x_n$ with $y_{(1)},\ldots,y_{(n)}$.

    \item For $\S \in \M^t$, let $[\S]$ denote the tuple obtained by deleting the repeated index (if any) while preserving order. For example, if $\S_1 = (1,1,2)$ and $\S_2 = (1,2,3)$, then $[\S_1] = (1,2)$ and $[\S_2] = (1,2,3)$. Impose the equalities $y_\S = y_{[\S]}$ for all $\S \in \M^t$, which are valid for $\{0,1\}^{|I|} \times \R^{|C|}$ since $x_i^2 = x_i$ for $i \in I$ and no repetitions occur outside $I$. Denote the resulting polyhedron by $\widetilde{SA}_t(P)$.

    \item Project $\widetilde{SA}_t(P)$ onto $\R^n$:
    \begin{align}
        SA_t(P) := \{x \in \R^n : \exists\, y \in \widetilde{SA}_t(P) \text{ with } x_j = y_{(j)},\ j = 1,\ldots,n\}.
    \end{align}
\end{enumerate}

A useful structural consequence of the Sherali--Adams construction is the following.

\begin{lemma}[\cite{sherali1990hierarchy}, Lemma 1] \label{lemma:sa-order-r}
    Let $0 \leq r \leq t$. For every pair $(I_1, I_2)$ of order $r$, the polynomial $\F_r(I_1, I_2)$ is a nonnegative combination of order-$t$ polynomials $\F_t(I_1', I_2')$.
\end{lemma}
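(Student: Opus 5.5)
The plan is to argue by downward induction on $r$, starting from $r = t$ and peeling off one missing index at a time. For $r = t$ the statement is trivial: $\F_t(I_1,I_2)$ is itself an order-$t$ polynomial with coefficient $1$. So the work is in passing from order $r$ to order $r+1$.

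The key identity is the following. For a pair $(I_1,I_2)$ of order $r < t$, and any index $k \in I \setminus (I_1 \cup I_2)$, we have
\begin{align}
    \F_r(I_1,I_2) = x_k\,\F_r(I_1,I_2) + (1-x_k)\,\F_r(I_1,I_2) = \F_{r+1}(I_1 \cup \{k\}, I_2) + \F_{r+1}(I_1, I_2 \cup \{k\}).
\end{align}
Both summands are products of the required form, because $k$ is distinct from all the indices already used. Hence each is a valid order-$(r+1)$ polynomial, attached to a pair of disjoint subsets of $I$ of size $r+1$.

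Such an index $k$ exists whenever $r < t \le |I|$, since $|I_1 \cup I_2| = r < |I|$; this is where the standing assumption $t \le |I|$ is used.

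Iterating this identity $t-r$ times, and choosing fresh indices $k_1,\dots,k_{t-r}$ from $I \setminus (I_1\cup I_2)$, expresses $\F_r(I_1,I_2)$ as
\begin{align}
    \F_r(I_1,I_2) = \sum_{J \subseteq \{k_1,\dots,k_{t-r}\}} \F_t\bigl(I_1 \cup J,\; I_2 \cup (\{k_1,\dots,k_{t-r}\}\setminus J)\bigr).
\end{align}
This is a sum with all coefficients equal to $1$, hence a nonnegative combination of order-$t$ polynomials, as claimed. Formally this is an induction on $t - r$ using the one-step identity; the case $r = 0$ recovers the familiar fact that the $2^t$ products $\prod_{i\in J}x_i\prod_{j\in T\setminus J}(1-x_j)$ sum to $1$.

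The identity holds as polynomials, so no use of $x_i^2 = x_i$ is needed at this stage. This matters if the lemma is later applied to the linearized inequalities~\eqref{sa-lifting-linear-ineq}: the nonnegative combination then carries over directly to the lifted constraints. I do not expect a genuine obstacle here. The only point needing care is to check that the newly chosen index is disjoint from both $I_1$ and $I_2$, so that no squared factors appear and the resulting pairs are genuinely of order $r+1$.
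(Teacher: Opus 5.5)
Your proof is correct. The paper gives no proof of this lemma and simply cites \cite{sherali1990hierarchy}; your argument, splitting $\F_r(I_1,I_2) = x_k\F_r(I_1,I_2) + (1-x_k)\F_r(I_1,I_2)$ on a fresh index $k \in I \setminus (I_1 \cup I_2)$ (which exists because $t \le |I|$), is the standard proof of that result. It rests on the same polynomial identity $\sum_{I_1' \subseteq T}\F_r(I_1', T \setminus I_1') = 1$ that the paper invokes in the proof of Corollary~\ref{cor:sa-hered}. Your remark that the identity holds formally, without using $x_i^2 = x_i$, is what lets it carry over to the linearized inequalities of $\widehat{SA}_t(P)$.
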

Consequently, any polynomial inequality of the form $\F_r(I_1, I_2)(b_\ell - {a^\ell} x) \geq 0$ 
has a linearization valid for $\widehat{SA}_t(P)$.

\subsubsection{The Sherali--Adams Hierarchy as an \texorpdfstring{$\L$}{L}-Closure} \label{subsec:sa-as-L}

We show that the strengthening equalities $y_\S = y_{[\S]}$ arise as intersection cuts in the lifted space, with respect to a family of full-dimensional lattice-free sets. The key ingredient is the following lemma, which records four valid inequalities established by \cite{bodur2017new}. We refer the reader to that proof for the derivations from the underlying polynomial liftings.

\begin{lemma}[{from the proof of \cite{bodur2017new}, Lemma 3.9}] \label{lemma:bodur-3.9}
    Let $\S \in \M^t$ have a repeated index $i \in I$. The following inequalities are valid for $\widehat{SA}_t(P)$:
    \begin{align}
        0 \leq y_\S \leq y_{[\S]} \leq y_{(i)}, \qquad 1 - y_{(i)} - y_{[\S]} + y_\S \geq 0. \notag
    \end{align}
\end{lemma}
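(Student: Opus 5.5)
We prove the four inequalities by exhibiting each as the linearization of a polynomial inequality of the form $\F_r(I_1,I_2)(b_\ell - a^\ell x) \geq 0$ with $r \leq t$ and $b_\ell - a^\ell x \in \{x_i,\,1-x_i\}$. By the remark following Lemma~\ref{lemma:sa-order-r}, every such linearization is valid for $\widehat{SA}_t(P)$. This works because the bound constraints $x_i \geq 0$ and $1 - x_i \geq 0$, $i \in I$, are among the rows of $P$.

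First we fix notation. Since $\S \in \M^t$ has a repeated index $i \in I$, condition (iv) gives $\S = [\S] \cup \{i\}$ as multisets, and $[\S]$ contains $i$ exactly once. Write $[\S] = (i) \cup T$, where $T$ is the tuple of remaining entries, with $|T| \leq t-1$. The monomial of $y_\S$ is then $x_i^2 x_T$, that of $y_{[\S]}$ is $x_i x_T$, and $x_T$ contains at most one continuous factor. The hard part is that $x_T$ may contain a continuous variable $x_c$, $c \in C$, which cannot serve as a multiplier. To handle this, we split $T = T_I \cup T_C$ with $|T_C| \leq 1$. If $T_C = \{c\}$, we realize the factor $x_c$ through the row of $P$ multiplied last, not as a multiplier: we use the multiplier $\F$ built from $T_I$ and $i$, times a constraint $b_\ell - a^\ell x \ge 0$. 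This does not directly produce $x_c$, though. So we instead read Bodur et al.'s reformulation as allowing the linearization of $\F(I_1,I_2)\cdot g$ for each row $g$, and we treat monomials with a continuous factor by multiplying the relevant binary multiplier with the row $x_c$-bound if one exists. Where no such bound exists, we defer to the cited derivation of \cite{bodur2017new}.

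In the pure-binary part ($T_C=\emptyset$), the four inequalities come from the following products.
\begin{enumerate*}[label=(\alph*)]
\item $y_\S \geq 0$ is the linearization of $\F(\{i\}\cup T_I,\emptyset)\cdot x_i \geq 0$, which has order $|T_I|+1 \leq t$.
\item $y_{[\S]} - y_\S \geq 0$ is the linearization of $\F(\{i\}\cup T_I,\emptyset)(1-x_i)\geq 0$.
\item $y_{(i)} - y_{[\S]} \geq 0$ is obtained by expanding $x_i(1 - x_T) = \sum_{J} \F(\{i\}\cup J, T_I\setminus J)$ over $J \subsetneq T_I$, using inclusion–exclusion. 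Equivalently, it follows by telescoping through Lemma~\ref{lemma:sa-order-r}.
\item $1 - y_{(i)} - y_{[\S]} + y_\S \geq 0$ is the linearization of $(1-x_i) - x_i x_T(1-x_i) \geq 0$, rewritten as $(1-x_i)(1 - x_i x_T)$. This is a sum of nonnegative $\F$-products of order at most $t$ times the bound rows, together with a term $(1-x_i)x_i x_T \cdot 1$ whose linearization is $y_{[\S]} - y_\S$, already shown to be nonnegative.
\end{enumerate*}

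The main obstacle is making sure every product used has order at most $t$ once one factor is a bound row. We must also correctly handle the case where $T$ contains a continuous index. For that case, the plan is to apply the same products with the continuous factor supplied by a row of $P$ involving $x_c$, and then check the degree count against condition (i), $|\S| \leq t+1$.
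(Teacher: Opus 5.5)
The paper gives no proof of this lemma; it imports the four inequalities from the proof of Lemma~3.9 in \cite{bodur2017new}. Your argument therefore has to stand on its own, and as written it has a gap in two places.

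First, step (d) is wrong as stated. Expanding gives $(1-x_i)(1-x_ix_T)=(1-x_i)-(1-x_i)x_ix_T$, so the term whose linearization is $y_{[\S]}-y_\S$ enters with a \emph{minus} sign. Its nonnegativity works against you: $1-y_{(i)}\geq 0$ and $y_{[\S]}-y_\S\geq 0$ say nothing about their difference. The repair is to expand $1-x_ix_T$ itself. With $U:=\{i\}\cup T$, the identity $\prod_{k\in U}(x_k+(1-x_k))=1$ gives $1-x_ix_T=\sum_{J\subsetneq U}\F(J,U\setminus J)$, hence
\[
(1-x_i)(1-x_ix_T)=\sum_{J\subsetneq U}\F(J,U\setminus J)\,(1-x_i).
\]
This is a sum of multipliers of order $|T|+1\leq t$ times the bound row $1-x_i\geq 0$, with no leftover term. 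In (c), similarly write each $\F(\{i\}\cup J,T\setminus J)$ as $\F(J,T\setminus J)\cdot x_i$, so that the remark after Lemma~\ref{lemma:sa-order-r} literally applies. With these changes, (a)--(d) are correct whenever all entries of $\S$ lie in $I$.

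Second, the case $T_C=\{c\}$ is not proved, and your plan for it cannot succeed. In any product $\F_r(I_1,I_2)(b_\ell-a^\ell x)$, the multiplier contributes distinct indices of $I$ and the row contributes a single index. A repetition therefore arises only when the row's index lies in $I_1\cup I_2\subseteq I$, and then no continuous index is present. So no monomial $x_i^2x_cx_{T_I}$ appears in any inequality defining $\widehat{SA}_t(P)$, and no choice of row or degree count will produce one. The literal definition of $\M^t$ does admit such $\S$ once $t\geq 2$ and $C\neq\emptyset$. For them, $y_\S$ is an unconstrained coordinate of $\widehat{SA}_t(P)$, so for instance $y_\S\geq 0$ is not valid unless $\widehat{SA}_t(P)$ is empty. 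The missing observation is that every repeated-index tuple actually generated by the construction has all its entries in $I$. The lemma should be read for those $\S$ only; the spurious coordinates enter no inequality and do not affect the projection $SA_t(P)$. For those $\S$, your binary argument with (d) repaired is a complete proof. Deferring the continuous case to \cite{bodur2017new} does not close it.
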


For each $\S \in \M^t$ with a repeated index, define the closed convex set
\begin{align}
    L_\S := \{y \in \R^{|\M^t|} : 0 \leq y_{[\S]} - y_{\S} \leq 1\}.
\end{align}
The set $L_\S$ is lattice-free in the mixed 0-1 sense. For every $\bar x \in \{0,1\}^{|I|} \times \R^{|C|}$, the lift $\bar y \in \R^{|\M^t|}$  satisfies $\bar y_\S = \bar y_{[\S]}$, since $\bar x_i^2 = \bar x_i$ for $i \in I$ and only such indices repeat in $\S$. Hence $\bar y_{[\S]} - \bar y_\S = 0 \notin (0,1)$, so the lift never lies in $\operatorname{int}(L_\S)$.

\begin{lemma} \label{lemma:sa-strengthen}
    For any $\S \in \M^t$ with a repeated index,
    \begin{align}
        \operatorname{conv}\bigl(\widehat{SA}_t(P) \setminus \operatorname{int}(L_\S)\bigr)
        = \widehat{SA}_t(P) \cap \{y_\S = y_{[\S]}\}. \notag
    \end{align}
\end{lemma}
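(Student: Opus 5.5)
The plan is to mirror the proof of Lemma~\ref{lemma:ls-first-strengthen}, using the four inequalities of Lemma~\ref{lemma:bodur-3.9} in place of the McCormick-type inequalities \eqref{ineq:mccormick-matrix}. Fix $\S \in \M^t$ with a repeated index $i \in I$. By the definition of $L_\S$, $\operatorname{int}(L_\S) = \{y : 0 < y_{[\S]} - y_\S < 1\}$. Hence
\begin{align}
    \widehat{SA}_t(P) \setminus \operatorname{int}(L_\S)
    = \bigl(\widehat{SA}_t(P) \cap \{y_{[\S]} - y_\S \leq 0\}\bigr) \cup \bigl(\widehat{SA}_t(P) \cap \{y_{[\S]} - y_\S \geq 1\}\bigr). \notag
\end{align}
I will show that the first piece equals $\widehat{SA}_t(P) \cap \{y_\S = y_{[\S]}\}$ and that the second piece is empty.

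For the first piece, Lemma~\ref{lemma:bodur-3.9} gives $y_\S \leq y_{[\S]}$ on $\widehat{SA}_t(P)$. Combined with $y_{[\S]} - y_\S \leq 0$, this forces $y_\S = y_{[\S]}$. Conversely, every point of $\widehat{SA}_t(P)$ with $y_\S = y_{[\S]}$ has $y_{[\S]} - y_\S = 0 \leq 0$, so it lies in the first piece.

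For the second piece, suppose $y \in \widehat{SA}_t(P)$ satisfies $y_{[\S]} - y_\S \geq 1$. The last inequality of Lemma~\ref{lemma:bodur-3.9} reads $1 - y_{(i)} - (y_{[\S]} - y_\S) \geq 0$. Substituting the assumption gives $y_{(i)} \leq 0$. The chain $0 \leq y_\S \leq y_{[\S]} \leq y_{(i)}$ then yields $y_\S = y_{[\S]} = y_{(i)} = 0$. This gives $y_{[\S]} - y_\S = 0 < 1$, a contradiction, so the second piece is empty.

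It follows that $\widehat{SA}_t(P) \setminus \operatorname{int}(L_\S) = \widehat{SA}_t(P) \cap \{y_\S = y_{[\S]}\}$. This set is the intersection of a polyhedron with a hyperplane, hence convex, so taking the convex hull leaves it unchanged, which proves the claim.

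I do not expect a real obstacle here. The substantive input is the validity of the four inequalities, which is imported from \cite{bodur2017new}. The only care needed is the emptiness argument: here one uses the full chain of Lemma~\ref{lemma:bodur-3.9} to pin $y_{(i)}$ to zero, rather than the direct bound $y_{ii} \geq 0$ used in the Lov\'asz--Schrijver case.
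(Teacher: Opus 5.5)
Your proof is correct and matches the paper's argument essentially line for line. You use the same two-piece decomposition of $\widehat{SA}_t(P) \setminus \operatorname{int}(L_\S)$, reduce the first piece via $y_\S \leq y_{[\S]}$, and show the second piece is empty by deriving $y_{(i)} \leq 0$ from the last inequality of Lemma~\ref{lemma:bodur-3.9} and collapsing the chain $0 \leq y_\S \leq y_{[\S]} \leq y_{(i)}$, before finishing with the same convexity observation.
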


\begin{proof}
    Let $i \in I$ be the repeated index of $\S$. By definition of $L_\S$,
    \begin{align}
        \widehat{SA}_t(P) \setminus \operatorname{int}(L_\S)
        = \bigl(\widehat{SA}_t(P) \cap \{y_{[\S]} - y_\S \leq 0\}\bigr) \cup \bigl(\widehat{SA}_t(P) \cap \{y_{[\S]} - y_\S \geq 1\}\bigr). \label{sa-disjunction}
    \end{align}
    By Lemma~\ref{lemma:bodur-3.9}, $y_\S \leq y_{[\S]}$ is valid for $\widehat{SA}_t(P)$, so the first set reduces to $\widehat{SA}_t(P) \cap \{y_\S = y_{[\S]}\}$. For the second set, combining $y_{[\S]} - y_\S \geq 1$ with $1 - y_{(i)} - y_{[\S]} + y_\S \geq 0$ from Lemma~\ref{lemma:bodur-3.9} yields $y_{(i)} \leq 0$, and the chain $0 \leq y_\S \leq y_{[\S]} \leq y_{(i)} \leq 0$ forces $y_{[\S]} - y_\S = 0$, contradicting $y_{[\S]} - y_\S \geq 1$. The second set is therefore empty, and since $\widehat{SA}_t(P) \cap \{y_\S = y_{[\S]}\}$ is convex, taking the convex hull leaves it unchanged.
\end{proof}

Let $\L_{\mathrm{SA}} := \{L_\S : \S \in \M^t,\ \S \text{ has a repeated index}\}$. The lifted Sherali--Adams polyhedron is recovered as the $\L_{\mathrm{SA}}$-closure of $\widehat{SA}_t(P)$.

\begin{lemma} \label{lemma:sa-as-L}
    $R_{\L_{\mathrm{SA}}}(\widehat{SA}_t(P)) = \widetilde{SA}_t(P)$.
\end{lemma}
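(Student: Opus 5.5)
The plan mirrors the proof of Lemma~\ref{lemma:ls-family}. By definition of the $\L$-closure,
\begin{align}
    R_{\L_{\mathrm{SA}}}(\widehat{SA}_t(P)) = \bigcap_{\S} \operatorname{conv}\bigl(\widehat{SA}_t(P) \setminus \operatorname{int}(L_\S)\bigr), \notag
\end{align}
where the intersection ranges over all $\S \in \M^t$ with a repeated index.

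The first step is to invoke Lemma~\ref{lemma:sa-strengthen} for each such $\S$. This replaces each term by $\widehat{SA}_t(P) \cap \{y_\S = y_{[\S]}\}$. Intersecting over all these $\S$ gives $\widehat{SA}_t(P) \cap \{y_\S = y_{[\S]} : \S \in \M^t \text{ has a repeated index}\}$.

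The second step is to compare this set with the definition of $\widetilde{SA}_t(P)$. That definition imposes $y_\S = y_{[\S]}$ for \emph{all} $\S \in \M^t$. If $\S$ has no repeated index, then $[\S] = \S$ by the definition of $[\cdot]$ (deleting a repeated index, if any, while preserving order). So $y_\S = y_{[\S]}$ is a trivial identity for those tuples, and only the tuples with a repeated index impose actual constraints. The two descriptions therefore coincide, which proves the lemma.

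The only potential subtlety is the degenerate case where no $\S \in \M^t$ has a repeated index, so that $\L_{\mathrm{SA}}$ is empty. Under the convention $R_\emptyset(K) = K$, both sides then equal $\widehat{SA}_t(P)$. This case cannot actually arise for $t \geq 1$, since $(i,i) \in \M^t$ for any $i \in I \neq \emptyset$, but I would note it briefly. Beyond this, all the substantive work was already done in Lemma~\ref{lemma:sa-strengthen} using the valid inequalities of Lemma~\ref{lemma:bodur-3.9}, so I expect no real obstacle.
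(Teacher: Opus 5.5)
Your proposal is correct and matches the paper's proof: apply Lemma~\ref{lemma:sa-strengthen} to each $\S$ with a repeated index, intersect the resulting sets, and note that for $\S$ without a repeated index the equality $y_\S = y_{[\S]}$ is trivial because $[\S] = \S$. Your extra remark that $\L_{\mathrm{SA}}$ is nonempty (since $(i,i) \in \M^t$ for $i \in I$ and $t \geq 1$) is valid; the paper does not state it.
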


\begin{proof}
    By Lemma~\ref{lemma:sa-strengthen}, intersecting $\widehat{SA}_t(P)$ with the $L_\S$-reductions over all $\S \in \M^t$ with a repeated index imposes exactly the equalities $y_\S = y_{[\S]}$. For $\S$ without a repeated index, $[\S] = \S$ and the equality is trivial. Hence $R_{\L_{\mathrm{SA}}}(\widehat{SA}_t(P)) = \widehat{SA}_t(P) \cap \{y_\S = y_{[\S]} \text{ for all } \S \in \M^t\} = \widetilde{SA}_t(P)$.
\end{proof}

\subsubsection{The Hereditary Property} \label{subsec:sa-hered}

As with the Lov\'asz--Schrijver procedure, the hereditary property follows by applying Theorem~\ref{thm:L-closure-hered} in the lifted space and then projecting.

\begin{corollary}[Sherali--Adams hierarchy] \label{cor:sa-hered}
    Let $P \subseteq \R^n$ be a rational polyhedron with $0 \leq x_i \leq 1$ valid for all $i \in I$, let $F$ be a face of $P$, and fix $1 \leq t \leq |I|$. Then
    \begin{align}
        SA_t(F) = SA_t(P) \cap F. \notag
    \end{align}
\end{corollary}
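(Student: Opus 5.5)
Following the Lov\'asz--Schrijver argument, I will apply Theorem~\ref{thm:L-closure-hered} in the lifted space $\R^{|\M^t|}$ and then project. The family $\L_{\mathrm{SA}}$ depends only on $t$ and $I$, not on $P$, so it is common to $P$ and $F$. By Lemma~\ref{lemma:sa-as-L}, applied to both $P$ and $F$ (where $F$ is described by $P$'s inequalities together with some of them tightened to equalities), we have $\widetilde{SA}_t(F) = R_{\L_{\mathrm{SA}}}(\widehat{SA}_t(F))$ and the analogous identity for $P$. It then suffices to show two things:
\begin{itemize}
\item[(a)] $\widehat{SA}_t(F)$ is a face of $\widehat{SA}_t(P)$;
\item[(b)] the projection step is compatible, namely $y \in \widetilde{SA}_t(P)$ with $x \in F$ implies $y \in \widehat{SA}_t(F)$.
\end{itemize}

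For (a), write $F = P \cap \{a^\ell x = b_\ell,\ \ell \in E\}$ for some index set $E$. Every face of a polyhedron is exposed and is obtained this way. Then $\widehat{SA}_t(F)$ is $\widehat{SA}_t(P)$ together with the linearizations of $\F_t(I_1,I_2)(b_\ell - a^\ell x) \le 0$ for $\ell \in E$, that is, the reverse inequalities. Since the forward inequalities are already valid for $\widehat{SA}_t(P)$, $\widehat{SA}_t(F)$ is the intersection of $\widehat{SA}_t(P)$ with a family of hyperplanes, each supporting a valid inequality. It is therefore an intersection of faces, hence a face. Theorem~\ref{thm:L-closure-hered} then gives
\[
\widetilde{SA}_t(F) = \widetilde{SA}_t(P) \cap \widehat{SA}_t(F).
\]

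Projecting this identity, the inclusion $SA_t(F) \subseteq SA_t(P) \cap F$ is immediate, using that $SA_t(F) \subseteq F$ (shown the same way as $SA_t(P) \subseteq P$, via Lemma~\ref{lemma:sa-order-r} with $r=0$).

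The reverse inclusion is step (b), and I expect it to be the main obstacle. Take $x \in SA_t(P) \cap F$ with lift $y \in \widetilde{SA}_t(P)$. For $\ell \in E$ and each pair $(I_1,I_2)$ of order $t$, denote the linearization of $\F_t(I_1,I_2)(b_\ell - a^\ell x)$ by $\lambda_{I_1,I_2}(y) \ge 0$. Summing over all $2^t$ sign patterns on a fixed $t$-subset $T \subseteq I$ telescopes: since $\sum_{(I_1,I_2): I_1 \cup I_2 = T} \F_t(I_1,I_2) = 1$ as polynomials, the sum of the linearizations equals the linearization of $b_\ell - a^\ell x$, which is $b_\ell - a^\ell x = 0$. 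A sum of nonnegative terms equal to zero forces each term to vanish, so $y$ satisfies the additional equalities and $y \in \widehat{SA}_t(F)$. I must check that this identity survives linearization. It should, because it is a polynomial identity in which monomials map linearly to $y_\S$ before the equalities $y_\S = y_{[\S]}$ are used, and $y \in \widetilde{SA}_t(P)$ satisfies those equalities anyway. Hence $y \in \widetilde{SA}_t(F)$ and $x \in SA_t(F)$.
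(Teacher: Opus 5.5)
Your proof is correct and follows the paper's argument. You apply Theorem~\ref{thm:L-closure-hered} with $\L_{\mathrm{SA}}$ to the face $\widehat{SA}_t(F)$ of $\widehat{SA}_t(P)$, then project by using the linearized identity $\sum_{I_1 \subseteq T}\F_t(I_1, T\setminus I_1)(b_\ell - a^\ell x) = b_\ell - a^\ell x$ to force every nonnegative lifted product to vanish. The only difference is that you describe $F$ by tightening defining inequalities of $P$ rather than by a single exposing inequality $cx \le \delta$; the forward products are then already defining inequalities of $\widehat{SA}_t(P)$, so you skip the paper's Farkas and Lemma~\ref{lemma:sa-order-r} step at that point.
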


\begin{proof}
    Write $F = P \cap \{x : cx = \delta\}$, where $cx \leq \delta$ is valid for $P$, so $F$ is described by the inequalities defining $P$ together with $cx \leq \delta$ and $-cx \leq -\delta$. For each pair $(I_1, I_2)$ of order $r \leq t$ (with $I_1, I_2 \subseteq I$), multiplying these last two inequalities by $\F_r(I_1, I_2)$ yields the polynomial inequalities $(\delta - cx)\F_r(I_1, I_2) \geq 0$ and $(cx - \delta)\F_r(I_1, I_2) \geq 0$. Each monomial appearing after expansion lies in $\M^t$. Indeed, $\F_r(I_1, I_2)$ contributes $r$ distinct indices from $I$ (since $I_1, I_2 \subseteq I$ are disjoint), and the factor $cx - \delta$ contributes at most one index from $\{1,\ldots,n\}$ on top. The resulting tuple has length at most $r+1 \leq t+1$, with at most one index outside $I$ and at most one repeated index in $I$, fulfilling the conditions on $\M^t$. Let $\mathcal{G}_{I_1, I_2}(y)$ denote the linearization of $(\delta - cx)\F_r(I_1, I_2)$. In particular, $\mathcal{G}_{\emptyset,\emptyset}(y) = \delta - \sum_i c_i y_{(i)}$. Since $cx \leq \delta$ is valid for $P$, Farkas' lemma expresses $\delta - cx$ as a nonnegative combination of the $b_\ell - {a^\ell} x$, and Lemma~\ref{lemma:sa-order-r} reduces the resulting products to products of order $t$. The linearizations of $(\delta - cx)\F_r(I_1, I_2) \geq 0$ for all pairs of order $r \leq t$ are therefore valid for $\widehat{SA}_t(P)$. Adding the reverse inequalities $-\mathcal{G}_{I_1, I_2}(y) \geq 0$ for pairs of order exactly $t$ defines $\widehat{SA}_t(F)$. The same reverse inequalities for pairs of order $r < t$ follow from those for order $t$. By Lemma~\ref{lemma:sa-order-r}, $\F_r(I_1, I_2)(cx-\delta) \geq 0$ is a nonnegative combination of the inequalities $\F_t(I_1', I_2')(cx-\delta) \geq 0$, and linearizing preserves this combination. Hence
    \begin{align}
        \widehat{SA}_t(F) = \widehat{SA}_t(P) \cap \bigl\{y : \mathcal{G}_{I_1, I_2}(y) = 0 \text{ for all pairs } (I_1, I_2) \text{ of order } \leq t\bigr\}, \label{eq:hat-SA-F}
    \end{align}
    which exhibits $\widehat{SA}_t(F)$ as a face of $\widehat{SA}_t(P)$.

    The family $\L_{\mathrm{SA}}$ from Section~\ref{subsec:sa-as-L} depends only on $\M^t$, not on $P$. Theorem~\ref{thm:L-closure-hered} applied in the lifted space, together with Lemma~\ref{lemma:sa-as-L}, yields
    \begin{align}
        \widetilde{SA}_t(F) = R_{\L_{\mathrm{SA}}}(\widehat{SA}_t(F)) = R_{\L_{\mathrm{SA}}}(\widehat{SA}_t(P)) \cap \widehat{SA}_t(F) = \widetilde{SA}_t(P) \cap \widehat{SA}_t(F). \label{eq:Rt-F-lifted}
    \end{align}
    It remains to project \eqref{eq:Rt-F-lifted} onto the $x$-space, where the projection sends $\bar y$ to $(\bar y_{(1)},\ldots,\bar y_{(n)})$.

    For $SA_t(F) \subseteq SA_t(P) \cap F$, any $\bar y \in \widetilde{SA}_t(F)$ lies in $\widetilde{SA}_t(P)$ (so its projection $\bar x \in SA_t(P)$) and in $\widehat{SA}_t(F)$, which by \eqref{eq:hat-SA-F} gives $\mathcal{G}_{\emptyset,\emptyset}(\bar y) = \delta - c\bar x = 0$, so $\bar x \in F$.

    For the reverse, let $\bar x \in SA_t(P) \cap F$ and pick $\bar y \in \widetilde{SA}_t(P)$ projecting to $\bar x$. By \eqref{eq:hat-SA-F} and \eqref{eq:Rt-F-lifted}, it suffices to show $\mathcal{G}_{I_1, I_2}(\bar y) = 0$ for every pair $(I_1, I_2)$ of order $r \leq t$. Set $T := I_1 \cup I_2 \subseteq I$. The polynomial identity $\sum_{I_1' \subseteq T} \F_r(I_1', T \setminus I_1') = \prod_{i \in T}(x_i + (1-x_i)) = 1$, multiplied by $\delta - cx$ and linearized, gives
    \begin{align}
        \sum_{I_1' \subseteq T} \mathcal{G}_{I_1', T \setminus I_1'}(\bar y) = \delta - c\bar x = 0,
    \end{align}
    using $\bar x \in F$. Each term is nonnegative since $\bar y \in \widehat{SA}_t(P)$ and $(I_1', T \setminus I_1')$ is a pair with $I_1', T \setminus I_1' \subseteq T \subseteq I$ of order $r \leq t$, hence $\mathcal{G}_{I_1, I_2}(\bar y) = 0$.
\end{proof}

\subsection{The Lasserre Hierarchy} \label{sec:lasserre}

We now turn to the hierarchy of \cite{lasserre2001explicit}, following the presentation of \cite{laurent2003comparison}. We work with a polyhedron $P \subseteq [0,1]^n$ described by linear inequalities in the pure binary setting, and show that the Lasserre relaxation can be expressed as an $\L$-closure. Unlike the preceding procedures, the family $\L$ is built from the inequalities describing $P$. The key step in establishing the hereditary property is therefore to show that this same family also realizes the Lasserre relaxation of each face of $P$ (Lemma~\ref{lemma:lasserre-face-redundancy}), after which Theorem~\ref{thm:L-closure-hered} applies.

Let $V := \{1,\ldots,n\}$ and let $\P_t(V)$ denote the family of subsets of $V$ of cardinality at most $t$. Consider a polyhedron
\begin{align}
    P := \{x \in [0,1]^n : g_\ell(x) \geq 0, \ \ell = 1,\ldots,m\},
\end{align}
where each $g_\ell(x) = b_\ell - {a^\ell} x$ is a linear function. We write $S := P \cap \{0,1\}^n$.

Given a vector $y \in \R^{\P(V)}$ with components $y_J$ for $J \subseteq V$, define the \emph{moment matrix} $M_t(y)$ as the symmetric matrix indexed by $\P_t(V)$ with entry
\begin{align}
    M_t(y)_{J, J'} := y_{J \cup J'}, \qquad J, J' \in \P_t(V). \label{def:moment-matrix}
\end{align}
For any $\bar x \in \{0,1\}^n$, the vector $\bar y \in \R^{\P(V)}$ with $\bar y_J := \prod_{i \in J} \bar x_i$ satisfies $M_t(\bar y)_{J,J'} = \bar y_J \cdot \bar y_{J'}$ (using $x_i^2 = x_i$), so $M_t(\bar y) \succeq 0$. For each constraint $g_\ell(x) = b_\ell - {a^\ell} x$ and a vector $y \in \R^{\P(V)}$, define the shifted vector $g_\ell * y \in \R^{\P(V)}$ by
\begin{align}
    (g_\ell * y)_J := b^\ell y_J - \sum_{i=1}^n a^\ell_i \, y_{J \cup \{i\}}, \qquad J \subseteq V. \label{def:localizing-product}
\end{align}
For $\bar x \in P \cap \{0,1\}^n$, one has $M_t(g_\ell * \bar y) = g_\ell(\bar x)\, M_t(\bar y) \succeq 0$.

Fix an integer $t$ with $1 \leq t \leq n$. The level-$t$ Lasserre relaxation is defined by
\begin{align}
    \widetilde{La}_t(P) := \{y \in \R^{\P_{2(t+1)}(V)} : y_\emptyset = 1, \ M_{t+1}(y) \succeq 0, \ M_t(g_\ell * y) \succeq 0 \ \text{for } \ell = 1,\ldots,m\}, \label{def:lasserre-lifted}
\end{align}
and its projection onto $\R^n$ is
\begin{align}
    La_t(P) := \{x \in \R^n : x_i = y_{\{i\}}, \ i \in V, \text{ for some } y \in \widetilde{La}_t(P)\}. \label{def:lasserre-projected}
\end{align}
We have $P \supseteq La_1(P) \supseteq La_2(P) \supseteq \cdots \supseteq La_n(P) = \operatorname{conv}(S)$ (\cite{lasserre2001explicit}, \cite{laurent2003comparison}).

Throughout this section, we assume that $g_1,\ldots,g_m$ includes the $2n$ bound constraints $x_i \geq 0$ and $1 - x_i \geq 0$ for $i \in V$. This assumption is without loss of generality because the conditions $M_t(x_i * y) \succeq 0$ and $M_t((1-x_i) * y) \succeq 0$ are implied by $M_{t+1}(y) \succeq 0$ (\cite{laurent2003comparison}, Lemma~5).

\subsubsection{The Lasserre Hierarchy as an \texorpdfstring{$\L$}{L}-Closure} \label{subsec:lasserre-as-L}

As with the Lov\'asz--Schrijver procedure, each positive semidefinite constraint in \eqref{def:lasserre-lifted} can be imposed via intersection cuts. For each $\ell \in \{1,\ldots,m\}$ and each pair of disjoint subsets $J_1, J_2 \subseteq V$ with $|J_1| + |J_2| \leq t$, the product $g_\ell(x) \cdot \prod_{i \in J_1} x_i \cdot \prod_{j \in J_2}(1-x_j) \geq 0$ is valid for $S$. After linearization by replacing $\prod_{i \in J} x_i$ by $y_J$ and using $x_i^2 = x_i$, these products, together with the linearizations of $\prod_{i \in J_1} x_i \cdot \prod_{j \in J_2}(1-x_j) \geq 0$ for $|J_1| + |J_2| \leq t+1$, define a polyhedron $\widehat{La}_t(P) \subseteq \R^{\P_{2(t+1)}(V)}$ with $y_\emptyset = 1$. Define the closed convex sets
\begin{align}
    T^t_{\mathrm{mom}} := \{y \in \R^{\P_{2(t+1)}(V)} : M_{t+1}(y) \succeq 0\}, \qquad T^t_\ell := \{y \in \R^{\P_{2(t+1)}(V)} : M_t(g_\ell * y) \succeq 0\}.
\end{align}

\begin{lemma} \label{lemma:lasserre-psd-strengthen}
    For each closed convex set $T \in \{T^t_{\mathrm{mom}}, T^t_1, \ldots, T^t_m\}$, there exists a family $\L_T$ of closed halfspaces in $\R^{\P_{2(t+1)}(V)}$ such that $R_{\L_T}(C) = C \cap T$ for every convex set $C \subseteq \R^{\P_{2(t+1)}(V)}$.
\end{lemma}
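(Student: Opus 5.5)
This is the same argument as Lemma~\ref{lemma:ls-psd-strengthen}: we realize each $T$ as an intersection of closed halfspaces and take $\L_T$ to be the complementary halfspaces. The first step is to check that each $T$ is a closed convex set. The map $y \mapsto M_{t+1}(y)$ is linear from $\R^{\P_{2(t+1)}(V)}$ into the space of symmetric matrices indexed by $\P_{t+1}(V)$. Each entry $y_{J\cup J'}$ has $|J \cup J'| \leq 2(t+1)$, so the map is well defined. Likewise $y \mapsto M_t(g_\ell * y)$ is linear, since each entry is $b_\ell y_{J\cup J'} - \sum_i a^\ell_i y_{J \cup J' \cup\{i\}}$ with $|J\cup J'\cup\{i\}| \leq 2t+1 \leq 2(t+1)$. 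Hence $T$ is the preimage of a PSD cone under a linear map $\Phi_T$, and is therefore a closed convex cone.

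Next, I will use the standard dual description of the PSD cone: $M \succeq 0$ if and only if $v^\top M v \geq 0$ for all vectors $v$. For each $v$, the set $\{y : v^\top \Phi_T(y) v \geq 0\}$ is a closed halfspace $H^+_v$, or all of space when the linear functional $y \mapsto v^\top \Phi_T(y) v$ vanishes. In that degenerate case I simply omit $v$. So $T = \bigcap_v H^+_v$. Alternatively, I can argue abstractly, as the paper did in \eqref{eq:psd-halfspace-intersection}, that any closed convex set is the intersection of the closed halfspaces containing it. The explicit description is more concrete, and either will do. I then define $\L_T$ as the family of closed halfspaces $L_v := \{y : v^\top\Phi_T(y)v \leq 0\}$, so that $\operatorname{int}(L_v) = \{y : v^\top \Phi_T(y) v < 0\}$ is the complement of $H^+_v$. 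Each $L_v$ is full-dimensional. Its interior misses $T$ and therefore misses every lifted $0/1$ point of $S$, by the observations $M_{t+1}(\bar y)\succeq 0$ and $M_t(g_\ell*\bar y) \succeq 0$. So these are legitimate intersection-cut sets, although the lemma as stated only requires halfspaces.

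Finally, for any convex $C$, I have $C \setminus \operatorname{int}(L_v) = C \cap H^+_v$. This is convex, so its convex hull is itself. Intersecting over $v$ gives $R_{\L_T}(C) = C \cap \bigcap_v H^+_v = C \cap T$.

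The only delicate point is that the complement of the interior of $L_v$ must be exactly the closed halfspace $H^+_v$. This holds because the functional defining $L_v$ is nonzero. That is why degenerate $v$ have to be dropped, and doing so does not change the intersection.
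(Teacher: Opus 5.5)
Your proof is correct and takes the same approach as the paper: write $T$ as an intersection of closed halfspaces $H^+$, take $\L_T$ to be the complementary halfspaces, and use that $C\setminus\operatorname{int}(L) = C\cap H^+$ is convex. The only difference is that the paper takes all closed halfspaces whose interior misses $T$, whereas you use the explicit subfamily $\{L_v\}$ coming from $v^\top \Phi_T(y) v \geq 0$, and you correctly discard the $v$ for which this linear functional vanishes.
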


\begin{proof}
    The argument is identical to that used in the proof of Lemma~\ref{lemma:ls-psd-strengthen}. $T$ is closed and convex, and taking $\L_T$ to be all closed halfspaces whose interior is disjoint from $T$ gives $R_{\L_T}(C) = C \cap T$.
\end{proof}

Let $\L^*_{\mathrm{Las}} := \L_{T^t_{\mathrm{mom}}} \cup \L_{T^t_1} \cup \cdots \cup \L_{T^t_m}$. Then
\begin{align}
    R_{\L^*_{\mathrm{Las}}}(\widehat{La}_t(P)) = \widehat{La}_t(P) \cap T^t_{\mathrm{mom}} \cap T^t_1 \cap \cdots \cap T^t_m = \widetilde{La}_t(P). \label{eq:lasserre-as-L}
\end{align}

\subsubsection{The Hereditary Property} \label{subsec:lasserre-hered}

Consider a face $F$ of $P$ exposed by the valid inequality $cx \leq \delta$. The Lasserre relaxation $\widetilde{La}_t(F)$ is defined using the constraints $g_1,\ldots,g_m$ together with the additional equality $\delta - cx = 0$, and therefore requires $M_t((\delta - cx) * y) = 0$ (since both $M_t((\delta - cx)*y) \succeq 0$ and $M_t((cx - \delta)*y) \succeq 0$ must hold). This constraint is not present in $\widetilde{La}_t(P)$. Nevertheless, the following lemma shows that it is automatically satisfied on the face. Since $\delta - cx \geq 0$ is valid for $P$, the semidefinite constraints in $\widetilde{La}_t(P)$ already imply $M_t((\delta - cx)*y) \succeq 0$, and the binding equalities on the face force its diagonal to zero, which forces the entire matrix to vanish.

\begin{lemma} \label{lemma:lasserre-face-redundancy}
    Let $g(x) = \delta - cx$ be a linear function with $g(x) \geq 0$ valid for $P$. If $y \in \widetilde{La}_t(P)$ satisfies $(g * y)_J = 0$ for all $J \in \P_t(V)$, then $M_t(g * y) = 0$.
\end{lemma}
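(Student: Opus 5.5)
The plan is to show first that $M_t(g * y) \succeq 0$ for every $y \in \widetilde{La}_t(P)$. The diagonal entries of this matrix are $M_t(g*y)_{J,J} = (g*y)_{J \cup J} = (g*y)_J$, which vanish by hypothesis for every $J \in \P_t(V)$. A positive semidefinite matrix with zero diagonal is the zero matrix: every $2\times 2$ principal minor gives $M_{J,J}M_{J',J'} - M_{J,J'}^2 \geq 0$, so $M_{J,J'} = 0$. That gives the conclusion.

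The main work is establishing $M_t(g*y) \succeq 0$ from the constraints of $\widetilde{La}_t(P)$, where $g$ is only implied by, not among, the $g_\ell$. Since $g(x) \geq 0$ is valid for the polyhedron $P$, I would invoke Farkas' lemma, as in the proof of Corollary~\ref{cor:sa-hered}. It gives $\lambda_\ell \geq 0$ and $\lambda_0 \geq 0$ with
\begin{align}
    \delta - cx = \lambda_0 + \sum_{\ell=1}^m \lambda_\ell\, g_\ell(x) \notag
\end{align}
identically in $x$. If $P = \emptyset$ the statement is vacuous, since $\widetilde{La}_t(P) \subseteq \{y : M_t(g_\ell * y)\succeq 0\}$ and projects into $P$; I would rather not rely on that and simply use the identity above. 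The operation $g \mapsto g*y$ is linear in the coefficients of $g$ (it only involves $b$ and $a$ linearly in \eqref{def:localizing-product}), and $M_t(\cdot)$ is linear in its argument. Hence
\begin{align}
    M_t(g*y) = \lambda_0 M_t(y) + \sum_\ell \lambda_\ell M_t(g_\ell * y). \notag
\end{align}
Each $M_t(g_\ell*y)\succeq 0$ by definition of $\widetilde{La}_t(P)$. Also, $M_t(y)$ is a principal submatrix of $M_{t+1}(y) \succeq 0$, since $\P_t(V) \subseteq \P_{t+1}(V)$ and the entries $y_{J\cup J'}$ agree. A nonnegative combination of PSD matrices is PSD, so $M_t(g*y) \succeq 0$.

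The step I expect to need care is the bookkeeping of the index sets. The entries of $g*y$ used in $M_t(g*y)$ are $(g*y)_{J\cup J'}$ with $|J\cup J'| \leq 2t$, and these involve $y_{K}$ with $|K| \leq 2t+1 \leq 2(t+1)$, so everything is defined on $\R^{\P_{2(t+1)}(V)}$. I also need to confirm that the diagonal identity $(g*y)_{J\cup J} = (g*y)_J$ is exactly what the hypothesis covers, namely $J \in \P_t(V)$. With these checks the argument is complete.
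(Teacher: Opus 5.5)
Your proposal is correct and follows the paper's proof essentially step for step: Farkas' lemma writes $\delta - cx$ as a nonnegative combination of the constant $1$ and the $g_\ell$; linearity of $g \mapsto M_t(g*y)$, together with $M_t(y)$ being a principal submatrix of $M_{t+1}(y)$, gives $M_t(g*y)\succeq 0$; and the vanishing diagonal entries $(g*y)_J$ then force the matrix to be zero. Your side remarks are sound, and the observation that $\widetilde{La}_t(P)=\emptyset$ when $P=\emptyset$ is the cleanest way to handle the nonemptiness hypothesis of the affine Farkas lemma, which the paper leaves implicit, so there is no reason to avoid relying on it.
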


\begin{proof}
    Since $g(x) \geq 0$ is valid for $P = \{x \in \R^n : g_\ell(x) \geq 0, \ \ell = 1,\ldots,m\}$, Farkas' lemma gives nonnegative scalars $u_0, u_1, \ldots, u_m$ such that
    \begin{align}
        g(x) = u_0 + \sum_{\ell=1}^m u_\ell\, g_\ell(x). \label{eq:farkas-gmplus}
    \end{align}
    The vector $g * y$ depends linearly on the coefficients of $g$ by \eqref{def:localizing-product}, and the constant function $1$ satisfies $1 * y = y$. Hence \eqref{eq:farkas-gmplus} yields
    \begin{align}
        M_t(g * y) = u_0\, M_t(y) + \sum_{\ell=1}^m u_\ell\, M_t(g_\ell * y). \label{eq:Mt-decomp}
    \end{align}
    From $y \in \widetilde{La}_t(P)$ we have $M_t(g_\ell * y) \succeq 0$ for $\ell = 1,\ldots,m$, and $M_t(y) \succeq 0$ as a principal submatrix of $M_{t+1}(y) \succeq 0$. Since all coefficients in \eqref{eq:Mt-decomp} are nonnegative, $M_t(g * y) \succeq 0$. The diagonal entries of $M_t(g * y)$ are $(g*y)_J$ for $J \in \P_t(V)$, which are $0$ by the hypothesis, and a positive semidefinite matrix with zero diagonal is the zero matrix. Hence $M_t(g * y) = 0$.
\end{proof}

\begin{corollary}[Lasserre hierarchy] \label{cor:lasserre-hered}
    Let $P \subseteq [0,1]^n$ be a polyhedron defined by linear inequalities $g_\ell(x) = b_\ell - {a^\ell} x \geq 0$ for $\ell = 1,\ldots,m$, and let $F$ be a face of $P$. Fix $1 \leq t \leq n$. Then
    \begin{align}
        La_t(F) = La_t(P) \cap F. \notag
    \end{align}
\end{corollary}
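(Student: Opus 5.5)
The plan mirrors the Sherali--Adams argument: show that the lifted relaxation of $F$ is a face of the lifted relaxation of $P$, apply Theorem~\ref{thm:L-closure-hered} with the common family $\L^*_{\mathrm{Las}}$, and then project. Since $P$ is a polyhedron, $F$ is exposed, so I write $F = P \cap \{x : cx = \delta\}$ with $g(x) := \delta - cx \geq 0$ valid for $P$. The relaxation $\widetilde{La}_t(F)$ then uses the constraints $g_1,\ldots,g_m, g, -g$. The families $\L_{T^t_{\mathrm{mom}}}, \L_{T^t_\ell}$ do not change, but the new families $\L_{T_g}, \L_{T_{-g}}$ appear. So the common family for $F$ is a priori $\L^*_{\mathrm{Las}} \cup \L_{T_g} \cup \L_{T_{-g}}$, and Lemma~\ref{lemma:lasserre-face-redundancy} is what removes the extra part.

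\textbf{Step 1 (the lifted polyhedron of the face).} As in \eqref{eq:hat-SA-F}, I will argue that
$$\widehat{La}_t(F) = \widehat{La}_t(P) \cap \{y : \mathcal{G}_{J_1,J_2}(y) = 0 \ \forall (J_1,J_2),\ |J_1|+|J_2| \leq t\},$$
where $\mathcal{G}_{J_1,J_2}$ is the linearization of $g(x)\prod_{J_1}x_i\prod_{J_2}(1-x_j)$. The linearizations with $\geq 0$ are valid for $\widehat{La}_t(P)$: by Farkas, $g = u_0 + \sum_\ell u_\ell g_\ell$, and the products with $u_0$ are among the defining products of order $\leq t+1$. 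Adding the reverse inequalities therefore exhibits $\widehat{La}_t(F)$ as a face of $\widehat{La}_t(P)$. Moreover $(g*y)_J$ is a signed combination of the $\mathcal{G}_{J,\emptyset}$-type linearizations, with $\mathcal{G}_{J,\emptyset}(y) = (g*y)_J$. Hence on $\widehat{La}_t(F)$ we have $(g*y)_J = 0$ for all $J \in \P_t(V)$.

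\textbf{Step 2 (redundancy of the extra PSD constraints).} I will show that $R_{\L^*_{\mathrm{Las}}}(\widehat{La}_t(F)) = \widetilde{La}_t(F)$. By Lemma~\ref{lemma:lasserre-psd-strengthen}, the left side is $\widehat{La}_t(F) \cap T^t_{\mathrm{mom}} \cap \bigcap_\ell T^t_\ell$. The constraints defining $\widetilde{La}_t(F)$ add $M_t(g*y) \succeq 0$ and $M_t(-g*y)\succeq 0$, i.e.\ $M_t(g*y)=0$. Any $y$ in the left side also lies in $\widetilde{La}_t(P)$, because $\widehat{La}_t(P)$ only adds linear constraints. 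It also has $(g*y)_J=0$ by Step 1, so Lemma~\ref{lemma:lasserre-face-redundancy} gives $M_t(g*y)=0$. I must also check that the $\widehat{La}$ linear constraints are implied in $\widetilde{La}_t$, or else define $\widetilde{La}_t(F)$ with them included; I expect that to be the delicate bookkeeping point. Theorem~\ref{thm:L-closure-hered} then yields
$$\widetilde{La}_t(F) = R_{\L^*_{\mathrm{Las}}}(\widehat{La}_t(P)) \cap \widehat{La}_t(F) = \widetilde{La}_t(P) \cap \widehat{La}_t(F).$$

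\textbf{Step 3 (projection).} For the inclusion $La_t(F) \subseteq La_t(P) \cap F$, the $J=\emptyset$ equation gives $\delta - c\bar x = 0$. For the reverse, take $\bar x \in La_t(P)\cap F$ and a lift $\bar y \in \widetilde{La}_t(P)$. I will use the partition-of-unity identity $\sum_{J_1' \subseteq T} \prod_{J_1'} x_i \prod_{T\setminus J_1'}(1-x_j) = 1$, multiplied by $g$ and linearized, to get $\sum_{J_1'} \mathcal{G}_{J_1',T\setminus J_1'}(\bar y) = g(\bar x) = 0$. Each term is nonnegative, so every term vanishes, and $\bar y \in \widehat{La}_t(F)$; hence $\bar x \in La_t(F)$.

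\textbf{Main obstacle.} I expect the main obstacle to be Step 1/2 consistency. Validity of the $\mathcal{G} \geq 0$ linearizations for $\bar y$ in $\widetilde{La}_t(P)$ must come from the PSD constraints if $\widehat{La}$ is not part of $\widetilde{La}$. I would derive it from $M_t(g*y)\succeq 0$, obtained via \eqref{eq:Mt-decomp}, using its diagonal entries and the Möbius-type combinations $\sum \pm (g*y)_{J_1\cup K}$, which are quadratic forms $v^\top M_t(g*y) v \geq 0$ with $v$ a $0/\pm1$ indicator vector.
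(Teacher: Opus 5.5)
Your proposal is correct and follows essentially the same route as the paper. You exhibit $\widehat{La}_t(F)$ as the face of $\widehat{La}_t(P)$ cut out by $\mathcal{G}_{J_1,J_2}(y)=0$, apply Theorem~\ref{thm:L-closure-hered} with $\L^*_{\mathrm{Las}}$, use Lemma~\ref{lemma:lasserre-face-redundancy} to identify $\widetilde{La}_t(P)\cap\widehat{La}_t(F)$ with $\widetilde{La}_t(F)$, and project via the partition-of-unity identity. The bookkeeping point you flag, namely that the linear constraints of $\widehat{La}_t$ are implied by the semidefinite constraints (so $\widetilde{La}_t\subseteq\widehat{La}_t$), is used tacitly by the paper in \eqref{eq:lasserre-as-L} and in its ``immediate'' inclusion, and your $0/\pm1$ quadratic-form argument $v^\top M_t(g*y)\,v\ge 0$ supplies it correctly.
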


\begin{proof}
    Write $F = P \cap \{x : cx = \delta\}$, where $cx \leq \delta$ is valid for $P$, and set $g_{m+1}(x) := \delta - cx$. Then
    \begin{align}
        \widetilde{La}_t(F) = \widetilde{La}_t(P) \cap \{y : M_t(g_{m+1} * y) = 0\}, \label{eq:La-F-lifted}
    \end{align}
    since imposing both $M_t(g_{m+1}*y) \succeq 0$ and $M_t((-g_{m+1})*y) \succeq 0$ is equivalent to $M_t(g_{m+1}*y) = 0$.

    As in Corollary~\ref{cor:sa-hered}, the linearized products of $g_{m+1}(x) \geq 0$ and $-g_{m+1}(x) \geq 0$ with bound-factor products of order at most $t$ give
    \begin{align}
        \widehat{La}_t(F) = \widehat{La}_t(P) \cap \{y : \mathcal{G}_{J_1, J_2}(y) = 0 \text{ for all pairs } (J_1, J_2) \text{ of order} \leq t\}, \label{eq:hat-La-F}
    \end{align}
    where $\mathcal{G}_{J_1, J_2}(y)$ denotes the linearization of $(\delta - cx)\prod_{i \in J_1}x_i \prod_{j \in J_2}(1-x_j)$.  This exhibits $\widehat{La}_t(F)$ as a face of $\widehat{La}_t(P)$. 

    Applying Theorem~\ref{thm:L-closure-hered} with the family $\L^*_{\mathrm{Las}}$ and the face $\widehat{La}_t(F)$ of $\widehat{La}_t(P)$ gives
    \begin{align}
        R_{\L^*_{\mathrm{Las}}}(\widehat{La}_t(F)) = R_{\L^*_{\mathrm{Las}}}(\widehat{La}_t(P)) \cap \widehat{La}_t(F) = \widetilde{La}_t(P) \cap \widehat{La}_t(F). \label{eq:lasserre-lifted-hered}
    \end{align}
    We claim that $\widetilde{La}_t(P) \cap \widehat{La}_t(F) = \widetilde{La}_t(F)$. The inclusion $\widetilde{La}_t(F) \subseteq \widetilde{La}_t(P) \cap \widehat{La}_t(F)$ is immediate. For the reverse, let $y \in \widetilde{La}_t(P) \cap \widehat{La}_t(F)$. The binding equalities $\mathcal{G}_{J_1, \emptyset}(y) = 0$ for $|J_1| \leq t$ give $(g_{m+1}*y)_J = 0$ for all $J \in \P_t(V)$. Lemma~\ref{lemma:lasserre-face-redundancy} then gives $M_t(g_{m+1}*y) = 0$, so $y \in \widetilde{La}_t(F)$ by \eqref{eq:La-F-lifted}.

    Projection to the $x$-space mirrors Corollary~\ref{cor:sa-hered}. For $La_t(F) \subseteq La_t(P) \cap F$, any $\bar y \in \widetilde{La}_t(F)$ lies in $\widetilde{La}_t(P)$ (so $\bar x \in La_t(P)$) and in $\widehat{La}_t(F)$, which by \eqref{eq:hat-La-F} gives $\delta - c\bar x = 0$, so $\bar x \in F$. For the reverse, let $\bar x \in La_t(P) \cap F$ with $\bar y \in \widetilde{La}_t(P)$ projecting to $\bar x$. The polynomial-identity argument of Corollary~\ref{cor:sa-hered} shows $\mathcal{G}_{J_1, J_2}(\bar y) = 0$ for all pairs of order $\leq t$, so $\bar y \in \widehat{La}_t(F)$ by \eqref{eq:hat-La-F}. Then \eqref{eq:lasserre-lifted-hered} and the claim give $\bar y \in \widetilde{La}_t(F)$, hence $\bar x \in La_t(F)$.
\end{proof}

\section{Cutting-Plane Procedures Without the Hereditary Property} \label{sec:non-hered}

Theorem~\ref{thm:L-closure-hered} shows that any cutting-plane procedure realized as an $\L$-closure for a family $\L$ common to the underlying set and its faces satisfies the hereditary property automatically. In this section we present four classical procedures which cannot be realized as such $\L$-closures, and for which the hereditary property fails, namely the closure of Gomory's fractional cuts, derived from either all bases or feasible bases only, the mixed-integer Chv\'atal closure, the $+$-cut closure of \cite{pokutta2010rank}, and the closure of Dantzig cuts derived from feasible bases. By contrast, we will show in Section~\ref{sec:corner-polyhedron} that the closure of the Dantzig cuts derived from all bases (feasible and infeasible) has the hereditary property.

The Gomory fractional and Dantzig cut procedures are defined for pure integer programs in standard form. For these, let $P := \{x \in \R_+^n : Ax = b\}$ be a rational polyhedron with $A \in \Q^{m \times n}$ of rank $m$ and $b \in \Q^m$, and let $S := P \cap \Z^n$. Denoting by $\bar a_{ij}$ the coefficients of the simplex tableau associated with a (possibly infeasible) basis $B$ of $A$, we may rewrite the system $Ax = b$ as
\begin{align}
    x_i = \bar{b}_i - \sum_{j \in N} \bar{a}_{ij} x_j \ \text{ for } i \in B, \label{eq:tableau}
\end{align}
where $N = \{1,\ldots,n\} \setminus B$ denotes the indices of the non-basic variables. The corresponding basic solution $x^B$ is given by $x^B_i = \bar b_i$ for $i \in B$, $x^B_j = 0$ for $j \in N$. The counterexamples for these two procedures involve a facet of $P$ of the form $F = P \cap \{x : x_t = 0\}$. Since $x_t \geq 0$ is facet defining, the row vector ${e^t}^\top$ is linearly independent of the rows of $A$, and $F$ is again a polyhedron in standard form,
\begin{align}
    F = \bigl\{x \in \R_+^n : A_F x = b_F\bigr\}, \quad \text{where } A_F := \begin{pmatrix} A \\ {e^t}^\top \end{pmatrix} \text{ and } b_F := \begin{pmatrix} b \\ 0 \end{pmatrix}, \notag
\end{align}
to which the procedures apply through this description. Both procedures generate their cuts from the simplex tableaux of the defining system, and are therefore intrinsically tied to its description. The cuts of $F$ are generated by the bases of $A_F$ rather than by those of $A$, so the closures of $P$ and of its faces are not realized as $\L$-closures for a family of lattice-free sets common to both, and Theorem~\ref{thm:L-closure-hered} does not apply.

\subsection{A Counterexample for Gomory Fractional Cuts} \label{subsec:counter-gfc}

The \emph{Gomory fractional cut} (\cite{gomory1958outline}) is a classical cutting plane for pure integer programs in standard form. The cutting-plane algorithm based on these cuts, Gomory's lexicographic method, solves pure integer programs in a finite number of iterations (\cite{gomory1963algorithm}). For a basis $B$ of $A$ and a row $i \in B$ with $\bar b_i \notin \Z$, the Gomory fractional cut from row $i$ is
\begin{align}
    \sum_{j \in N} \frac{f_j}{f_0}\, x_j \geq 1, \label{eq:gfc-cut}
\end{align}
where $f_0 := \bar b_i - \floor{\bar b_i} > 0$ and $f_j := \bar a_{ij} - \floor{\bar a_{ij}}$ for $j \in N$. It is a valid inequality for $S$. Indeed, for $x \in S$, row $i$ of the tableau \eqref{eq:tableau} gives $\sum_{j \in N} f_j x_j = (\bar b_i - x_i) - \sum_{j \in N} \floor{\bar a_{ij}} x_j = f_0 + k$ for some $k \in\Z$. Since $f_0 \in (0,1)$ and $\sum_{j \in N} f_j x_j \geq 0$, this implies $\sum_{j \in N} f_j x_j \geq f_0$.

The \emph{Gomory fractional closure} of $P$ is the intersection of $P$ with the Gomory fractional cuts generated by all bases,
\begin{align}
    G_{B}(P) := P \cap \bigcap_{\substack{B \text{ basis of } A \\ x^{B} \notin \Z^n}} \bigl\{x \in \R^n : x \text{ satisfies \eqref{eq:gfc-cut} for all } i \in B \text{ with } \bar b_i \notin \Z\bigr\}. \label{eq:gfc-closure}
\end{align}
As in Gomory's algorithm, one may instead generate the cuts from feasible bases only. We denote by $G_{FB}(P)$ the closure defined as in \eqref{eq:gfc-closure}, with the intersection restricted to the bases additionally satisfying $x^{B} \geq 0$. The next example exhibits a polytope $P$ and a facet $F$ of $P$ for which both closures fail the hereditary property.

\begin{example} \label{ex:gfc}
    Consider the polytope
    \begin{align}
        P := \{x \in \R_+^2 : 2x_1 + x_2 \leq 4, \ 3x_1 + 3x_2 \geq 1\},
    \end{align}
    whose standard form is $P = \{x \in \R_+^4 : 2x_1 + x_2 + x_3 = 4, \ 3x_1 + 3x_2 - x_4 = 1\}$, and its facet
    \begin{align}
        F = P \cap \{x : x_2 = 0\}.
    \end{align}
    Then $G_B(F) \neq G_B(P) \cap F$ and $G_{FB}(F) \neq G_{FB}(P) \cap F$.
\end{example}

The fractional feasible basic solutions of $P$ are $(0, \frac{1}{3}, \frac{11}{3}, 0)$ and $(\frac{1}{3}, 0, \frac{10}{3}, 0)$, and the fractional infeasible basic solution is $(\frac{11}{3}, -\frac{10}{3}, 0, 0)$. The basis $\{2, 3\}$, at the vertex $(0, \frac{1}{3}, \frac{11}{3}, 0)$, has the tableau rows
\begin{align}
    x_2 = \tfrac{1}{3} - x_1 + \tfrac{1}{3} x_4, \qquad x_3 = \tfrac{11}{3} - x_1 - \tfrac{1}{3} x_4, \notag
\end{align}
whose Gomory fractional cuts are $2 x_4 \geq 1$ and $\frac{1}{2} x_4 \geq 1$, respectively. The remaining fractional bases do not produce any different Gomory fractional cuts. Both rows of the basis $\{1,3\}$ yield $2x_4 \geq 1$, and both rows of the infeasible basis $\{1,2\}$ yield $\frac{1}{2} x_4 \geq 1$. Observe that the cut $\frac{1}{2} x_4 \geq 1$ dominates $2 x_4 \geq 1$. In the original variables, the cut $\frac{1}{2}x_4 \geq 1$ reads $x_1 + x_2 \geq 1$. 
On $F$, the cuts $\frac{1}{2}x_4 \geq 1$ and $2x_4 \geq 1$ read $x_1 \geq 1$ and $x_1 \geq \frac{1}{2}$ in the original variables, so
\begin{align}
    G_{B}(P) \cap F = G_{FB}(P) \cap F = \{x \in \R^2_+: 1 \leq x_1 \leq 2, \ x_2 = 0\}. \notag
\end{align}

The system defining $F$ has three bases, each containing $x_2$. The bases $\{1,2,4\}$ and $\{2,3,4\}$ have the integral basic solutions $(2,0,0,5)$ and $(0,0,4,-1)$ and generate no cuts. The only remaining basis is $\{1,2,3\}$, with the feasible basic solution $(\frac{1}{3}, 0, \frac{10}{3}, 0)$ and tableau rows $x_1 = \frac{1}{3} + \frac{1}{3}x_4$ and $x_3 = \frac{10}{3} - \frac{2}{3}x_4$, both of which yield the single Gomory fractional cut $2x_4 \geq 1$. In the original variables, this cut is $x_1 \geq \frac{1}{2}$ on $F$. Hence, under either convention,
\begin{align}
    G_{B}(F) = G_{FB}(F) = \{x \in \R^2_+: \tfrac{1}{2} \leq x_1 \leq 2, \ x_2 = 0\} \neq G_B(P) \cap F. \notag
\end{align}

\begin{figure}[ht!]
    \centering
    \includegraphics[width=0.5\linewidth]{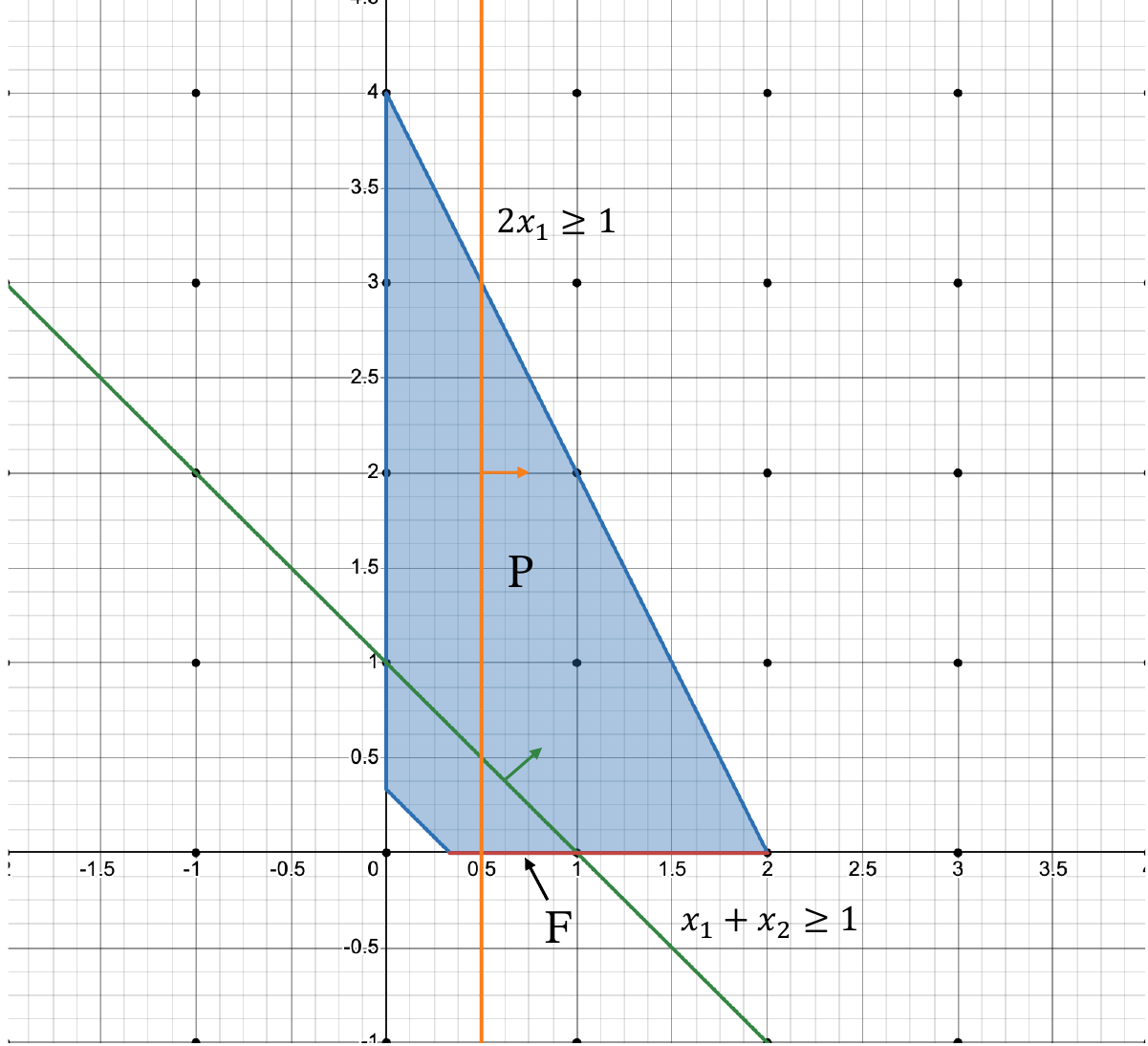}
    \caption{A 2-dimensional example where the Gomory fractional cuts do not satisfy the hereditary property.}
    \label{fig:gomory_example}
\end{figure}

The failure is driven by the bases $\{1,2\}$ and $\{2,3\}$ of $P$, which produce the strongest Gomory fractional cut, $x_1 \geq 1$, on $F$. However, the corresponding solutions $(\frac{11}{3}, -\frac{10}{3}, 0, 0)$ and $(0, \frac{1}{3}, \frac{11}{3}, 0)$ violate $x_2 = 0$, so they are not basic solutions of the system defining $F$ and cannot be used to derive this cut (see Figure \ref{fig:gomory_example}).

\begin{remark} \label{rmk:gfc-chvatal}
    Every Gomory fractional cut is a Chv\'atal inequality of $P$ (see \cite{conforti2014integer}, Section~5.2), so the Gomory fractional cut closure contains the Chv\'atal closure. The containment is strict in general (\cite{cornuejols2001elementary}). While the Chv\'atal closure satisfies the hereditary property for faces of rational polyhedra in the pure integer setting (\cite{schrijver1980cutting}), Example\ref{ex:gfc} shows that its relaxations $G_B(P)$ and $G_{FB}(P)$ do not.
\end{remark}

\subsection{A Counterexample for the Mixed-Integer Chv\'atal Closure} \label{subsec:counter-mixed-chvatal}

We consider a mixed-integer extension of the classical Chv\'atal closure (\cite{chvatal1973edmonds}) and exhibit a polytope $P$ and a face $F$ for which the hereditary property fails to hold.

Recall from Section~\ref{subsec:notation} the integer set $I$ and the continuous set $C := \{1,\ldots,n\} \setminus I$. Let $Q = \{x \in \R^n : Ax \leq b\}$ be a rational polyhedron with mixed-integer feasible region $S = \{x \in Q : x_j \in \Z, \ j \in I\}$. A \emph{mixed-integer Chv\'atal inequality} for $Q$ is an inequality $\pi x \leq \pi_0$ with $\pi \in \Z^n$ such that there exists $u \geq 0$ with $uA = \pi$, $uA_C = 0$, $uA_I \in \Z^{|I|}$, and $\pi_0 \geq \floor{ub}$. The \emph{mixed-integer Chv\'atal closure} $Q^{Ch}$ of $Q$ is the intersection of $Q$ with all such inequalities. 

Note that the structural mechanism of Section~\ref{sec:intersection-cuts} does not apply. Since mixed-integer Chv\'atal inequalities are derived from the inequalities valid for the underlying set, applying the procedure to a face $F$ generates cuts that cannot necessarily be obtained from any family of lattice-free convex sets used to obtain the closure of $Q$, so the procedure is not an $\L$-closure for a family common to $Q$ and its faces.

\begin{example}
    Let $I = \{1\}$, $C = \{2\}$, and $S = P \cap (\Z \times \R)$ for the polytope
    \begin{align}
        P := \{x \in \R^2: x_1 + x_2 \leq 1, \ -x_1 + x_2 \leq 0, \ x_2 \geq 0\}.
    \end{align}
    Consider its face
    \begin{align}
        F = P \cap \{x : x_2 = \tfrac{1}{2}\} = \{(\tfrac{1}{2},\tfrac{1}{2})\}.
    \end{align}
    Then $F^{Ch} \neq P^{Ch} \cap F$.
\end{example}

It is straightforward to check that $F^{Ch} = \emptyset$, since $F$ is a single fractional point in the integer-restricted variable $x_1$, while $P^{Ch} = P$, since no nontrivial mixed-integer Chv\'atal inequality cuts off any point of $P$. Therefore $\emptyset = F^{Ch} \neq P^{Ch} \cap F = F$.

\subsection{A Counterexample for \texorpdfstring{$+$}{+}-Cuts} \label{subsec:counter-pluscut}

The $+$-cut procedure was introduced by \cite{pokutta2010rank} as a cutting-plane technique in the 0-1 setting. Unlike convexification-based methods, it is combinatorial in nature and relies on a structured family of disjunctions over logarithmically sized subsets of variables.

Let $P \subseteq [0,1]^n$ be a polytope and $S := P \cap \{0,1\}^n$. Let $\tilde J \subseteq \{1,\ldots,n\}$ with $|\tilde J| \leq \ceil{\log n}$, and let $I \subseteq \tilde I \subseteq \{1,\ldots,n\}$ with $\tilde I \cap \tilde J = \emptyset$.
If there exists $\epsilon > 0$ such that, for all $J \subseteq \tilde J$, the inequality
\begin{align}
    \sum_{i \in I} x_i + \sum_{i \in \tilde I \setminus I} (1 - x_i) + \sum_{i \in J} x_i + \sum_{i \in \tilde J \setminus J} (1 - x_i) \geq \epsilon
\end{align}
is valid for $P$, then the inequality
\begin{align}
    \sum_{i \in I} x_i + \sum_{i \in \tilde I \setminus I} (1 - x_i) \geq 1
\end{align}
is called a \emph{$+$-cut} for $P$.
The \emph{$+$-cut closure} of $P$, denoted $P^+$, is defined as the intersection of all $+$-cuts valid for $P$.

As with the mixed-integer Chv\'atal closure and the Gomory fractional cut closure, the mechanism of Section~\ref{sec:intersection-cuts} does not apply to $+$-cuts. A $+$-cut is derived from inequalities valid for $P$, so applying the procedure to a face $F$ generates new cuts that cannot be derived from any family of lattice-free convex sets that may produce the closure of $P$. The procedure is therefore not an $\L$-closure for a family common to $P$ and its faces.

We now exhibit a polytope $P$ and a face $F$ for which $F^+ \neq P^+ \cap F$.

\begin{example}
    Consider the polytope
    \begin{align}
        P := \{x \in [0,1]^3: x_1 + x_2 \geq 1, \ x_1 + x_3 \geq 1, \ x_2 + x_3 \geq 1\}
    \end{align}
    and its face
    \begin{align}
        F = P \cap \{x : x_1 + x_2 = 1\}.
    \end{align}
    Then $F^{+} \neq P^{+} \cap F$.
\end{example}

Observe that
\begin{align}
    P^+ = P = \operatorname{conv}\big( (0,1,1),\ (1,1,1),\ (1,1,0),\ (1,0,1),\ (\tfrac{1}{2},\tfrac{1}{2},\tfrac{1}{2}) \big),
\end{align}
because the following inequalities are all of the $+$-cuts of $P$:
\begin{align}
    x_1 + x_2 \geq 1 \notag \\
    x_1 + x_3 \geq 1 \notag \\
    x_2 + x_3 \geq 1 \notag \\
    x_1 + x_2 + x_3 \geq 1 \notag \\
    x_1 + (1-x_2) + x_3 \geq 1 \notag \\
    (1-x_1) + x_2 + x_3 \geq 1 \notag \\
    x_1 + x_2 + (1-x_3) \geq 1 \notag
\end{align}
However, $F^+ = F \cap \{x: x_3 = 1\} = \operatorname{conv}((0,1,1),(1,0,1))$ as the inequality $x_3 \geq 1$ is a $+$-cut of $F$. Therefore, $F = P^+ \cap F \neq F^+$.

\subsection{A Counterexample for Dantzig Cuts from Feasible Bases} \label{subsec:counter-dantzig}

We conclude with the cutting-plane procedure introduced by \cite{dantzig1959note}, one of the earliest methods developed for pure integer programs. For a basis $B$ of $A$ with $x^B \notin \Z^n$, the associated \emph{Dantzig cut} is
\begin{align}
    \sum_{j \in N} x_j \geq 1. \label{def:dantzig-cut}
\end{align}
This inequality removes the fractional basic solution without violating any point of $S$ because a point $x \in S$ with $x_j = 0$ for all $j \in N$ would equal $x^B \notin \Z^n$.

Dantzig originally defined this procedure for the pure integer setting, but it can be extended naturally to mixed-integer programs. In particular, let $I$ and $C$ denote the index sets of integer and continuous variables, respectively. In this case, one may omit the tableau equations corresponding to basic variables in $C$, since each such variable $x_i$ appears in only one equality and does not affect integrality constraints. Hence, without loss of generality, all basic variables may be assumed to be integer variables when generating Dantzig cuts.

Although Dantzig provides a procedural method for deriving such cuts, the notion of a Dantzig cut closure is not well defined in the literature. Since Dantzig's algorithm generates cuts at basic feasible solutions, a natural first definition is the closure derived from the cuts \eqref{def:dantzig-cut} associated with feasible bases,
\begin{align}
    D_{FB}(P) := P \cap \Big\{x \in \R^n: \sum_{j \in N} x_j \geq 1 \text{ for all bases } B \text{ of } A \text{ such that } x^{B} \geq 0 \text{ and } x^{B} \notin \Z^n \Big\}. \label{eq:dantzig-feasible-closure}
\end{align}
In contrast with the Gomory fractional cut, a cutting-plane algorithm based on Dantzig cuts, which are generated at basic feasible solutions, cannot, in general, solve a pure integer program in a finite number of iterations (\cite{gomory1963convergence}). The next example shows that the closure $D_{FB}$ also fails to satisfy the hereditary property.

\begin{example} \label{ex:dantzig-feasible}
    Consider the polytope
    \begin{align}
        P := \{x \in \R_+^2 : 2x_1 + 2x_2 \leq 1, \ 5x_2 \leq 2\},
    \end{align}
    whose standard form is $P = \{x \in \R_+^4 : 2x_1 + 2x_2 + x_3 = 1, \ 5x_2 + x_4 = 2\}$, and its facet
    \begin{align}
        F = P \cap \{x : x_1 = 0\}.
    \end{align}
    Then $D_{FB}(F) \neq D_{FB}(P) \cap F$.
\end{example}

In the standard form, $S = P \cap \Z^4 = \{(0,0,1,2)\}$. The polytope $P$ has one integral basic feasible solution $(0,0,1,2)$, and three fractional basic feasible solutions $(\frac{1}{10}, \frac{2}{5}, 0, 0)$, $(\frac{1}{2}, 0, 0, 2)$, and $(0, \frac{2}{5}, \frac{1}{5}, 0)$. The fractional basic feasible solutions yield the Dantzig cuts $x_3 + x_4 \geq 1$, $x_2 + x_3 \geq 1$, and $x_1 + x_4 \geq 1$, respectively. On $F$ and in the original variables, these cuts read $x_2 \leq \frac{2}{7}$, $x_2 \leq 0$, and $x_2 \leq \frac{1}{5}$, so $D_{FB}(P) \cap F = \{(0,0,1,2)\}$. The system defining $F$ has a single fractional basic feasible solution, namely $(0, \frac{2}{5}, \frac{1}{5}, 0)$ with basis $\{1,2,3\}$, whose Dantzig cut is $x_4 \geq 1$ and reads $x_2 \leq \frac{1}{5}$ in the original variables. Hence
\begin{align}
    D_{FB}(F) = \bigl\{x\in \R^2_+: x_1 = 0, \ x_2 \leq \tfrac{1}{5}\bigr\} \neq \{(0,0)\} = D_{FB}(P) \cap F. \notag
\end{align}
The cut $x_2 + x_3 \geq 1$ responsible for the failure is generated by the vertex $(\tfrac{1}{2}, 0, 0, 2)$, which does not lie on $F$. The failure, however, appears to be tied to the restriction to feasible bases rather than to the Dantzig cut itself. The basis $\{1,2,4\}$ of $A_F$, whose basic solution $(0, \tfrac{1}{2}, 0, -\tfrac{1}{2})$ is infeasible, generates the additional cut $x_3 \geq 1$, which gives $x_2 \leq 0$ on $F$ in the original variables, and restores the strongest Dantzig cut on $F$ in this example. In Section~\ref{sec:corner-polyhedron} we show that this is not specific to this instance, by proving that the Dantzig closure derived from all bases, feasible or not, satisfies the hereditary property (Corollary~\ref{cor:dantzig-hered}).

\section{The Hereditary Property via the Corner Polyhedron} \label{sec:corner-polyhedron}

The counterexamples of Section~\ref{sec:non-hered} show that the hereditary property can fail for several well-known procedures. In the case of the Dantzig closure, the failure can hinge on which bases are permitted to generate cuts. In this section we develop a framework that accounts for this behavior.

The Gomory fractional and Dantzig cuts are valid inequalities for Gomory's \emph{corner polyhedron} (\cite{gomory1969some}), a relaxation of the integer hull of $P$ associated with a basis of the constraint matrix. Cuts of this kind are indexed by bases, and a face of a polyhedron in standard form has bases of its own. Theorem~\ref{thm:corner-cut-hered} below gives a sufficient condition, relating the cuts that a procedure generates for a polyhedron to those it generates for a face, under which the resulting closure satisfies the hereditary property. The condition has two halves. Each procedure of Section~\ref{sec:non-hered} that generates cuts of this kind satisfies one half and fails the other, which locates the failure of the hereditary property precisely. We show that the Dantzig closure derived from all bases satisfies both, and the hereditary property follows.

We continue in the standard-form setting. Let $P := \{x \in \R_+^n: Ax = b\}$ be a rational polyhedron with $A \in \Q^{m \times n}$ of rank $m$ and $b \in \Q^m$. For a (possibly infeasible) basis $B$ of $A$ with nonbasic index set $N$, we follow the simplex tableau notation of \eqref{eq:tableau}.

Gomory defines the set
$$P(B) := \{x \in \R^n: x_i = \bar{b}_i - \sum_{j \in N} \bar{a}_{ij} x_j \text{ for } i \in B, \ x_j \geq 0 \text{ for } j \in N\}.$$
\noindent The corner polyhedron relative to the basis $B$, $\operatorname{corner}(B)$, is the convex hull of the integer points in $P(B)$,
$$\operatorname{corner}(B) := \operatorname{conv}(P(B) \cap \Z^n).$$

The following lemma characterizes all nontrivial valid inequalities for $\operatorname{corner}(B)$ (see, e.g., \cite{conforti2014integer}, Lemma 6.4).
\begin{lemma} \label{lemma:corner-valid}
    Assume $\operatorname{corner}(B)$ is nonempty. Every nontrivial valid inequality for $\operatorname{corner}(B)$ can be written in the form $\sum_{j \in N} \gamma_j x_j \geq 1$ where $\gamma_j \geq 0$ for all $j \in N$.
\end{lemma}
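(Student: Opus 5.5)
The plan is to pass to the space of nonbasic variables, show that $\operatorname{corner}(B)$ is then described by a nonnegativity cone together with a lattice condition, and read off the form of the valid inequalities from that description.

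\emph{Reduction to nonbasic coordinates.} Since the basic variables are determined by the nonbasic ones through \eqref{eq:tableau}, the projection of $P(B)$ onto the coordinates $x_N$ is a bijection onto $\R^N_+$. Under this map,
\[
\operatorname{corner}(B) = \{x : x_N \in Q\}, \qquad Q := \operatorname{conv}\{x_N \in \Z^N_+ : \bar b_i - \textstyle\sum_{j\in N}\bar a_{ij}x_j \in \Z \ \forall i\in B\}.
\]
Using the tableau equations, any valid inequality $\alpha x \geq \beta$ for $\operatorname{corner}(B)$ can be rewritten as an inequality $\sum_{j\in N}\gamma'_j x_j \geq \beta'$ that is valid for $Q$. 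So it suffices to work with $Q$ and inequalities in $x_N$ only.

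\emph{Recession cone of $Q$.} Let $D$ be a common denominator of the rational entries $\bar a_{ij}$. The points $x_N$ feasible for $Q$ are invariant under adding $D e^j$ for every $j\in N$, since such a shift preserves nonnegativity and changes each basic variable by an integer. Hence, because $\operatorname{corner}(B)$ is nonempty, every direction $e^j$ with $j\in N$ lies in the recession cone of $Q$. Validity of $\sum_j \gamma'_j x_j \geq \beta'$ along these rays forces $\gamma'_j \geq 0$ for all $j\in N$.

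\emph{Normalization by the sign of $\beta'$.} Three cases arise.
\begin{enumerate*}[label=(\roman*)]
\item If $\beta' \leq 0$, the inequality is implied by the trivial inequalities $x_j \geq 0$, and by nonnegativity of $\gamma'$ it is a nonnegative combination of them plus a constant. These are the trivial inequalities excluded in the statement.
\item If $\beta' > 0$, divide by $\beta'$ to obtain $\sum_{j\in N}\gamma_j x_j \geq 1$ with $\gamma_j = \gamma'_j/\beta' \geq 0$.
\item If instead $\beta' < 0$ is combined with $\gamma' \geq 0$, the inequality is implied by $x_N \geq 0$.
\end{enumerate*}
Thus every inequality that is nontrivial, meaning not implied by $x_N \geq 0$ and the tableau equalities, takes the stated form.

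\emph{Main obstacle.} The delicate part is making the reduction to $x_N$ precise. Specifically, one must check that adding multiples of the tableau equations to any valid inequality for $\operatorname{corner}(B)$ does not affect its validity, since the affine hull of $\operatorname{corner}(B)$ is contained in the solution set of \eqref{eq:tableau}. One must also pin down what ``nontrivial'' means, namely not a consequence of $x_N \geq 0$ and \eqref{eq:tableau}. Once these are settled, the recession-cone argument above finishes the proof.
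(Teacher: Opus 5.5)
Your argument is correct and is essentially the standard proof of the result that the paper cites without proof (\cite{conforti2014integer}, Lemma~6.4). That proof eliminates $x_B$ via \eqref{eq:tableau}, uses that each nonbasic unit direction is a recession direction of $\operatorname{corner}(B)$ to force $\gamma'_j \geq 0$, and rescales by $\beta' > 0$, with ``nontrivial'' meaning not implied by $x_N \geq 0$ on the affine subspace of the tableau.

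The only real difference is that you verify the recession directions directly, through the common-denominator periodicity of the integer points, rather than quoting the general fact (via Meyer's theorem) that $\operatorname{corner}(B)$ has the same recession cone as $P(B)$. Your cases (i) and (iii) coincide and can be merged.
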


Furthermore, by \cite{conforti2014integer}, Theorem~6.12, the nontrivial valid inequalities for $\operatorname{corner}(B)$ are intersection cuts. Hence any closure derived from them is again an instance of an $\L$-closure, but with a family $\L$ indexed by the bases of the underlying polyhedron.

\subsection{Faces and Basis Extension} \label{subsec:basis-extension}

Let $J \subseteq \{1, \ldots, n\}$ index a minimal set of facet-defining inequalities such that $P = \{x \in \R^n: Ax = b, \ x_j \geq 0, \ j \in J \}$. Let $F$ be a proper face of $P$ characterized by $F := \{x \in \R^n: A_Fx = b_F, x \geq 0\}$.
\begin{lemma} \label{lemma:A_F-rank}
    We may assume that $A_F$ has rank $m+d$ where $d = \operatorname{dim}(P) - \operatorname{dim}(F)$.
\end{lemma}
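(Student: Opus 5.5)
Our plan is to exhibit a description of $F$ in which $A_F$ consists of the rows of $A$ together with exactly $d$ linearly independent unit rows ${e^j}^\top$, $j \in J$, chosen so that each added row is linearly independent of the rows of $A$ and of the previously added ones. The phrase ``we may assume'' means that, among all descriptions of $F$ of the form $\{x : A_F x = b_F,\ x \geq 0\}$ with $A_F = \begin{pmatrix} A \\ E \end{pmatrix}$ and $E$ made of unit rows indexing inequalities tight on $F$, we can discard redundant rows so that the rank is exactly $m+d$. Since $F$ is a face of $P$, it is exposed, and hence
\begin{align}
    F = P \cap \{x : x_j = 0,\ j \in J_F\},
\end{align}
where $J_F := \{j \in J : x_j = 0 \text{ for all } x \in F\}$. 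This is the standard fact that a face of a polyhedron is obtained by setting a subset of its defining inequalities to equality. Concretely, take a point $\bar x$ in the relative interior of $F$. Then $J_F$ is the set of $j \in J$ with $\bar x_j = 0$, and the set on the right-hand side is the smallest face containing $\bar x$, namely $F$. So we initially take $A_F = \begin{pmatrix} A \\ E_{J_F} \end{pmatrix}$ and $b_F = \begin{pmatrix} b \\ 0 \end{pmatrix}$.

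The dimension computation is the core step. The affine hull of $F$ is
\begin{align}
    \{x : Ax = b,\ x_j = 0,\ j \in J_F\}.
\end{align}
This holds because $\bar x$ satisfies every remaining inequality $x_j \geq 0$, $j \in J \setminus J_F$, strictly, and the inequalities $x_j \geq 0$ with $j \notin J$ are implied by the others. Hence a small neighbourhood of $\bar x$ inside this affine subspace still lies in $F$. Therefore $\dim(F) = n - \operatorname{rank}(A_F)$. Similarly, $\dim(P) = n - \operatorname{rank}\begin{pmatrix} A \\ E_{J_0} \end{pmatrix}$, where $J_0$ denotes the implicit equalities of $P$.

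The main obstacle is this set $J_0$. Minimality of $J$ as a set of facet-defining inequalities means no $x_j \geq 0$ with $j \in J$ is an implicit equality, so $J_0 \cap J = \emptyset$. However, variables outside $J$ might be identically zero on $P$. I would handle this by noting that if $x_k = 0$ on all of $P$, then $e^k$ lies in the row space of $A$ relative to the affine hull. More precisely, $\operatorname{aff}(P) = \{x : Ax = b\}$ under the paper's standing assumptions, because some point of $P$ satisfies all inequalities $x_j > 0$, $j \in J$, and the constraints $x_k \geq 0$, $k \notin J$, are redundant. I would verify this using a relative interior point of $P$ and a convex combination argument. This gives $\dim(P) = n - m$, since $A$ has rank $m$.

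Combining the two dimension counts yields $\operatorname{rank}(A_F) = n - \dim(F) = m + d$. Finally, we can prune $E_{J_F}$ to $d$ rows that are independent modulo the row space of $A$. The dropped equalities are then linear consequences of the kept ones, so the polyhedron $F$ is unchanged and $A_F$ has exactly $m+d$ rows of full rank. This is the form needed for the basis-extension arguments that follow.
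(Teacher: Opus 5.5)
Your proof is correct, but it takes a different route from the paper's.

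\textbf{The paper's route.} The paper argues by induction along a chain of facets $P = F_0 \supset F_1 \supset \cdots \supset F_d = F$. At each step, the facet of the current polyhedron that contains $F$ is cut out by some $x_t = 0$, where $t$ lies in the current minimal facet-defining index set. The row ${e^t}^\top$ is linearly independent of the current equality rows because $x_t \geq 0$ is a nonredundant facet-defining inequality. So each step adds exactly one independent unit row, and the rank climbs from $m$ to $m+d$.

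\textbf{Your route.} You do a one-shot dimension count instead. Using a relative interior point, you identify $\operatorname{aff}(F) = \{x : Ax = b,\ x_j = 0,\ j \in J_F\}$ and $\operatorname{aff}(P) = \{x : Ax = b\}$. From these you read off $\operatorname{rank}(A_F) = n - \dim(F) = m+d$. You then prune the unit rows down to $d$ rows that are independent modulo the row space of $A$; the discarded equalities are implied because $F \neq \emptyset$.

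\textbf{Comparison.} Your argument avoids the induction and the implicit appeal to gradedness of the face lattice, that is, the existence of a facet chain of length exactly $d$. It also makes explicit the fact $\dim(P) = n-m$, which the paper's hypothesis on $J$ encodes only implicitly. The paper's argument, in exchange, builds the $d$ unit rows one at a time, each indexed by an element of $J$, together with a minimal facet description of $F$, and needs no affine-hull computation.

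\textbf{A small attribution point.} What rules out $x_j \geq 0$, $j \in J$, being an implicit equality of $P$ is that each such inequality is facet-defining, so the facet it defines is a proper face. Minimality of $J$ alone would not suffice: in $\{x \in \R^2_+ : x_1 + x_2 = 0\}$ both nonnegativity constraints are needed, yet both are implicit equalities.
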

\begin{proof}
    If $F$ is a proper face of $P$, there exists a facet $F_1$ of $P$ that contains $F$. Let $F_1 = P \cap \{x \in \R^n : x_t=0\}$ for some $t \in J$. Let $x_j \geq 0$ for $j \in J_1$ be a minimal set of facet-defining inequalities for the polyhedron $F_1$. Then $F_1 = \{x \in \R^n: A_{F_1}x=b_{F_1}, \ x_j \geq 0 \ \forall j \in J_1\}$, where $A_{F_1} = \begin{pmatrix}A \\ {e^t}^\top\end{pmatrix}$ and $b_{F_1} = \begin{pmatrix}b \\ 0\end{pmatrix}$. Since $x_t \geq 0$ is a nonredundant facet-defining inequality of $P$, ${e^t}^\top$ is linearly independent of the rows of $A$. Therefore, $\operatorname{rank}(A_{F_1}) = m+1$.

    By induction on $d$, we have that $F = \{x \in \R^n: A_{F}x=b_{F}, \ x_j \geq 0 \ \forall j \in J_d\}$, where $A_F = \begin{pmatrix}A \\ E\end{pmatrix}$, $E$ is a matrix of standard basis row vectors corresponding to variables set to zero in $F$, $\operatorname{rank}(A_F) = m+d$, $b_F = \begin{pmatrix}b \\ \mathbf{0}\end{pmatrix}$,  and $J_d$ indexes a minimal set of facet-defining inequalities in $F$.
\end{proof}

Throughout, we let $D \subseteq J$ denote the set of indices $j$ such that $x_j = 0$ is a constraint in $A_F x = b_F$, so that $|D| = d$ and $E$ consists of the rows ${e^j}^\top$ for $j \in D$.

For a basis $B_F$ of $A_F$, define
$$F(B_F) := \{x \in \R^n: x_i = \tilde{b}_i - \sum_{j \in N_F} \tilde{a}_{ij} x_j \text{ for } i \in B_F, \ x_j \geq 0 \text{ for } j \in N_F\},$$
where $\tilde b$ is the corresponding basic solution in $F$, and $\tilde a_{ij}$ are the coefficients of the simplex tableau of the extended system that defines $F$. Let $\operatorname{corner}(B_F)$ be the convex hull of the integer points in $F(B_F)$.

Consider a basis $B_F$ of $A_F$. The row ${e^j}^\top$ of $E$ has its only nonzero entry in column $j$, so if some $j \in D$ were nonbasic, the submatrix of $A_F$ with columns indexed by $B_F$ would contain a zero row. Therefore, the $d$ variables $x_j$, for $j \in D$, are basic variables in $B_F$. Moreover, $B := B_F \setminus D$ is a basis of $A$ with $|B| = m$. Indeed, after permuting rows and columns, the nonsingular submatrix of $A_F$ with columns indexed by $B_F$ takes the block triangular form $\begin{pmatrix} A_B & A_D \\ 0 & I \end{pmatrix}$, where $A_B$ and $A_D$ denote the submatrices of $A$ with columns indexed by $B$ and by $D$, so $A_B$ is nonsingular. The corresponding sets of non-basic variables satisfy $N = N_F \cup D$. The next lemma shows that, along this correspondence, the simplex tableau entries in the columns indexed by $N_F$ are unchanged.

\begin{lemma} \label{lemma:basis-correspondence}
    Let $B_F$ be a basis of $A_F$ and $B := B_F \setminus D$, with nonbasic index sets $N_F$ and $N = N_F \cup D$, respectively. Then $\tilde b_i = \bar b_i$ and $\tilde a_{ij} = \bar a_{ij}$ for all $i \in B$ and $j \in N_F$. Moreover, $\tilde b_i = 0$ and $\tilde a_{ij} = 0$ for all $i \in D$ and $j \in N_F$. In particular, $x^{B_F} = x^B$.
\end{lemma}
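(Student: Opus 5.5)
The plan is to compare the tableau of $A_F$ at $B_F$ directly with the tableau of $A$ at $B$. The tool is uniqueness of the basic representation. The tableau at a basis expresses each basic variable as an affine function of the nonbasic variables, and this expression is the unique one consistent with the system.

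Concretely, the tableau \eqref{eq:tableau} for $B$ is characterized as the unique affine map $x_N \mapsto x_B$ such that every $x$ with $x_B = \bar b - \bar A_N x_N$ satisfies $Ax=b$. Uniqueness holds because $A_B$ is nonsingular. Likewise, the tableau for $B_F$ is the unique affine map $x_{N_F} \mapsto x_{B_F}$ such that every resulting $x$ satisfies $A_F x = b_F$. By the block triangular structure $\begin{pmatrix} A_B & A_D \\ 0 & I\end{pmatrix}$ already noted, this uniqueness holds for $A_F$ as well.

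The key step is to exhibit a candidate solution of the $B_F$ system built from the $B$ tableau. For $i \in D$, set $x_i = 0$, that is, $\tilde b_i = 0$ and $\tilde a_{ij} = 0$ for all $j \in N_F$. For $i \in B$, set $x_i = \bar b_i - \sum_{j \in N_F} \bar a_{ij} x_j$; this is the $B$ tableau with the nonbasic variables indexed by $D \subseteq N$ set to zero. I will check that for every choice of $x_{N_F}$ this $x$ satisfies $A_F x = b_F$:
\begin{itemize}
\item the rows of $E$ hold because $x_D = 0$;
\item $Ax=b$ holds because $x$ is obtained from the $B$ tableau with nonbasic values $(x_{N_F}, x_D = 0)$, and every such point solves $Ax=b$.
\end{itemize}

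By uniqueness of the $B_F$ tableau, the coefficients must coincide. This gives $\tilde b_i = \bar b_i$ and $\tilde a_{ij} = \bar a_{ij}$ for $i\in B$ and $j\in N_F$, and it gives $\tilde b_i = 0$ and $\tilde a_{ij}=0$ for $i\in D$. Finally, setting all nonbasic variables to zero in both systems, $x^{B_F}$ has entries $\bar b_i$ on $B$ and zero on $D \cup N_F = N$. This is exactly $x^B$, so $x^{B_F} = x^B$.

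The only delicate point is stating the uniqueness of the tableau cleanly. I would justify it by noting that $A_{B_F}$, the column submatrix of $A_F$ indexed by $B_F$, is nonsingular, so $x_{B_F}$ is determined by $x_{N_F}$ via $x_{B_F} = A_{B_F}^{-1}(b_F - A_{N_F}x_{N_F})$. Comparing coefficients of this affine identity over all $x_{N_F}$ then yields the claimed equalities.
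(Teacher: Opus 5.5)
Your proposal is correct and follows essentially the same route as the paper. Both proofs restrict the tableau of $B$ to $x_D = 0$ and then use uniqueness of the basic representation, which holds because the columns of $A_F$ indexed by $B_F$ are nonsingular, to identify the result with the tableau of $B_F$. The only difference is the direction of the check: the paper notes that every point of $\{x : A_F x = b_F\}$ satisfies the restricted equations, while you verify that every point generated by them satisfies $A_F x = b_F$; your explicit formula $x_{B_F} = A_{B_F}^{-1}(b_F - A_{N_F}x_{N_F})$ makes the uniqueness step slightly more transparent.
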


\begin{proof}
    On the affine subspace $\{x \in \R^n : Ax = b\}$, the tableau of $B$ expresses
    \begin{align}
        x_i = \bar b_i - \sum_{j \in N_F} \bar a_{ij} x_j - \sum_{j \in D} \bar a_{ij} x_j \qquad \text{for } i \in B. \notag
    \end{align}
    On the affine subspace $\{x \in \R^n : A_F x = b_F\}$, we additionally have $x_j = 0$ for $j \in D$, so
    \begin{align}
        x_i = \bar b_i - \sum_{j \in N_F} \bar a_{ij} x_j \ \text{ for } i \in B, \qquad x_j = 0 \ \text{ for } j \in D. \notag
    \end{align}
    These equations express the basic variables of $B_F = B \cup D$ in terms of the nonbasic variables indexed by $N_F$. Since the tableau representation with respect to a basis is unique, they constitute the tableau of $B_F$, which proves the stated equalities. Setting the nonbasic variables to zero yields $x^{B_F} = x^B$.
\end{proof}

We present a second structural lemma that goes in the opposite direction, extending a basis of $A$ to a basis of $A_F$ whose basic solution is again fractional.

\begin{lemma} \label{lemma:basis-extension}
    Let $B$ be a basis of $A$ with $x^B \notin \Z^n$, and let $\sum_{j \in N} \gamma_j x_j \geq 1$, with $\gamma_j \geq 0$ for $j \in N$, be a valid inequality for $\operatorname{corner}(B)$ that is not valid for $F$. Then there exists a basis $B_F$ of $A_F$ such that $B \subset B_F$ and $x^{B_F} \notin \Z^n$.
\end{lemma}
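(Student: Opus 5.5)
The plan is to take for $B_F$ a basis of $A_F$ that contains both $B$ and $D$, and to get fractionality by contradiction. If every such basis had an integral basic solution, a polyhedron containing $F$ would lie inside $\operatorname{corner}(B)$, and then the given cut would be valid for $F$.

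\emph{Step 1 (candidate bases).} Any basis $B_F$ of $A_F$ contains $D$, so we need $B\cup D\subseteq B_F$. Write $B_F = B \cup D \cup K$ with $K \subseteq N\setminus D$ and $|K| = |B\cap D|$. Setting $x_D = 0$ and using the block triangular form from the discussion before Lemma~\ref{lemma:basis-correspondence}, $B_F$ is a basis of $A_F$ exactly when $(B\setminus D)\cup K$ is a basis of $A$. By Lemma~\ref{lemma:A_F-rank}, $\operatorname{rank}(A_F)=m+d$, so the columns of $A$ outside $D$ have rank $m$. Hence the independent set $A_{B\setminus D}$ extends to a basis of $A$ using columns from $N\setminus D$, and at least one such $B_F$ exists. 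Its basic solution $x^{B_F}$ satisfies $Ax=b$, $x_D=0$ and $x_j=0$ for $j\in N\setminus(D\cup K)$.

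\emph{Step 2 (an auxiliary polyhedron).} Define
\begin{align}
F' := \{x \in \R^n : Ax = b,\ x_j = 0 \ (j\in D),\ x_j \geq 0 \ (j \in N\setminus D)\}. \notag
\end{align}
Then $F \subseteq F'$, since points of $F$ are nonnegative and satisfy $A_Fx=b_F$. Also $F'\subseteq P(B)$, because $x_j\ge 0$ holds for all $j\in N$ (the coordinates in $N\cap D$ vanish).

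$F'$ is pointed. A lineality direction must vanish on $D\cup(N\setminus D)$, so it is supported on $B\setminus D$, and $A_{B\setminus D}$ has independent columns, forcing the direction to be zero. Consequently the vertices of $F'$ are obtained by making $n-m-d$ of the inequalities $x_j\ge0$, $j\in N\setminus D$, tight with linearly independent active constraints. These are exactly the basic solutions $x^{B_F}$ for the bases $B_F\supseteq B\cup D$ of Step 1.

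Moreover $F'$ is nonempty. The inequality $\sum_{j\in N}\gamma_jx_j\ge1$ is not valid for $F$, so there is $y\in F\subseteq F'$ with $\sum_{j \in N}\gamma_j y_j<1$.

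\emph{Step 3 (contradiction).} Suppose every $B_F\supseteq B\cup D$ had $x^{B_F}\in\Z^n$. Then $F'$ is a nonempty pointed rational polyhedron with integral vertices, so
\begin{align}
F' = \operatorname{conv}(F'\cap\Z^n) + \operatorname{rec}(F'). \notag
\end{align}
Here $F'\cap \Z^n\subseteq P(B)\cap\Z^n\subseteq\operatorname{corner}(B)$, and $\operatorname{rec}(F')\subseteq\operatorname{rec}(P(B))$.

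We now use the standard fact that for a rational polyhedron with nonempty integer hull, the integer hull has the same recession cone. Applied to $P(B)$, whose integer hull $\operatorname{corner}(B)$ is nonempty because it contains the integral vertices of $F'$, this gives $\operatorname{rec}(\operatorname{corner}(B))=\operatorname{rec}(P(B))$. Hence $F'\subseteq\operatorname{corner}(B)$. Then $y\in F'$ satisfies $\sum_{j \in N}\gamma_j y_j\ge1$, contradicting the choice of $y$. Therefore some $B_F\supseteq B\cup D\supseteq B$ has $x^{B_F}\notin\Z^n$.

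The inclusion $B\subset B_F$ is strict since $|B_F|=m+d>m$. The hypothesis $x^B\notin\Z^n$ is not needed for this argument; it only guarantees the setting of the statement.

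The main obstacle is Step 3. It needs the identification of the vertices of $F'$ with the extended bases, including the degenerate case $B\cap D\neq\emptyset$. It also relies on the recession-cone fact (Meyer's theorem), which requires the rationality of $A$ and $b$.
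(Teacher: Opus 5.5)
Your proof is correct, and its skeleton matches the paper's. Your $F'$ is exactly the paper's auxiliary polyhedron $\bar F=\{x: A_Fx=b_F,\ x_j\ge 0,\ j\in N\}$, you prove pointedness the same way, and you identify its vertices with basic solutions of bases $B_F\supseteq B\cup D$ of $A_F$.

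The two proofs differ in how they turn the violation of the cut on $F$ into a fractional basis.

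\textbf{The paper's route} is local. Since $\gamma\ge 0$ and every ray of $\bar F$ has nonnegative $N$-coordinates, the decomposition theorem gives a vertex $\hat x$ of $\bar F$ with $\sum_{j\in N}\gamma_j\hat x_j<1$. The basis of that vertex must be fractional, since otherwise $\hat x$ itself would be an integer point of $P(B)$ violating the cut.

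\textbf{Your route} is global. If all vertices of $F'$ were integral, then Meyer's theorem on the recession cone of the integer hull gives $F'\subseteq\operatorname{corner}(B)$, so the cut would hold on $F$.

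\textbf{What each buys.} Your argument never uses $\gamma\ge 0$. It actually proves a stronger structural fact: if every basis of $A_F$ containing $B\cup D$ has an integral basic solution, then every valid inequality for $\operatorname{corner}(B)$ is valid for $F$. However, by Lemma~\ref{lemma:corner-valid} the sign condition costs nothing for nontrivial cuts, so this extra generality is modest. The paper's route is more elementary: it needs only the Minkowski decomposition, with no integer-hull theory and no rationality of $A,b$. It is also more informative, since the fractional basic solution it produces is a vertex of $\bar F$ that itself violates the cut.

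\textbf{One overstatement.} The vertices of $F'$ are not \emph{exactly} the basic solutions $x^{B_F}$ with $B_F\supseteq B\cup D$. When $B\cap D\neq\emptyset$, such a basic solution can have negative entries on $K\subseteq N\setminus D$ and then lies outside $F'$. Step~3 only uses the inclusion ``every vertex of $F'$ is such a basic solution,'' and that inclusion is correct.
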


\begin{proof}
    Consider the polyhedron $\bar F := \{x \in \R^n: A_Fx=b_F, \ x_j \geq 0 \text{ for } j \in N\}$, which contains $F$. It is pointed, since any $w$ in its lineality space satisfies $Aw = 0$ and $w_j = 0$ for $j \in N$, hence $A_B w_B = 0$ and $w = 0$.

    We first show that $\bar F$ has a vertex $\hat x$ such that $\sum_{j \in N} \gamma_j \hat x_j < 1$.    
     
    Since the cut is not valid for $F$ and $F \subseteq \bar F$, there exists $\bar y \in \bar F$ with $\sum_{j \in N} \gamma_j \bar y_j < 1$. By the decomposition theorem for polyhedra, we may write $\bar y = \sum_{i=1}^{p} \alpha_i v^i + \sum_{t=1}^{q} \beta_t r^t$, where $v^1,\ldots,v^p$ are the vertices of $\bar F$, $r^1,\ldots,r^q$ are its extreme rays, $\sum_{i=1}^{p} \alpha_i = 1$, $\alpha_i \geq 0$, and $\beta_t \geq 0$. Every ray $r^t$ of $\bar F$ satisfies $r^t_j \geq 0$ for $j \in N$, and $\gamma \geq 0$, so $\sum_{j \in N} \gamma_j r^t_j \geq 0$. It follows that $\sum_{j \in N} \gamma_j v^i_j < 1$ for some $i \in \{1,\ldots,p\}$, and we may take $\hat x := v^i$.

    Because $\hat x$ is a vertex of $\bar F$, it satisfies $n$ linearly independent constraints of $\bar F$ with equality. Since the $m+d$ rows of $A_F$ are linearly independent, these constraints may be chosen as the equalities $A_F x = b_F$ together with the equations $x_j = 0$ for $j$ in a set $T \subseteq \{j \in N : \hat x_j = 0\}$ with $|T| = n-m-d$. We show that $B_F := \{1,\ldots,n\} \setminus T$ is the desired basis of $A_F$. The $n \times n$ matrix $\begin{pmatrix} A_F \\ I_T \end{pmatrix}$, where $I_T$ consists of the rows ${e^j}^\top$ for $j \in T$, has the chosen constraints as its rows and is therefore nonsingular. Each row of $I_T$ has a single nonzero entry, so cofactor expansion of the determinant along these rows successively removes them together with the columns indexed by $T$. The columns of $A_F$ indexed by $B_F$ thus form a nonsingular submatrix, and $B_F$ is a basis of $A_F$ with nonbasic index set $T$. Moreover, $x^{B_F} = \hat x$, as both points satisfy the nonsingular system $A_F x = b_F$, $x_j = 0$ for $j \in T$. Since $T \subseteq N$, we have $B \subset B_F$. Note that $B_F$ may be an infeasible basis of $A_F$.

    It remains to verify that $x^{B_F} \notin \Z^n$. If $x^{B_F} \in \Z^n$, then $x^{B_F}_j = \hat x_j \geq 0$ for all $j \in N$, so $x^{B_F} \in P(B) \cap \Z^n \subseteq \operatorname{corner}(B)$, while $\sum_{j \in N} \gamma_j x^{B_F}_j = \sum_{j \in N} \gamma_j \hat x_j < 1$, contradicting the validity of the cut for $\operatorname{corner}(B)$.
\end{proof}

\subsection{Basis-Compatible Families of Corner Cuts} \label{subsec:corner-hered}

We consider cutting-plane procedures of the following general form: for each basis $B$ of $A$ such that $x^B \notin \Z^n$, the procedure generates a (possibly empty) set $\Gamma(B)$ of nonnegative vectors $\gamma = (\gamma_j)_{j \in N}$ that produce valid inequalities for $\operatorname{corner}(B)$, each of the form $\sum_{j \in N} \gamma_j x_j \geq 1$. We call $\Gamma$ a \emph{family of corner cuts}. The procedure defining $\Gamma$ applies to an arbitrary system in standard form. In particular, $\Gamma$ assigns cuts to the bases of $A_F$ for each proper face $F$ of $P$. The \emph{$\Gamma$-closure} of $P$ is
\begin{align}
    P^\Gamma := P \cap \bigcap_{\substack{B \text{ basis of } A \\ x^B \notin \Z^n}} \Bigl\{x \in \R^n : \sum_{j \in N} \gamma_j x_j \geq 1 \ \forall \gamma \in \Gamma(B)\Bigr\}, \label{eq:gamma-closure}
\end{align}
and the $\Gamma$-closure $F^\Gamma$ of a face $F$ is defined analogously, with the bases of $A_F$ in place of the bases of $A$.

We say that a family of corner cuts $\Gamma$ is \emph{basis-compatible} if, for every polyhedron $P$ in standard form and every proper face $F$ of $P$, the following two conditions hold:
\begin{enumerate}[label=(\roman*)]
    \item \emph{(Lifting)} For every basis $B_F$ of $A_F$ with $x^{B_F} \notin \Z^n$ and every $\gamma \in \Gamma(B_F)$, there is a $\gamma' \in \Gamma(B)$, where $B = B_F \setminus D$, such that $\gamma'_j \leq \gamma_j$ for all $j \in N_F$.
    \item \emph{(Restriction)} For every basis $B$ of $A$ with $x^B \notin \Z^n$ and every $\gamma \in \Gamma(B)$ whose cut $\sum_{j \in N} \gamma_j x_j \geq 1$ is not valid for $F$, there exist a basis $B_F$ of $A_F$ with $B \subset B_F$ and $x^{B_F} \notin \Z^n$, and a $\gamma' \in \Gamma(B_F)$ such that $\gamma'_j \leq \gamma_j$ for all $j \in N_F$.
\end{enumerate}
In the lifting condition, $x^B = x^{B_F} \notin \Z^n$ by Lemma~\ref{lemma:basis-correspondence}, so $\Gamma(B)$ is well defined. Informally, the lifting condition states that every cut of the face can be lifted to a cut of the polyhedron that dominates on the face. Likewise, the restriction condition states that every cut of the polyhedron can be restricted to a cut of the face that dominates on the face.

\begin{theorem} \label{thm:corner-cut-hered}
    Let $\Gamma$ be a family of corner cuts and let $F$ be a proper face of $P$.
    \begin{enumerate}[label=(\alph*)]
        \item If $\Gamma$ satisfies the lifting condition, then $P^\Gamma \cap F \subseteq F^\Gamma$.
        \item If $\Gamma$ satisfies the restriction condition, then $F^\Gamma \subseteq P^\Gamma \cap F$.
    \end{enumerate}
    In particular, if $\Gamma$ is basis-compatible, then $F^\Gamma = P^\Gamma \cap F$.
\end{theorem}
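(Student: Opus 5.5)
Both inclusions are checked cut by cut. On $F$ we have $x_j = 0$ for $j \in D$, and Lemma~\ref{lemma:basis-correspondence} identifies the nonbasic columns of $B_F$ with those of $B = B_F \setminus D$ restricted to $N_F$. Hence any cut $\sum_{j \in N} \gamma_j x_j \geq 1$ of $P$ reads $\sum_{j \in N_F} \gamma_j x_j \geq 1$ on $F$, with $N = N_F \cup D$. Throughout, all vectors $\gamma$ are nonnegative and $x \geq 0$ on $F$. Therefore, whenever $\gamma'_j \leq \gamma_j$ for $j \in N_F$, the inequality $\sum_{j \in N_F} \gamma'_j x_j \geq 1$ implies $\sum_{j \in N_F} \gamma_j x_j \geq 1$ on $F$.

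For (a), take $x \in P^\Gamma \cap F$. Since $x \in F$, we must show that $x$ satisfies every cut defining $F^\Gamma$. Fix a basis $B_F$ of $A_F$ with $x^{B_F} \notin \Z^n$ and some $\gamma \in \Gamma(B_F)$. Let $B = B_F \setminus D$, which is a basis of $A$ by the block-triangular argument preceding Lemma~\ref{lemma:basis-correspondence}. That lemma gives $x^B = x^{B_F} \notin \Z^n$, so $\Gamma(B)$ is defined. The lifting condition then provides $\gamma' \in \Gamma(B)$ with $\gamma'_j \leq \gamma_j$ on $N_F$. Since $x \in P^\Gamma$, we have $\sum_{j \in N} \gamma'_j x_j \geq 1$. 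On $F$ the terms with $j \in D$ vanish, so $\sum_{j \in N_F} \gamma'_j x_j \geq 1$. By the monotonicity remark and $x \geq 0$, it follows that $\sum_{j \in N_F} \gamma_j x_j \geq 1$.

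For (b), take $x \in F^\Gamma$. Then $x \in F \subseteq P$, and we must check each cut of $P^\Gamma$, given by a basis $B$ of $A$ with $x^B \notin \Z^n$ and some $\gamma \in \Gamma(B)$. If the cut is valid for $F$, then $x$ satisfies it automatically. Otherwise, the restriction condition provides a basis $B_F \supset B$ of $A_F$ with $x^{B_F} \notin \Z^n$ and $\gamma' \in \Gamma(B_F)$ with $\gamma'_j \leq \gamma_j$ on $N_F$. Because $x \in F^\Gamma$, we get $\sum_{j \in N_F} \gamma'_j x_j \geq 1$, hence $\sum_{j \in N_F} \gamma_j x_j \geq 1$. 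Since the terms with $j \in D$ vanish on $F$, this equals $\sum_{j \in N} \gamma_j x_j \geq 1$.

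The only subtle step is the index bookkeeping. We need $B_F \setminus D = B$, so that the nonbasic set of $B_F$ is $N_F = N \setminus D$. This follows because every basis of $A_F$ contains $D$, and $B \subset B_F$ with $|B_F| = m + d$. Combining (a) and (b) gives the equality for basis-compatible $\Gamma$.
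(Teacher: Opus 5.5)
Your part (a) is correct and is the paper's argument. There $B = B_F\setminus D$ by construction, so $D\subseteq B_F$ forces $B\cap D=\emptyset$ and $N=N_F\cup D$, and the terms indexed by $D$ vanish on $F$.

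Part (b), however, rests on a false bookkeeping claim. You deduce $B_F\setminus D=B$ from $D\subseteq B_F$, $B\subset B_F$ and $|B_F|=m+d$. That deduction silently assumes $B\cap D=\emptyset$, which is not part of the hypotheses: a basis $B$ of $A$ may well contain indices of $D$. In that case $|B\cup D|<m+d$, so $B_F$ also contains indices of $N\setminus D$, and then $N_F\subsetneq N\setminus D$. The variables indexed by $N\setminus(N_F\cup D)$ need not vanish on $F$, so your claimed equality of $\sum_{j\in N_F}\gamma_j x_j$ and $\sum_{j\in N}\gamma_j x_j$ on $F$ fails.

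Here is a concrete instance. Take the polytope of Example~\ref{ex:gfc} with $D=\{2\}$ and the fractional basis $B=\{2,3\}$ of $A$. Its Dantzig cut $x_1+x_4\geq 1$ is not valid for $F$, since it fails at $(\frac{1}{3},0,\frac{10}{3},0)\in F$. Lemma~\ref{lemma:basis-extension} extends it to $B_F=\{1,2,3\}$ with $N_F=\{4\}$. On $F$ the two sums are $x_4$ and $x_1+x_4$, and these differ because $x_1\geq\frac{1}{3}$ on $F$. Your opening assertion, that every cut of $P$ reads $\sum_{j\in N_F}\gamma_j x_j\geq 1$ on $F$ with $N=N_F\cup D$, has the same defect: it holds only for bases of the form $B_F\setminus D$, which is all that (a) needs.

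The repair is immediate and is exactly what the paper does. From $B\subset B_F$ you get $N_F\subseteq N$, so $\gamma\geq 0$ and $x\geq 0$ give
$\sum_{j\in N}\gamma_j x_j\geq\sum_{j\in N_F}\gamma_j x_j\geq\sum_{j\in N_F}\gamma'_j x_j\geq 1$.
No relation among $N$, $N_F$ and $D$ beyond this inclusion is needed in (b). Replace your ``equals'' by this inequality and drop the final paragraph's claim $B_F\setminus D=B$; with that change the proof is complete.
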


\begin{proof}
    (a) Let $y \in P^\Gamma \cap F$, and consider the cut $\sum_{j \in N_F} \gamma_j x_j \geq 1$ generated by some $\gamma \in \Gamma(B_F)$ for a basis $B_F$ of $A_F$ with $x^{B_F} \notin \Z^n$. By the lifting condition, there is a $\gamma' \in \Gamma(B)$, where $B = B_F \setminus D$, with $\gamma'_j \leq \gamma_j$ for all $j \in N_F$, and $y \in P^\Gamma$ satisfies the cut $\sum_{j \in N} \gamma'_j x_j \geq 1$. Since $y \in F$, we have $y_j = 0$ for $j \in D$ and $y \geq 0$, so, using $N = N_F \cup D$,
    \begin{align}
        1 \leq \sum_{j \in N} \gamma'_j y_j = \sum_{j \in N_F} \gamma'_j y_j \leq \sum_{j \in N_F} \gamma_j y_j. \notag
    \end{align}
    Hence $y$ satisfies every cut of $F$, and $y \in F^\Gamma$.

    (b) Since $F^\Gamma \subseteq F \subseteq P$, it suffices to show that every $y \in F^\Gamma$ satisfies the cut $\sum_{j \in N} \gamma_j x_j \geq 1$ for every basis $B$ of $A$ with $x^B \notin \Z^n$ and every $\gamma \in \Gamma(B)$. If the cut is valid for $F$, this is clear. Otherwise, the restriction condition provides a basis $B_F$ of $A_F$ with $B \subset B_F$ and $x^{B_F} \notin \Z^n$, and a $\gamma' \in \Gamma(B_F)$ with $\gamma'_j \leq \gamma_j$ for all $j \in N_F$, and $y$ satisfies the cut $\sum_{j \in N_F} \gamma'_j x_j \geq 1$. Since $N_F \subset N$ and $y \geq 0$,
    \begin{align}
        \sum_{j \in N} \gamma_j y_j \geq \sum_{j \in N_F} \gamma_j y_j \geq \sum_{j \in N_F} \gamma'_j y_j \geq 1. \notag
    \end{align}
\end{proof}

We next classify the procedures of Section~\ref{sec:non-hered} with respect to this framework.

The mixed-integer Chv\'atal and $+$-cut closures fall outside it. The mixed-integer Chv\'atal procedure admits an analog of the Gomory fractional cut through Gomory's mixed-integer inequalities (\cite{gomory1960mixed}), but such cuts may carry negative coefficients. Likewise, rewriting a $+$-cut in the nonbasic variables of a tableau yields negative coefficients from the complemented terms. Neither procedure therefore generates cuts of the form $\sum_{j \in N} \gamma_j x_j \geq 1$ with $\gamma \geq 0$ that, by Lemma~\ref{lemma:corner-valid}, characterizes the nontrivial valid inequalities for $\operatorname{corner}(B)$.

The Gomory fractional and Dantzig cuts, by contrast, fit the framework. The computation of Section~\ref{subsec:counter-gfc} applies verbatim to any $x \in P(B) \cap \Z^n$, so the Gomory fractional cut \eqref{eq:gfc-cut} is valid for $\operatorname{corner}(B)$. The Dantzig cut \eqref{def:dantzig-cut} is likewise valid for $\operatorname{corner}(B)$, since a point $x \in P(B) \cap \Z^n$ with $x_j = 0$ for all $j \in N$ would equal $x^B$. Accordingly, consider a basis $B$ of $A$ with $x^B \notin \Z^n$. Let $\Gamma_{\mathrm{GB}}(B)$ consist of the coefficient vectors $(f_j/f_0)_{j \in N}$ of the Gomory fractional cuts \eqref{eq:gfc-cut} from the rows $i \in B$ with $\bar b_i \notin \Z$, and let $\Gamma_{\mathrm{DB}}(B) := \{\mathbf{1}\}$, where $\mathbf{1} := (1)_{j \in N}$ is the coefficient vector of the Dantzig cut \eqref{def:dantzig-cut}. The two feasible-basis conventions are obtained by discarding the infeasible bases: $\Gamma_{\mathrm{GFB}}(B) := \Gamma_{\mathrm{GB}}(B)$ and $\Gamma_{\mathrm{DFB}}(B) := \Gamma_{\mathrm{DB}}(B)$ if $x^B \geq 0$, and $\Gamma_{\mathrm{GFB}}(B) := \Gamma_{\mathrm{DFB}}(B) := \emptyset$ otherwise. These are all families of corner cuts, and
\begin{align}
    G_{B}(P) = P^{\Gamma_{\mathrm{GB}}}, \qquad G_{FB}(P) = P^{\Gamma_{\mathrm{GFB}}}, \qquad D_{FB}(P) = P^{\Gamma_{\mathrm{DFB}}}. \notag
\end{align}

\begin{lemma} \label{lemma:lifting-taxonomy}
    The families $\Gamma_{\mathrm{GB}}$, $\Gamma_{\mathrm{GFB}}$, and $\Gamma_{\mathrm{DFB}}$ satisfy the lifting condition.
\end{lemma}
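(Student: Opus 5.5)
The plan is to use Lemma~\ref{lemma:basis-correspondence} directly. Fix a basis $B_F$ of $A_F$ with $x^{B_F} \notin \Z^n$, and set $B := B_F \setminus D$, so that $N = N_F \cup D$. That lemma gives $\tilde b_i = \bar b_i$ and $\tilde a_{ij} = \bar a_{ij}$ for $i \in B$ and $j \in N_F$. It also gives $\tilde b_i = 0$ and $\tilde a_{ij} = 0$ for $i \in D$, $j \in N_F$, and $x^{B_F} = x^B$. In particular $x^B \notin \Z^n$, so $\Gamma(B)$ is defined. For each family I will take $\gamma'$ to be the cut generated by $B$ that "corresponds" to $\gamma$, and check that $\gamma'_j = \gamma_j$ for all $j \in N_F$. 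Equality trivially gives the required inequality $\gamma'_j \leq \gamma_j$.

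\textbf{Gomory fractional cuts ($\Gamma_{\mathrm{GB}}$).} Let $\gamma \in \Gamma_{\mathrm{GB}}(B_F)$ come from a row $i \in B_F$ with $\tilde b_i \notin \Z$. Rows $i \in D$ have $\tilde b_i = 0 \in \Z$, so they generate no cuts, and hence $i \in B$. Then $\bar b_i = \tilde b_i \notin \Z$, so row $i$ of the tableau of $B$ generates a cut $\gamma' \in \Gamma_{\mathrm{GB}}(B)$. Its fractional parts are $f_0 = \bar b_i - \floor{\bar b_i} = \tilde b_i - \floor{\tilde b_i}$ and, for $j \in N_F$, $f_j = \bar a_{ij} - \floor{\bar a_{ij}} = \tilde a_{ij} - \floor{\tilde a_{ij}}$. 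Therefore $\gamma'_j = f_j/f_0 = \gamma_j$ on $N_F$. The extra coordinates $\gamma'_j$ for $j \in D$ play no role in the condition.

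\textbf{Feasible-basis conventions.} For $\Gamma_{\mathrm{GFB}}$ and $\Gamma_{\mathrm{DFB}}$, it remains to check feasibility. If $\Gamma(B_F) \neq \emptyset$, then $x^{B_F} \geq 0$. Since $x^B = x^{B_F}$, also $x^B \geq 0$, so $B$ is a feasible basis of $A$ and $\Gamma(B)$ is the full set of cuts.
\begin{itemize}
\item $\Gamma_{\mathrm{GFB}}$ then reduces to the $\Gamma_{\mathrm{GB}}$ argument above.
\item For $\Gamma_{\mathrm{DFB}}$, $\Gamma(B_F) = \{\mathbf{1}_{N_F}\}$ and $\Gamma(B) = \{\mathbf{1}_N\}$. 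Taking $\gamma' = \mathbf{1}_N$ gives $\gamma'_j = 1 = \gamma_j$ for $j \in N_F$.
\end{itemize}

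\textbf{Expected obstacle.} The only delicate point is the bookkeeping that a cut-generating row of $B_F$ must lie in $B$ rather than in $D$. This follows from the fact that $\tilde b_i = 0$ for $i \in D$, and from $x^{B_F} = x^B$ transferring both fractionality and feasibility. Everything else is direct substitution from Lemma~\ref{lemma:basis-correspondence}.
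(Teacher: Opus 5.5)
Your proposal is correct and follows essentially the same route as the paper's proof. Both use Lemma~\ref{lemma:basis-correspondence} to argue that every cut-generating row of $B_F$ lies in $B$ (since $\tilde b_i = 0$ for $i \in D$), that the tableau coefficients on $N_F$ coincide so the corresponding cut of $P$ has equal coefficients there, and that $x^B = x^{B_F}$ carries both fractionality and feasibility over to $B$.
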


\begin{proof}
    Let $B_F$ be a basis of $A_F$ with $x^{B_F} \notin \Z^n$ and let $B = B_F \setminus D$. By Lemma~\ref{lemma:basis-correspondence}, $\tilde b_i = \bar b_i$ and $\tilde a_{ij} = \bar a_{ij}$ for all $i \in B$ and $j \in N_F$, $\tilde b_i = 0$ for $i \in D$, and $x^{B_F} = x^B$. In particular, $x^B \notin \Z^n$, and $x^B$ is feasible whenever $x^{B_F}$ is. Consider first the Gomory fractional cut of $F$ from a row $i$ of $B_F$ with $\tilde b_i \notin \Z$. Then $i \in B$, and the Gomory fractional cut of $P$ from row $i$ of $B$ has the same coefficients $f_j/f_0$ for all $j \in N_F$. For the Dantzig cut of $F$ from $B_F$, the Dantzig cut of $P$ from $B$ has coefficients equal to $1$ on $N_F$. In each case, the cut of $P$ belongs to the corresponding family, and its coefficients on $N_F$ equal those of the cut of $F$.
\end{proof}

By Theorem~\ref{thm:corner-cut-hered}(a), the inclusions 
\begin{align}
    G_{B}(P) \cap F \subseteq G_{B}(F), \qquad G_{FB}(P) \cap F \subseteq G_{FB}(F), \qquad D_{FB}(P) \cap F \subseteq D_{FB}(F) \notag
\end{align}
hold for every proper face $F$. The reverse inclusions are precisely the ones that fail in the examples of Sections~\ref{subsec:counter-gfc} and~\ref{subsec:counter-dantzig}. By Theorem~\ref{thm:corner-cut-hered}(b), none of the three families satisfies the restriction condition.

\subsection{The Dantzig Closure from All Bases} \label{sec:dantzig}

We define the Dantzig closure derived from the cuts \eqref{def:dantzig-cut} associated with all bases,
\begin{align}
    D_{B}(P) := P \cap \Big\{x \in \R^n: \sum_{j \in N} x_j \geq 1 \text{ for all bases } B \text{ of } A \text{ such that } x^{B} \notin \Z^n \Big\}. \notag
\end{align}
In the notation of Section~\ref{subsec:corner-hered}, this closure is the $\Gamma_{\mathrm{DB}}$-closure of $P$.
Since every cut in \eqref{eq:dantzig-feasible-closure} is also generated by $\Gamma_{\mathrm{DB}}$, we have $D_{B}(P) \subseteq D_{FB}(P)$. We now show that the closure $D_{B}(P)$ satisfies the hereditary property.

\begin{lemma} \label{lemma:dantzig-compatible}
    $\Gamma_{\mathrm{DB}}$ is basis-compatible.
\end{lemma}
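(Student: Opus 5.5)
We verify the two conditions in the definition of basis-compatibility separately. In both, $\Gamma_{\mathrm{DB}}$ assigns the single all-ones vector to every basis with fractional basic solution, feasible or not. The only issue is therefore whether the relevant basis exists and has fractional basic solution.

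\emph{Lifting.} Let $B_F$ be a basis of $A_F$ with $x^{B_F} \notin \Z^n$, and let $\gamma = \mathbf{1} \in \Gamma_{\mathrm{DB}}(B_F)$. As shown before Lemma~\ref{lemma:basis-correspondence}, $B := B_F \setminus D$ is a basis of $A$. By Lemma~\ref{lemma:basis-correspondence}, $x^B = x^{B_F} \notin \Z^n$, so $\Gamma_{\mathrm{DB}}(B) = \{\mathbf{1}\}$. Taking $\gamma' = \mathbf{1}$, we get $\gamma'_j = 1 = \gamma_j$ for all $j \in N_F$. This is the same argument as the Dantzig part of Lemma~\ref{lemma:lifting-taxonomy}, with the feasibility bookkeeping dropped.

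\emph{Restriction.} Let $B$ be a basis of $A$ with $x^B \notin \Z^n$, and suppose the Dantzig cut $\sum_{j \in N} x_j \geq 1$ is not valid for $F$. This cut is valid for $\operatorname{corner}(B)$, as noted in Section~\ref{subsec:corner-hered}, and its coefficients $\gamma = \mathbf{1}$ are nonnegative. So the hypotheses of Lemma~\ref{lemma:basis-extension} hold. That lemma gives a basis $B_F$ of $A_F$ with $B \subset B_F$ and $x^{B_F} \notin \Z^n$. Since all bases are admitted, $\Gamma_{\mathrm{DB}}(B_F) = \{\mathbf{1}\}$, and $\gamma' = \mathbf{1}$ satisfies $\gamma'_j = 1 \leq 1 = \gamma_j$ for $j \in N_F$. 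Here $N_F$ is the nonbasic set of $B_F$, a subset of $N$ because $B \subset B_F$.

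The restriction step carries the real content, and it rests entirely on Lemma~\ref{lemma:basis-extension}. That lemma may produce an infeasible basis $B_F$. For $\Gamma_{\mathrm{DFB}}$ this is exactly where the argument breaks, whereas $\Gamma_{\mathrm{DB}}$ accepts such bases. The one thing to check is that every hypothesis of Lemma~\ref{lemma:basis-extension} is met: $\operatorname{corner}(B)$ may be empty, but then any inequality is valid for it, and the lemma's proof uses validity only through the final contradiction.

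Combining the two conditions with Theorem~\ref{thm:corner-cut-hered} then gives $D_B(F) = D_B(P) \cap F$.
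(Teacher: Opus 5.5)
Your proof is correct and follows the paper's argument exactly: lifting via Lemma~\ref{lemma:basis-correspondence}, where $x^B = x^{B_F} \notin \Z^n$ gives $\mathbf{1} \in \Gamma_{\mathrm{DB}}(B)$, and restriction via Lemma~\ref{lemma:basis-extension}, which works because $\Gamma_{\mathrm{DB}}$ accepts the possibly infeasible basis $B_F$ that lemma produces. Your additional checks are sound and only make explicit hypotheses the paper uses implicitly: that the Dantzig cut is valid for $\operatorname{corner}(B)$, that $\gamma \geq 0$, and the case where $\operatorname{corner}(B)$ is empty.
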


\begin{proof}
    For the lifting condition, consider the Dantzig cut $\sum_{j \in N_F} x_j \geq 1$ of $F$ from a basis $B_F$ of $A_F$, and let $B = B_F \setminus D$. By Lemma~\ref{lemma:basis-correspondence}, $x^B = x^{B_F} \notin \Z^n$, so $\Gamma_{\mathrm{DB}}(B)$ contains $\mathbf{1}$, which generates the Dantzig cut $\sum_{j \in N} x_j \geq 1$ with coefficients equal to $1$ on $N_F$. For the restriction condition, consider the Dantzig cut of $P$ from a basis $B$, not valid for $F$. By Lemma~\ref{lemma:basis-extension}, there exists a basis $B_F$ of $A_F$ with $B \subset B_F$ and $x^{B_F} \notin \Z^n$. Then $\Gamma_{\mathrm{DB}}(B_F)$ contains $\mathbf{1}$, which generates the Dantzig cut $\sum_{j \in N_F} x_j \geq 1$ with coefficients again equal to $1$ on $N_F \subset N$.
\end{proof}

\begin{corollary}[Dantzig closure] \label{cor:dantzig-hered}
    Let $P = \{x \in \R_+^n : Ax = b\}$ be a rational polyhedron in standard form and let $F$ be a face of $P$. Then $D_{B}(F) = D_{B}(P) \cap F$.
\end{corollary}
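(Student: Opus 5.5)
The corollary follows from Theorem~\ref{thm:corner-cut-hered} applied to the family $\Gamma_{\mathrm{DB}}$. The plan is to identify both sides of the claimed equality with $\Gamma_{\mathrm{DB}}$-closures, invoke Lemma~\ref{lemma:dantzig-compatible} for basis-compatibility, and handle separately the degenerate case where $F$ is not a proper face.

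First, by definition $D_B(P) = P^{\Gamma_{\mathrm{DB}}}$, where the cuts range over all bases $B$ of $A$ with $x^B \notin \Z^n$. The face $F$ is a polyhedron in standard form $\{x \in \R^n_+ : A_F x = b_F\}$, with $A_F$ of full row rank $m+d$ by Lemma~\ref{lemma:A_F-rank}. Applying the same procedure to this description gives $D_B(F) = F^{\Gamma_{\mathrm{DB}}}$, with cuts indexed by the bases of $A_F$. This matches the convention used throughout Section~\ref{sec:corner-polyhedron}, under which the procedure on a face works through the description $A_F x = b_F$.

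If $F = P$, the statement is trivial, since $D_B(P) \subseteq P$. If $F = \emptyset$, both sides are empty. Otherwise $F$ is a proper nonempty face, and Lemma~\ref{lemma:dantzig-compatible} shows that $\Gamma_{\mathrm{DB}}$ satisfies both the lifting and restriction conditions. Theorem~\ref{thm:corner-cut-hered} then gives $F^{\Gamma_{\mathrm{DB}}} = P^{\Gamma_{\mathrm{DB}}} \cap F$, which is exactly $D_B(F) = D_B(P) \cap F$.

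The only delicate point is whether the closure of $F$ is independent of which standard-form description of $F$ is used. The lemmas of Section~\ref{subsec:basis-extension} are stated for the specific description obtained by appending the rows ${e^j}^\top$, $j \in D$. I would therefore fix that description as the meaning of $D_B(F)$, as the paper implicitly does, and note that adding redundant equations $x_j = 0$ only changes which columns are forced basic, not the cuts on $N_F$.
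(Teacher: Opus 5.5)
Your proposal is correct and follows the paper's proof: you identify $D_B$ with the $\Gamma_{\mathrm{DB}}$-closure, obtain basis-compatibility from Lemma~\ref{lemma:dantzig-compatible}, apply Theorem~\ref{thm:corner-cut-hered}, and treat $F = P$ as trivial. Your separate handling of the empty face and your remark that the closure of $F$ does not depend on its standard-form description are harmless additions that the paper leaves implicit.
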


\begin{proof}
    Immediate from Lemma~\ref{lemma:dantzig-compatible} and Theorem~\ref{thm:corner-cut-hered}, the case $F = P$ being trivial.
\end{proof}

Corollary~\ref{cor:dantzig-hered} and the examples of Section~\ref{sec:non-hered} exhibit an interesting contrast. For the Dantzig procedure, enlarging the family from the feasible bases to all bases brings about the hereditary property. The basis $B_F$ produced by Lemma~\ref{lemma:basis-extension} may be infeasible, in which case $\Gamma_{\mathrm{DFB}}(B_F)$ contains no cut and the restriction condition fails, whereas $\Gamma_{\mathrm{DB}}(B_F)$ always contains the Dantzig cut of $F$ from $B_F$. For the Gomory fractional procedure, no such repair is possible. The example of Section~\ref{subsec:counter-gfc} breaks the hereditary property for the closures derived from feasible bases and from all bases alike.

\section{Concluding Remarks} \label{sec:conclusion}

We have identified two mechanisms that account for the hereditary property in several classical cutting-plane procedures. When a single family of lattice-free convex sets realizes the closure of a convex set and of each of its faces, the property follows from Theorem~\ref{thm:L-closure-hered}, as for the split, lift-and-project, Lov\'asz--Schrijver, Sherali--Adams, and Lasserre closures. When the cuts are instead indexed by the bases of a system in standard form, no such common family is available, and the property follows from the compatibility conditions of Theorem~\ref{thm:corner-cut-hered}, as for the closure of Dantzig cuts derived from all bases. Neither mechanism is universal. The mixed-integer Chv\'atal and $+$-cut closures fall outside both. The closures of Gomory fractional cuts and of Dantzig cuts derived from feasible bases, by contrast, are defined by families of corner cuts that satisfy the lifting condition but not the restriction condition, so exactly one of the two inclusions holds.

The Chv\'atal closure occupies a curious position with respect to these frameworks. It satisfies the hereditary property in the pure integer polyhedral setting (\cite{schrijver1980cutting}), but neither mechanism accounts for this. Chv\'atal inequalities are derived from the inequalities describing the polyhedron, so no common family of lattice-free convex sets is available, and the natural basis-indexed relaxation, the closure of Gomory fractional cuts, is strictly larger in general (\cite{cornuejols2001elementary}) and fails the hereditary property by Example~\ref{ex:gfc}. Whether a structural criterion of the kind developed in this work can account for the Chv\'atal closure remains open, and such a criterion would have to distinguish the pure integer from the mixed-integer setting, where the property fails. It is also natural to ask whether the closure obtained from all nontrivial valid inequalities for $\operatorname{corner}(B)$, over all bases $B$, satisfies the hereditary property.

\bigskip
{\bf Acknowledgments:} The authors thank Fritz Eisenbrand for raising the question addressed in this paper.

\bibliographystyle{plainnat}
\bibliography{references}

\end{document}